\documentclass[12pt]{amsart}

\usepackage{amsmath}
\usepackage{amssymb, amsfonts,amscd,verbatim,hyperref,cleveref,graphics}
\usepackage{xspace}
\usepackage{tikz}
\usetikzlibrary{matrix,calc,decorations.pathmorphing,shapes,arrows}
%\citesort

%\theoremstyle{cupthm}
\newtheorem{thm}{Theorem}[section]
\newtheorem{prop}[thm]{Proposition}
\newtheorem{cor}[thm]{Corollary}
\newtheorem{lemma}[thm]{Lemma}
\numberwithin{equation}{section}

\theoremstyle{definition}

\newtheorem{ex}[thm]{Example}
\newtheorem{quest}[thm]{Question}

\def\NN{{\mathbb N}}
\def\RR{{\mathbb R}}

\DeclareSymbolFont{bbold}{U}{bbold}{m}{n}
\DeclareSymbolFontAlphabet{\mathbbold}{bbold}
\def\one{\mathbbold{1}}

\newcommand{\zs}

\newcommand{\Iff}{if\textcompwordmark f\xspace}

\newcommand{\Cl}{\operatorname{Cl}\nolimits}

\newcommand{\Ran}{\operatorname{Ran}\nolimits}
\newcommand{\Int}{\operatorname{Int}\nolimits}

\newcommand{\term}[1]{{\textit{\textbf{#1}}}}   % To introduce a term
     % A margin note

\newcommand{\goesun}{\xrightarrow{\mathrm{un}}}     % un convergence
	% norm convergence

\begin{document}

\title{Topological aspects of order in $C(X)$}

\author{M.~Kandi\' c}
\address{Fakulteta za Matematiko in Fiziko,
Univerza v Ljubljani,
Jadranska ulica 19,
SI-1000 Ljubljana,
Slovenija }
\email{marko.kandic@fmf.uni-lj.si}

\author{A.~Vavpeti\v c}
\address{Fakulteta za Matematiko in Fiziko,
Univerza v Ljubljani,
Jadranska ulica 19,
SI-1000 Ljubljana,
Slovenija }
\email{ales.vavpetic@fmf.uni-lj.si}

\keywords{vector lattices, continuous functions, separation axioms, bands and projection bands, order continuity, un-convergence}
\subjclass[2010]{Primary: 46B42, 46E25, 54D15. Secondary: 46A40, 54D10, 54G05}

\thanks{This research was supported by the Slovenian Research
Agency grants P1-0222-0101, P1-0292-0101, J1-6721-0101 and J1-7025-0101.}

\begin{abstract}
In this paper we consider the relationship between order and topology in the vector lattice $C_b(X)$ of all bounded continuous functions on a Hausdorff space $X$. We prove that the restriction of $f\in C_b(X)$ to a closed set $A$ induces an order continuous operator \Iff $A=\overline{\Int A}.$ This result enables us to easily characterize bands and projection bands in $C_0(X)$ and $C_b(X)$ through the one-point compactification and the Stone-\v Cech compactification of $X$, respectively. With these characterizations we describe order complete $C_0(X)$ and $C_b(X)$-spaces in terms of extremally disconnected spaces. Our results serve us to solve an open question on lifting un-convergence in the case of $C_0(X)$ and $C_b(X)$.
\end{abstract}

\maketitle

\section{Introduction}

The interaction between order and topology on spaces of continuous functions have been studied extensively in the past. Although closed ideals, bands and projection bands in $C(X)$ where $X$ is compact and Hausdorff have nice characterizations through closed sets of $X$, the lattice $C(X)$ is very rarely order complete. Order completeness of the lattice $C(X)$ is equivalent to the fact that $X$ is extremally disconnected \cite{Meyer-Nieberg:91}.

In \Cref{Section: order density} we consider the operator $\Phi_A:C(X)\to C(A)$ which maps continuous functions on $X$ to their restrictions on $A\subseteq X$. Although this operator is a contraction between $C_b(X)$ and $C_b(A)$, in general it is not order continuous. We prove that whenever $X\in T_{3 \frac 1 2}$ order continuity of $\Phi_A$ is equivalent to $A\subseteq \overline{\Int A}$ (see \Cref{OC restriction}).
%We also provide an example of a topological space $X$ which satisfies weaker separation axioms such that $\Phi_A$ is not order continuous even when $A\subseteq X$ is open.

Although order properties of $C(X)$ where $X$ is a compact Hausdorff space can be found in the main textbooks on vector and Banach lattices, results for $C_b(X)$ where $X$ is ``just" Hausdorff seem to be left out of the main literature. Our second goal is to provide an elementary characterization of closed ideals, bands and projection bands of Banach lattices $C_b(X)$ and $C_0(X)$ through closed sets of $X$ where $X$ is Hausdorff and locally compact, respectively. Main tools we use are order continuity of the restriction operator $\Phi_A$ and the  embeddings of $X$ into $X^+$ or $\beta X$ when $X$ is locally compact Hausdorff space or $T_{3\frac 1 2}$, respectively. There is also a way how to study $C_b(X)$ where $X$ is just Hausdorff. If $X$ is $T_{3 \frac 1 2}$, then bands and projection bands have desired characterizations, and when $X$ is only Hausdorff, desired characterizations can be obtained only for projection bands (see \Cref{counterex}).
%Through one-point compactification of a locally compact Hausdorff space and its topology in \Cref{C_0} we characterize closed ideals, bands and projection bands of $C_0(X)$. In \Cref{C_b} we consider $T_{3 \frac 1 2}$-spaces and through their Stone-\v Cech compactifications we obtain characterizations of bands and projection bands in $C_b(X)$.
%If $X$ is only Hausdorff, then only projection bands in $C_b(X)$ have a nice characterization.
Certain results of \Cref{C_0} and \Cref{C_b} are applied in \Cref{extr oc} to characterize order complete vector lattices $C_0(X)$ and $C_b(X)$. We prove that $C_0(X)$ or $C_b(X)$, when $X$ is locally compact Hausdorff or $T_{3 \frac 1 2}$, respectively, is order complete \Iff $X$ is extremally disconnected. This enables us to provide a vector lattice argument that $X\in T_{3\frac 1 2}$ is extremally disconnected \Iff its Stone-\v Cech compactification $\beta X$ is.

In \Cref{lift un} we consider the so-called problem of ``lifting un-convergence" \cite[Question 4.7]{KMT}. An application of results from \Cref{C_0} and \Cref{C_b} provides a positive answer in the special case of Banach lattices $C_0(X)$ and $C_b(X)$.

\section{Preliminaries}

Let $E$ be a vector lattice. A vector subspace $Y$ of $E$ is an \term {ideal} or an \term{order ideal} whenever $0\leq |x|\leq |y|$ and $y\in Y$ imply $x\in Y$. A \term{band} $B$ is an ideal with the property that whenever $0\leq x_\alpha\nearrow x$ and $x_\alpha \in B$ for each $\alpha$ imply $x\in B$. Here, the notation $x_\alpha\nearrow x$ means that the net $(x_\alpha)$ is increasing towards its supremum $x$. In the literature bands are sometimes referred to as \term{order closed ideals}. For a given set $A\subseteq E$ by $A^d$ we denote the set $\{x\in E:\; |x|\wedge |a|=0 \textrm{ for all }a\in A\}.$ The set $A^d$ is called the \term{disjoint complement} of $A$ in $E$. It turns out that $A^d$ is always a band in $E$.
If a band $B$ satisfies $E=B\oplus B^d$, then $B$ is called a \term{projection band}. If every band in $E$ is a projection band, then
$E$ is said to have the \term{projection property}. A vector lattice is said to be \term{order complete} whenever nonempty bounded by above sets have suprema. It is well-known that order complete vector lattices have the projection property. The spaces $L_p(\mu)$ $(0\leq p\leq \infty)$ are order complete whenever $\mu$ is a $\sigma$-finite measure. When $p\neq 0$ or $\infty$, then $L_p(\mu)$ is always order complete. On the other hand, the lattice $C(X)$ is rarely order complete.
If $(f_\alpha)$ is an increasing net in $C(X)$ that is bounded  by above, then $f=\sup_\alpha f_\alpha$ exists in the vector lattice of all functions on $X$. If $f$ is continuous, then $f_\alpha\nearrow f$ in $C(X)$. A sublattice $Y$ of a vector lattice $E$ is said to be \term{regular} if for every subset $A$ of~$Y$, $\inf A$ is the same in $E$ and in $Y$ whenever $\inf A$ exists in~$Y$. It is well-known (see e.g. \cite{Aliprantis:03}) that every ideal is a regular sublattice.
The following result characterizes regular sublattices (see e.g. \cite[Theorem 1.20]{Aliprantis:03}).

\begin{lemma}\label{regular}
  For a sublattice $Y$ of~$E$ the following statements are equivalent.
  \begin{enumerate}
  \item [(a)] $Y$ is regular.
  \item [(b)] If $\sup A$ exists in $Y$ then $\sup A$ exists in $E$ and the  two suprema are equal.
  \end{enumerate}
\end{lemma}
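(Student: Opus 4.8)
The plan is to exploit the order-reversing symmetry $x \mapsto -x$, which in any vector lattice interchanges suprema and infima. Concretely, the key auxiliary fact is that for a subset $A$ of a vector lattice, $\sup A$ exists if and only if $\inf(-A)$ exists, in which case $\sup A = -\inf(-A)$; and this equivalence holds verbatim whether the relevant supremum or infimum is computed in $Y$ or in $E$. Since $Y$ is a vector subspace, $-A \subseteq Y$ whenever $A \subseteq Y$, so the negation symmetry carries subsets of $Y$ to subsets of $Y$. The whole lemma then reduces to transporting the definition of regularity, which is phrased with infima, across this symmetry into the supremum statement (b).

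For the direction (a) $\Rightarrow$ (b), I would take $A \subseteq Y$ and suppose $s := \sup_Y A$ exists. Applying the symmetry inside $Y$ gives that $\inf_Y(-A)$ exists and equals $-s$. Regularity of $Y$ then yields $\inf_E(-A) = -s$ as well. Applying the symmetry once more, this time in $E$, I obtain $\sup_E A = -\inf_E(-A) = s$, so $\sup A$ exists in $E$ and the two suprema agree, which is exactly (b).

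The direction (b) $\Rightarrow$ (a) is entirely symmetric. Given $A \subseteq Y$ with $t := \inf_Y A$ existing, the symmetry in $Y$ shows $\sup_Y(-A) = -t$, so hypothesis (b) gives $\sup_E(-A) = -t$, and the symmetry in $E$ then yields $\inf_E A = t = \inf_Y A$. Since $A$ was arbitrary, $Y$ is regular in the sense of the definition.

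This is essentially a bookkeeping exercise once the negation symmetry is in hand; the only point deserving genuine care is the auxiliary fact $\sup A = -\inf(-A)$ together with the matching existence of the two extrema, which I would establish once in full generality in an arbitrary vector lattice and then invoke twice, once in $Y$ and once in $E$. Beyond keeping careful track of which lattice each supremum or infimum is computed in, I do not expect any real obstacle.
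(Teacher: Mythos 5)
Your proof is correct, but there is nothing in the paper to compare it against: the paper does not prove this lemma at all, it quotes it as a known characterization of regular sublattices, citing \cite[Theorem 1.20]{Aliprantis:03}. Your argument is in fact the standard textbook route for this equivalence, and it is complete. The auxiliary fact is sound in any ordered vector space: $u$ is an upper bound of $A$ if and only if $-u$ is a lower bound of $-A$, and this bijection between upper bounds of $A$ and lower bounds of $-A$ reverses the order, so $\sup A$ exists precisely when $\inf(-A)$ exists, with $\sup A=-\inf(-A)$; this holds verbatim in $Y$ and in $E$. Since a sublattice is in particular a linear subspace, $A\subseteq Y$ implies $-A\subseteq Y$, so the symmetry stays inside $Y$, and the two implications follow exactly by the bookkeeping you describe, reading the paper's definition of regularity as ``$\inf_Y A$ exists $\Rightarrow$ $\inf_E A$ exists and equals $\inf_Y A$'' (which is the intended reading, and the one your direction (a) $\Rightarrow$ (b) uses). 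The only pedantic point worth adding is that all sets should be taken nonempty, since $\sup\emptyset$ does not exist in a nontrivial vector lattice; this is the usual convention and does not affect the argument.
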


The following proposition directly follows from \Cref{regular}.

\begin{prop}\label{bands in ideals}
Let $Y$ be an ideal in a vector lattice $E$. If $B\subseteq Y$ is a band in $E$, then $B$ is a band in $Y$.
\end{prop}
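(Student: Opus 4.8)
The plan is to verify the two defining properties of a band in $Y$ separately: that $B$ is an ideal in $Y$, and that $B$ is closed under suprema of increasing nets computed in $Y$. The first is immediate, while the second hinges on the regularity of $Y$ as a sublattice of $E$, which is precisely where \Cref{regular} enters.

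First I would check that $B$ is an ideal in $Y$. Given $y\in B$ and $x\in Y$ with $0\leq |x|\leq |y|$, the fact that $B$ is an ideal in $E$ already forces $x\in B$; the inclusion $B\subseteq Y$ plays no role here beyond making the statement meaningful. Thus $B$ is automatically an ideal in the sublattice $Y$.

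The substance of the argument is the order-closedness. Suppose $0\leq x_\alpha\nearrow x$ in $Y$ with every $x_\alpha\in B$, so that $x=\sup_\alpha x_\alpha$ is computed in $Y$. A priori this supremum could differ from a supremum taken in the larger lattice $E$, and bridging this gap is the only genuine point of the proof. Since every ideal is a regular sublattice, $Y$ is regular in $E$, and \Cref{regular}(b) then guarantees that $\sup_\alpha x_\alpha=x$ holds in $E$ as well. Consequently $0\leq x_\alpha\nearrow x$ in $E$, and because $B$ is a band in $E$ with each $x_\alpha\in B$, we conclude $x\in B$.

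I expect no serious obstacle here: once the regularity of $Y$ is invoked to identify the $Y$-supremum with the $E$-supremum, the band property in $E$ transfers verbatim to $Y$. The one subtlety to respect is not to conflate suprema in $Y$ with suprema in $E$ before regularity has been applied, since it is exactly this identification that the ideal hypothesis on $Y$ supplies.
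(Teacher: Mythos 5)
Your proof is correct and follows essentially the same route as the paper: both arguments note that $B$ is automatically an ideal in $Y$, then use the fact that the ideal $Y$ is a regular sublattice together with \Cref{regular} to identify the supremum in $Y$ with the supremum in $E$, and finally invoke the band property of $B$ in $E$. No discrepancies to report.
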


\begin{proof}
Since every band is an ideal, $B$ is an ideal in $E$, and hence $B=B\cap Y$ is an ideal in $Y$.
Suppose now that a net $(x_\alpha)$ satisfies $0\leq x_\alpha\nearrow x$ in $Y$. By \Cref{regular} we have
$x_\alpha\nearrow x$ in $E$. That $x\in B$ follows from the fact that $B$ is a band in $E$. Hence, $B$ is a band in $Y$.
\end{proof}

In this paper we are concerned with algebras/lattices of continuous functions on a topological space $X$. For a set $A$ of $X$ we will denote by $\Int_XA$ and $\Cl_XA$ the interior and the closure of $A$ in $X$, respectively.  If there is no confusion in which space the interior and the closure are taken, we will simply write $\Int A$ and $\overline A$ instead of $\Int_AX$ and $\Cl_XA$, respectively. If $A$ is an open subset of $X$ and $B$ is a subset of $A$, then $\Int_AB=\Int_XB$.
A topological space is said to be \term{completely regular} for every closed set $F$ in $X$ and $x\in X\setminus F$ there exists a continuous function $f\colon X\to [0,1]$ such that $f(x)=1$ and $f|_F\equiv 0.$ If $X$ is completely regular and Hausdorff, we say that $X$ is $T_{3 \frac 1 2}$ and  we write $X\in T_{3\frac 1 2}$ for short. It is well-known that locally compact Hausdorff spaces are $T_{3\frac 1 2}$. A space $X$ is said to be \term{regular} whenever a point and a closed set which does not contain the point can be separated by disjoint neighborhoods. If disjoint closed sets of $X$ can be separated by disjoint neighborhoods, then $X$ is said to be \term{normal}.

If not otherwise stated all functions are assumed to be continuous.
Since real function rings (resp. algebras) $C(X)$, $C_b(X)$ and $C_0(X)$ are also lattices, we introduce the following notation to distinguish between algebraic and lattice notions of ideals. A ring (resp. algebra) ideal is called an \term{$r$-ideal} (resp. an \term{$a$-ideal}) and an order ideal is called an \term{$o$-ideal}. Since all constant functions on $X$ are in $C_b(X)$ (resp. $C(X)$) the class of $r$-ideals in $C_b(X)$ (resp. $C(X)$) coincides with the class of $a$-ideals.
The ideal $J$ in $C(X)$ is called \term{fixed} whenever all functions from $J$ vanish on a nonempty set. The ideals that are not fixed are called \term{free}. Fixed ideals play an important role in the characterization of compact Hausdorff spaces among $T_{3 \frac 1 2}$-spaces. For the proof of the following Theorem see \cite[Theorem 4.11]{Gillman-Jerison:76}.

\begin{thm}\label{maximals compact}
For $X\in T_{3\frac 1 2}$ the following assertions are equivalent. \begin{enumerate}
\item [(a)] $X$ is compact.
\item [(b)] Every $r$-ideal in $C_b(X)$ is fixed.
\item [(c)] Every maximal $r$-ideal in $C_b(X)$ is fixed.
\end{enumerate}
\end{thm}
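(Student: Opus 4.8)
The plan is to establish the cycle of implications (a) $\Rightarrow$ (b) $\Rightarrow$ (c) $\Rightarrow$ (a), reading ``$r$-ideal'' throughout as ``proper $r$-ideal'' (the whole ring contains the constant $\one$, which vanishes nowhere, so it is automatically free and must be excluded for the statement to be meaningful). The implication (b) $\Rightarrow$ (c) is immediate, since a maximal $r$-ideal is in particular an $r$-ideal. For the zero set of $f$ I write $Z(f)=\{x\in X:\,f(x)=0\}$, so that an $r$-ideal $J$ is fixed precisely when $\bigcap_{f\in J}Z(f)\neq\emptyset$.

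For (a) $\Rightarrow$ (b) I would argue by contradiction. Assume $X$ is compact, so that $C_b(X)=C(X)$, and let $J$ be a proper $r$-ideal with $\bigcap_{f\in J}Z(f)=\emptyset$. Then for every $x\in X$ there is $f_x\in J$ with $f_x(x)\neq 0$; replacing $f_x$ by $f_x^2\in J$ (here I use that $J$ absorbs multiplication by $C(X)$) I may assume $f_x\geq 0$ and $f_x(x)>0$, hence $f_x>0$ on an open neighbourhood $U_x$ of $x$. Compactness yields a finite subcover $U_{x_1},\dots,U_{x_n}$, and then $g:=f_{x_1}+\cdots+f_{x_n}\in J$ is strictly positive on all of $X$. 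Being continuous on a compact space, $g$ attains a positive minimum, so $1/g\in C(X)$ and $\one=g\cdot(1/g)\in J$, contradicting properness of $J$. Hence $J$ is fixed.

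For (c) $\Rightarrow$ (a) I would prove the contrapositive, producing a free maximal $r$-ideal whenever $X$ is noncompact. Using the Stone-\v Cech compactification I identify $C_b(X)$ with $C(\beta X)$ via $f\mapsto f^\beta$, the unique continuous extension to $\beta X$. Noncompactness of $X\in T_{3\frac{1}{2}}$ means $\beta X\setminus X\neq\emptyset$; fix $p$ in this set and set $M^p:=\{f\in C_b(X):\,f^\beta(p)=0\}$. The evaluation $f\mapsto f^\beta(p)$ is a surjective algebra homomorphism onto $\RR$ with kernel $M^p$, so $C_b(X)/M^p\cong\RR$ is a field and $M^p$ is maximal. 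To see that $M^p$ is free, suppose some $x\in X$ were a common zero of all members of $M^p$. Since $\beta X$ is compact Hausdorff, hence normal, and $p\neq x$, Urysohn's lemma gives $g\in C(\beta X)$ with $g(p)=0$ and $g(x)=1$; its restriction $f=g|_X$ satisfies $f^\beta=g$, so $f\in M^p$ yet $f(x)=1\neq 0$, a contradiction. Thus $M^p$ is a free maximal $r$-ideal, so (c) fails.

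The main obstacle is the direction (c) $\Rightarrow$ (a): it is exactly here that complete regularity is indispensable, since a merely Hausdorff space need not admit the separating functions nor the embedding into $\beta X$, and the argument rests on the correspondence between maximal ideals of $C_b(X)$ and points of $\beta X$ together with the extension property of $f^\beta$. One could avoid $\beta X$ altogether by phrasing the construction in terms of $z$-ultrafilters on $X$ and invoking that $X$ is compact \Iff every $z$-ultrafilter is fixed, but the $\beta X$ route is shorter given that the compactification is used throughout the paper anyway.
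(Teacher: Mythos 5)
Your proof is correct, but it cannot be compared line-by-line with the paper, because the paper does not prove this theorem at all: it is quoted with a pointer to \cite[Theorem 4.11]{Gillman-Jerison:76}, so the only meaningful comparison is with that cited proof. Relative to it, your route is genuinely different. Gillman and Jerison work entirely inside $X$ with $z$-filters: a proper ideal $J$ (properness is their standing convention, which agrees with your reading) induces the $z$-filter of its zero sets $Z[J]=\{f^{-1}(0):\,f\in J\}$, the implication from compactness is just the finite intersection property, and in the noncompact case a free $z$-ultrafilter yields a free proper ideal which, extended by Zorn's lemma to a maximal ideal, remains free because enlarging an ideal can only shrink its set of common zeros; this last observation also makes the equivalence of (b) and (c) transparent without passing through (a). You instead identify $C_b(X)$ with $C(\beta X)$, take $p\in\beta X\setminus X$ (nonempty exactly when $X$ is noncompact), and set $M^p=\{f:\,f^\beta(p)=0\}$, reading maximality off from $C_b(X)/M^p\cong\RR$ and freeness off from Urysohn's lemma in the normal space $\beta X$; your (a) $\Rightarrow$ (b) is the standard argument that a strictly positive function produced by a finite subcover is invertible. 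What each approach buys: yours is shorter and matches the toolkit of this paper, which leans on $\beta X$ throughout, but it invokes the existence of the Stone--\v Cech compactification, a heavier tool; the Gillman--Jerison argument needs nothing beyond Zorn's lemma and is self-contained inside $X$. Your closing remark that one could instead argue with $z$-ultrafilters is, in essence, exactly the proof the paper cites.
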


Special examples of fixed ideals are the ideals of the form
$$J_F(X):=\{f\in C(X):\; f|_F\equiv 0\}$$ for some subset $F$ of $X$. Obviously $J_F(X)$ is an $a$-ideal and an $o$-ideal which is closed in $C(X)$ in the topology of pointwise convergence. It should be obvious that we always have
$J_F(X)=J_{\overline F}(X).$
For $F\subseteq  X$ we denote the set $\{f\in C_0(X):\; f|_F\equiv 0\}$ by $J_{F}^{0}(X)$.

If $ X$ is a compact Hausdorff space there exists a characterisation
of closed ideals, bands and projective bands in $C( X)$.
We start with a characterization of closed ideals (see e.g. \cite[Proposition 2.1.9]{Meyer-Nieberg:91}).

\begin{thm}\label{CharOfClosId}
For every closed subspace $J$ of $C( X)$ where $ X$ is a compact Hausdorff space the following assertions are equivalent.
\begin{enumerate}
\item [(a)] $J$ is an $r$-ideal.
\item [(b)] $J$ is an $o$-ideal.
\item [(c)] $J=J_F(X)$ for some closed set $F\subseteq  X$.
\end{enumerate}
\end{thm}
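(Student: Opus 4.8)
The plan is to prove the cycle of implications through the closed set $F$ consisting of the common zeros of $J$: I would establish (c)$\Rightarrow$(a) and (c)$\Rightarrow$(b) directly, and then show that each of the hypotheses (a) and (b) forces $J=J_F(X)$ for this $F$. The two implications out of (c) are routine verifications: if $F\subseteq X$ is closed, then $J_F(X)$ is a vector subspace; it is an $r$-ideal because $fg$ vanishes on $F$ whenever $f$ does, and an $o$-ideal because $|h|\le|f|$ together with $f|_F\equiv 0$ forces $h|_F\equiv 0$; and it is norm closed in $C(X)$ since a uniform limit of functions vanishing on $F$ again vanishes on $F$.

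For the two converse implications, set $F:=\bigcap_{f\in J}f^{-1}(0)$. As an intersection of closed sets $F$ is closed, and the inclusion $J\subseteq J_F(X)$ is immediate. The entire content of the theorem is the reverse inclusion $J_F(X)\subseteq J$, and here I would use compactness of $X$ together with the closedness of $J$. Fix $g\in J_F(X)$ and $\varepsilon>0$, and put $K:=\{x\in X:|g(x)|\ge\varepsilon\}$, a compact set disjoint from $F$. By the definition of $F$, for each $x\in K$ there is some $f\in J$ with $f(x)\ne 0$, so that the open sets $\{f\ne 0\}$ cover $K$; compactness then yields a finite subcover.

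In the $r$-ideal case coming from (a), I would note that for such $f$ the square $f^2=f\cdot f$ lies in $J$, is nonnegative, and is strictly positive at $x$. Summing the squares coming from a finite subcover of $K$ produces $h\in J$ with $h\ge 0$ on $X$ and $h\ge\delta>0$ on $K$ for some $\delta$. For $\lambda>0$ the function $g\,h/(h+\lambda)=h\cdot\bigl(g/(h+\lambda)\bigr)$ lies in $J$, and a short estimate---using $|g|<\varepsilon$ off $K$ and $h\ge\delta$ on $K$---shows $\norm{g-g\,h/(h+\lambda)}<\varepsilon$ once $\lambda$ is small enough. Since $J$ is closed and $\varepsilon$ is arbitrary, this gives $g\in J$.

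In the $o$-ideal case coming from (b), I would first reduce to $g\ge 0$ by splitting $g=g^+-g^-$, both parts lying in $J_F(X)$. Since an $o$-ideal is a sublattice, replacing each $f$ above by $|f|$ and taking the finite supremum over a subcover of $K$ produces $h\in J$ with $h\ge 0$ and $h\ge\delta>0$ on $K$. Then $0\le (g-\varepsilon)^+\le(\norm{g}/\delta)\,h\in J$, because $(g-\varepsilon)^+$ is supported where $g>\varepsilon$ and there $h\ge\delta$; the order ideal property gives $(g-\varepsilon)^+\in J$, and since $\norm{g-(g-\varepsilon)^+}\le\varepsilon$, closedness of $J$ again yields $g\in J$. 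The main obstacle throughout is exactly this reverse inclusion: manufacturing from the mere pointwise nonvanishing of $J$ a single element of $J$ that is bounded below on the compact set $K$---which is where compactness enters, through the finite subcover---and then controlling the uniform distance from $g$ to the constructed element of $J$ simultaneously on $K$ and on its complement.
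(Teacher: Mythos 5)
Your proof is correct. The paper never proves this theorem itself---it is quoted from \cite[Proposition 2.1.9]{Meyer-Nieberg:91}---so there is no internal proof to compare against; your argument is exactly the standard one underlying that citation: take the hull $F=\bigcap_{f\in J}f^{-1}(0)$, use compactness of $K=\{x:\,|g(x)|\ge\varepsilon\}$ and a finite subcover to manufacture $h\in J$ with $h\ge\delta>0$ on $K$, and then approximate via $g\,h/(h+\lambda)$ in the ring case (this is the argument behind \cite[Theorem 1.4.6]{Kaniuth:09}, which the paper invokes later for $C_0(X)$) and via $0\le (g-\varepsilon)^+\le(\norm{g}/\delta)\,h$ in the order case (this is Meyer-Nieberg's argument), both completed by norm closedness of $J$.
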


Bands and projection bands are characterized as follows (see e.g. \cite[Corollary 2.1.10]{Meyer-Nieberg:91}).

\begin{thm}\label{CharOfBands}
For a subspace $B$ of $C(X)$ where $ X$ is a compact Hausdorff space the following assertions hold.
\begin{enumerate}
\item [(a)] $B$ is a band \Iff $B=J_F(X)$ for some  set $F\subseteq X$ which is a closure of some open set.
%$B=J_E$ \{f\in C( X)\mid f\mid_E\equiv 0\}$.
\item [(b)] $B$ is a projection band \Iff $B=J_F(X)$ for some clopen set $F\subseteq X$.%\{f\in C( X)\mid f\mid_E\equiv 0\}$.
\end{enumerate}
\end{thm}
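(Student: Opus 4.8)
The plan is to deduce both parts from the characterization of closed $o$-ideals in \Cref{CharOfClosId}, together with a couple of Urysohn-type constructions. The first thing I would record is an elementary lemma: \emph{every band $B$ in $C(X)$ is norm closed}. Indeed, if $f_n\in B$ and $f_n\to f$ uniformly, I may assume $f\ge 0$ (replacing $f_n$ by $|f_n|\in B$), and then for each $k$ the function $(f-\tfrac1k\one)^+$ lies in $B$, because $(f-\tfrac1k\one)^+\le f_n$ as soon as $\norm{f-f_n}<\tfrac1k$ and $B$ is an ideal. Since $0\le (f-\tfrac1k\one)^+\nearrow f$ in $C(X)$ and $B$ is a band, $f\in B$. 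Thus a band is a norm-closed $o$-ideal, so \Cref{CharOfClosId} yields $B=J_F(X)$ for the closed set $F=\{x: f(x)=0 \text{ for all } f\in B\}$. Both parts then reduce to deciding, for closed $F$, when $J_F(X)$ is a band, respectively a projection band; recall also that ``$F$ is the closure of an open set'' is equivalent to $F=\overline{\Int F}$.

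For the ``if'' direction of \textbf{(a)} I would show that $F=\overline{\Int F}$ makes $J_F(X)$ a band. It is an $o$-ideal, so only order-closedness needs checking: given $0\le f_\alpha\nearrow f$ in $C(X)$ with each $f_\alpha$ vanishing on $F$, I claim $f$ vanishes on $\Int F$, hence on $\overline{\Int F}=F$ by continuity. If instead $f(x_1)>0$ for some $x_1\in\Int F$, I pick a Urysohn bump $\psi$ with $\psi(x_1)=1$ supported in $\Int F$; then $f\cdot(\one-\psi)$ is continuous, equals $f\ge f_\alpha$ off $\Int F$ and is $\ge 0=f_\alpha$ on $\Int F$, so it is an upper bound of all $f_\alpha$ strictly below $f$ at $x_1$, contradicting $f=\sup_\alpha f_\alpha$.

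The converse in \textbf{(a)} is the main obstacle. Writing $B=J_F$ with $F$ closed, I must show the band axiom forces $F=\overline{\Int F}$. Suppose not; since always $\overline{\Int F}\subseteq F$, choose $x_0\in F\setminus\overline{\Int F}$, and note $x_0\notin\Int F$ gives $x_0\in\overline{X\setminus F}$. By normality of the compact Hausdorff space $X$, take $g\in C(X)$ with $0\le g\le 1$, $g(x_0)=1$ and $g|_{\overline{\Int F}}=0$. The crux is to realize $g$ as a supremum in $C(X)$ of elements of $J_F$: I set $\mathcal F=\{f\in C(X): 0\le f\le g,\ f|_F=0\}$, which is upward directed, and prove $\sup\mathcal F=g$. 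For a point $y\notin F$ choose (Urysohn) $\phi$ with $\phi(y)=1$, $\phi|_F=0$; then $(g-\varepsilon\one)^+\wedge(c\phi)\in\mathcal F$ takes the value $g(y)-\varepsilon$ at $y$ for large $c$. Hence any continuous upper bound of $\mathcal F$ dominates $g$ on $\overline{X\setminus F}$, while on $\Int F$ it dominates $g=0$ trivially; as $X=(X\setminus F)\cup\Int F\cup\partial F$ with $\partial F\subseteq\overline{X\setminus F}$, the bound is $\ge g$ everywhere, so $\sup\mathcal F=g$. Thus $0\le f_\alpha\nearrow g$ along $\mathcal F$ with $f_\alpha\in J_F=B$, yet $g(x_0)=1$ gives $g\notin B$, contradicting that $B$ is a band. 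Hence $F=\overline{\Int F}$.

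For \textbf{(b)}, a projection band is in particular a band, so by \textbf{(a)} $B=J_F$ with $F=\overline{\Int F}$, and I must determine when $F$ is clopen. First compute the disjoint complement: since $f\wedge g=0$ in $C(X)$ means $\{f\ne 0\}\cap\{g\ne 0\}=\emptyset$, and every point of $X\setminus F$ is a nonvanishing point of some $f\in J_F$, one obtains $B^d=J_{\overline{X\setminus F}}=J_{X\setminus\Int F}$. If $B$ is a projection band, then $\one=f_1+f_2$ with $f_1\in B$, $f_2\in B^d$; here $f_2=1$ on $F$ (as $f_1|_F=0$) while $f_2=0$ on $X\setminus\Int F$, and continuity across $\partial F=F\cap(X\setminus\Int F)$ forces $\partial F=\emptyset$, i.e.\ $F$ is clopen. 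Conversely, if $F$ is clopen then the characteristic functions $\mathbf 1_F,\mathbf 1_{X\setminus F}$ are continuous and the decomposition $h=h\,\mathbf 1_{X\setminus F}+h\,\mathbf 1_F$ exhibits $C(X)=J_F\oplus J_F^d$, so $J_F$ is a projection band. I expect the two supremum computations in \textbf{(a)} — especially verifying $\sup\mathcal F=g$ — to be the only genuinely delicate points; the rest is bookkeeping with closures, interiors and the ideal property.
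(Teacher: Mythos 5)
Your proposal is correct --- every step checks out: the preliminary lemma that bands in $C(X)$ are norm closed, the reduction via \Cref{CharOfClosId} to ideals of the form $J_F(X)$ with $F$ closed, the bump-function argument for the ``if'' half of (a), the verification that $\sup\mathcal F=g$ for the ``only if'' half, the identification $J_F(X)^d=J_{\overline{X\setminus F}}(X)=J_{X\setminus\Int F}(X)$, and both halves of (b). There is, however, no proof in the paper to compare against: \Cref{CharOfBands} is quoted as known background, with a citation to \cite[Corollary 2.1.10]{Meyer-Nieberg:91}.

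What your write-up does show is that the result can be made self-contained using exactly the tools the paper develops later for its noncompact generalizations. Your directed family $\mathcal F$ with $\sup\mathcal F=g$ --- proved by showing that any continuous upper bound dominates $g$ on $\overline{X\setminus F}$ and is nonnegative (hence $\ge g=0$) on $\Int F$ --- is the same device as the family $\mathcal M$, with its Claims 1 and 2, in the proof of \Cref{OC restriction}. The formula $J_F(X)^d=J_{\overline{X\setminus F}}(X)$ that you use in (b) is stated for $T_{3\frac 1 2}$ spaces at the start of \Cref{Section: order density}. Your Urysohn-bump argument for the ``if'' half of (a) is the compact case of the argument the paper runs through \Cref{supst na odprto} in the second half of the proof of \Cref{bands in Tychonoff}(a), and your characteristic-function decomposition in (b) is the one in the proof of \Cref{bands in c_0}(b). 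Two minor polish points, neither affecting correctness: at a point of $\partial F$ your $f_2$ would have to equal both $1$ and $0$, so no appeal to ``continuity across $\partial F$'' is needed --- the contradiction is pointwise; and in the converse half of (b) you should note explicitly that a clopen $F$ satisfies $F=\overline{\Int F}$, so that $J_F(X)$ is a band by (a) (being a band is part of the definition of a projection band), before the decomposition $h=h\chi_{X\setminus F}+h\chi_F$, with $h\chi_F\in J_{X\setminus F}(X)=J_F(X)^d$, exhibits the projection.
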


\begin{cor}
If $X$ is a locally compact noncompact Hausdorff space, then $C_0(X)$ is not a band in $C(X^+)$.
\end{cor}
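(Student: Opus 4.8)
The plan is to realize $C_0(X)$ as a fixed ideal of the form $J_F(X^+)$ in $C(X^+)$ and then decide the band question with \Cref{CharOfBands}. Since $X$ is locally compact Hausdorff, its one-point compactification $X^+=X\cup\{\infty\}$ is compact Hausdorff, so \Cref{CharOfClosId} and \Cref{CharOfBands} apply to $C(X^+)$. First I would record the standard identification $C_0(X)=J_{\{\infty\}}(X^+)=\{g\in C(X^+):g(\infty)=0\}$: extending $f\in C_0(X)$ by $f(\infty)=0$ produces a continuous function on $X^+$ exactly because the condition $f\in C_0(X)$ says $|f|<\varepsilon$ off a compact subset of $X$, which is continuity at $\infty$; conversely the restriction of any $g\in C(X^+)$ with $g(\infty)=0$ lies in $C_0(X)$. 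Thus the statement reduces to showing that $J_{\{\infty\}}(X^+)$ is not a band in $C(X^+)$.

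By \Cref{CharOfBands}(a), and since $\{\infty\}$ is closed while every closure of an open set is closed, $J_{\{\infty\}}(X^+)$ is a band \Iff $\{\infty\}$ is itself the closure of some open subset of $X^+$. So the crux is to rule this out under noncompactness. If $U\subseteq X^+$ is open with $\overline U=\{\infty\}$, then $U\subseteq\{\infty\}$, hence $U=\emptyset$ or $U=\{\infty\}$; the first gives $\overline U=\emptyset$, so necessarily $U=\{\infty\}$ is open. But a neighborhood base at $\infty$ consists of the sets $\{\infty\}\cup(X\setminus K)$ with $K\subseteq X$ compact, so $\{\infty\}$ is open \Iff $X\setminus K=\emptyset$ for some compact $K$, that is, \Iff $X$ is compact. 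As $X$ is assumed noncompact, $\{\infty\}$ is not open, hence not a closure of an open set, and therefore $J_{\{\infty\}}(X^+)=C_0(X)$ is not a band in $C(X^+)$.

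There is no real obstacle here; the only point needing a little care is the reduction of the topological condition ``$\{\infty\}$ is a closure of an open set'' to ``$X$ is compact'', which is precisely where the hypothesis of noncompactness is used (indeed for compact $X$ the point $\infty$ is isolated in $X^+$ and $C_0(X)$ is even a projection band). As an independent confirmation I would exhibit the failure of order closedness directly: the upward-directed family $\{f\in C_0(X):0\le f\le 1\}$ has pointwise supremum equal to $\one_X$, and using that $X$ is dense in $X^+$ one checks that its supremum in $C(X^+)$ is the constant function $\one$, which does not vanish at $\infty$; this gives an increasing net in $C_0(X)$ whose supremum in $C(X^+)$ escapes $C_0(X)$.
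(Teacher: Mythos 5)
Your proof is correct and takes essentially the same approach the paper intends: the corollary is stated there without proof as an immediate consequence of \Cref{CharOfBands}, via the identification $C_0(X)\cong J_{\{\infty\}}(X^+)$ and the observation that $\{\infty\}$ cannot be the closure of an open subset of $X^+$ when $X$ is noncompact (equivalently, $X$ is dense in $X^+$). Your closing direct argument exhibiting an increasing net in $C_0(X)$ with supremum $\one$ in $C(X^+)$ is a valid independent verification, but it is not needed once \Cref{CharOfBands} is invoked.
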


A closed set which is a closure of some open set is the closure of its interior. Indeed, if $F=\overline{U}$ where $U$ is an open set, then $U\subseteq \Int F$, so that $F=\overline{U}\subseteq \overline{\Int F}\subseteq F$. Thus, a closed ideal $J$ in $C(X)$ where $X$ is a compact Hausdorff space is a band \Iff  $J=J_F(X)$ for some closed set $F$ in $X$ with $F=\overline{\Int F}$.

For unexplained facts about vector lattices and operators acting on them we refer the reader to \cite{Luxemburg:71}.

\section{Order density and completely regular spaces}\label{Section: order density}

%Throughout this section we assume that $X$ is a \term{completely regular space}, i.e., for every closed set $F$ in $X$ and $x\in X\setminus F$ there exists a continuous function $f\colon X\to [0,1]$ such that $f(x)=1$ and $f|_F\equiv 0.$ We will write $X\in T_{3\frac 1 2}$ for short.
Complete regularity of the space basically means that we can separate points from closed sets by continuous functions. If we can separate only points by continuous functions, i.e., for different points $x,y\in X$ there is a continuous function $f\colon X\to [0,1]$ with $f(x)=0$ and $f(y)=1$, then $X$ is called \term{functionally Hausdorff}.

If $X$ is not functionally Hausdorff, then there exist $x,y\in X$ such that $f(x)=f(y)$ for all $f\in C(X)$. In this case we have  $J_{\{x\}}(X)=J_{\{y\}}(X)$, so that the mapping $x\mapsto J_{\{x\}}(X)$ is not one-to-one. If $X\in T_{3\frac 1 2}$, then for closed sets $F$ and $G$ in $X$ we have $J_F(X)=J_G(X)$ \Iff $F=G$. Indeed, if $F\neq G$, then there exists $x\in X$ such that $x\in F\setminus G$ or $x\in G\setminus F$. Without any loss of generality we can assume that the former happens. Then one can find $f:X\to [0,1]$ with $f(x)=1$ and $f\equiv 0$ on $G$. Then $f\in J_G(X)=J_F(X)$ implies $f(x)=0$ which is a contradiction.

Also, when $F$ is a closed set in $X\in T_{3\frac 1 2}$, the disjoint complement of $J_F(X)$ satisfies $J_F(X)^d=J_{\overline{X\setminus F}}(X).$ Indeed, if $f\in J_{\overline{X\setminus F}}(X)$, then $f\equiv 0$ on $X\setminus F$, so that $|f|\wedge |g|=0$ for all $g\in J_F(X)$. Hence $f\in J_F(X)^d.$ If $f\notin J_{\overline{X\setminus F}}(X)$, there is
$x\in X\setminus F$ such that $f(x)\neq 0$. Take any $g\in C_b(X)$ with $g(x)=1$ and $g\equiv 0$ on $F$. Then $g\in J_F(X)$ and $|f|\wedge |g|\neq 0$, i.e., $f\notin J_F(X)^d.$

In this section we are interested in the so-called ``restriction operator" defined as follows. If $A$ is a subset of $X$, then the restriction $f|_A$ of $f\in C(X)$ is continuous on $A$. The mapping $\Phi_A(f)= f|_A$ is called the \term{restriction operator}.
Also, if $f\in C_b(X)$, then $f|_A\in C_b(A)$ and the restriction operator is a contractive lattice homomorphism with respect to the $\|\cdot\|_\infty$ norms on $C_b(X)$ and $C_b(A)$.

If $X$ is normal and $A$ is closed in $X$, then the restriction operator is always surjective. The details follow.

\begin{thm}\label{Tietze and Urysohn}
The following assertions are equivalent.
\begin{enumerate}
\item [(a)] $X$ is normal.
\item [(b)] For any closed subset $A$ in $X$ the restriction operator $\Phi_A\colon C_b(X)\to C_b(A)$ is surjective.
\item [(c)] For any disjoint closed sets $A$ and $B$ in $X$ there is    $f\in C_b(X)$ such that $f|_A\equiv 1$ and $f|_B\equiv 0.$
\end{enumerate}
\end{thm}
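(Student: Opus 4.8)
The plan is to prove the cyclic chain $(a)\Rightarrow(b)\Rightarrow(c)\Rightarrow(a)$, recognizing the statement as a repackaging of the Tietze extension theorem together with Urysohn's Lemma. Two of the three implications are routine, so I would dispatch them first and reserve the effort for the extension step.

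For $(b)\Rightarrow(c)$: given disjoint closed sets $A$ and $B$, their union $C=A\cup B$ is closed, and within the subspace $C$ each of $A$ and $B$ is clopen (being the complement of the other closed set). Hence the function on $C$ equal to $1$ on $A$ and $0$ on $B$ is continuous and bounded, and by $(b)$ it extends to some $f\in C_b(X)$, which is exactly what $(c)$ demands. For $(c)\Rightarrow(a)$: given disjoint closed $A$ and $B$, take the separating function $f$ furnished by $(c)$ and set $U=f^{-1}((1/2,\infty))$ and $V=f^{-1}((-\infty,1/2))$; these are disjoint open sets containing $A$ and $B$ respectively, so $X$ is normal.

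The substantial step is $(a)\Rightarrow(b)$, the Tietze extension theorem, and this is where I expect the real work. Given $g\in C_b(A)$, after rescaling I may assume that $g$ takes values in $[-1,1]$. The engine is a Urysohn-type separation drawn from normality: the disjoint sets $g^{-1}([1/3,1])$ and $g^{-1}([-1,-1/3])$ are closed in $A$, hence closed in $X$ since $A$ is closed, so they can be separated by a continuous $h_1\colon X\to[-1/3,1/3]$ taking the values $1/3$ and $-1/3$ on them. A short case check on the three pieces of $A$ then yields $\norm{(g-h_1)|_A}_\infty\le 2/3$ while $\norm{h_1}_\infty\le 1/3$. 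Applying the same construction to the successive remainders produces continuous $h_n$ on $X$ with $\norm{h_n}_\infty\le\tfrac13\bigl(\tfrac23\bigr)^{n-1}$ and with $\norm{(g-\sum_{k=1}^n h_k)|_A}_\infty\le(2/3)^n$. The Weierstrass $M$-test then shows that $f=\sum_n h_n$ converges uniformly to a function in $C_b(X)$, and the error bound $(2/3)^n\to 0$ forces $f|_A=g$.

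The main obstacle is the bookkeeping in this successive-approximation argument: one must track the geometric decay of $\norm{h_n}_\infty$ carefully enough to guarantee both uniform convergence of the series and equality of the limit with $g$ on $A$. Everything else---continuity and boundedness of the uniform limit, and the existence of each $h_n$---follows from standard facts about uniform convergence and from normality. I note that the separation supplying each $h_n$ is itself Urysohn's Lemma, which amounts to the implication $(a)\Rightarrow(c)$ specialized to the relevant closed sets, so invoking it inside the proof of $(a)\Rightarrow(b)$ introduces no circularity.
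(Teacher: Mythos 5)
Your proof is correct, but note that the paper does not actually prove this theorem: it states it without proof and merely remarks afterwards that (b) and (c) are precisely the Tietze extension theorem and Urysohn's lemma, treating the equivalence as classical. What you have done differently is reconstruct the standard textbook argument in full: the two easy implications, $(b)\Rightarrow(c)$ (extending the function that is $1$ on $A$ and $0$ on $B$, these being clopen pieces of the closed set $A\cup B$) and $(c)\Rightarrow(a)$ (pulling back disjoint open half-lines around $1$ and $0$), together with the successive-approximation derivation of Tietze from Urysohn for $(a)\Rightarrow(b)$, where your bookkeeping $\norm{h_n}_\infty\le\tfrac13\bigl(\tfrac23\bigr)^{n-1}$ and $\norm{(g-\sum_{k=1}^n h_k)|_A}_\infty\le\bigl(\tfrac23\bigr)^n$ is exactly right for both uniform convergence of $\sum_n h_n$ and equality with $g$ on $A$. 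Your remark on circularity is also sound: Urysohn's lemma is proved from normality alone (via the dyadic-rational construction), so invoking it inside $(a)\Rightarrow(b)$ is legitimate even though it is literally the implication $(a)\Rightarrow(c)$; it remains the one ingredient your argument treats as a black box. The trade-off is the expected one: the paper's citation keeps its exposition short and leans on the literature, while your route makes the equivalence essentially self-contained, at the cost that genuine self-containment would additionally require reproducing the dyadic construction behind Urysohn's lemma.
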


It should be noted that (b) and (c) are precisely the statements of Tietze extension theorem and Urysohn's lemma, respectively.

If $f_\alpha\nearrow f$ in $C(X)$, then for each set $A\subseteq X$ we have
$f_\alpha|_A\nearrow$ and $f_\alpha|_A\leq f|_A$ for each $\alpha$.
Pick $\lambda\in\mathbb R$. If $f_\alpha|_A\equiv \lambda$ for each $\alpha$, then $f|_A\equiv \lambda$.
In general we cannot expect $f_\alpha|_A\nearrow f|_A$, since $\Phi_A$ is not order continuous.
We will prove that for a closed set $A\subseteq X$ the operator $\Phi_A\colon C(X)\to C(A)$ is order continuous \Iff $A=\overline{\Int A}$ (see \Cref{OC restriction}). To prove this result we first need the following lemma which deals with a special case.

\begin{prop}\label{supst na odprto}
Let $X\in T_{3\frac 1 2}$ and $A\subseteq X$ open subset.
\begin{enumerate}
\item [(a)] If $f_\alpha\nearrow f$ in $C(X)$, then $f_\alpha|_{A}\nearrow f|_{A}$ in $C(A)$.
\item [(b)] If $f_\alpha\nearrow f$ in $C_b(X)$, then $f_\alpha|_{A}\nearrow f|_{A}$ in $C_b(A)$.
\end{enumerate}
\end{prop}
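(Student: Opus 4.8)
The restriction $f|_A$ is automatically an upper bound of the net $\bigl(f_\alpha|_A\bigr)$ in $C(A)$: since $f=\sup_\alpha f_\alpha$ in $C(X)$ is in particular a pointwise upper bound, we have $f_\alpha|_A\le f|_A$ for every $\alpha$, and the net $\bigl(f_\alpha|_A\bigr)$ is increasing. Hence the real content of the statement is that $f|_A$ is the \emph{least} upper bound in $C(A)$; once this is established, $f_\alpha|_A\nearrow f|_A$ follows at once. So the plan is to fix an arbitrary upper bound $h\in C(A)$ of $\bigl(f_\alpha|_A\bigr)$ and to prove $f|_A\le h$.

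I would argue by contradiction, assuming $h(x_0)<f(x_0)$ for some $x_0\in A$ and setting $\delta=f(x_0)-h(x_0)>0$. Since $f|_A-h$ is continuous on $A$ and positive at $x_0$, the set $U=\{x\in A:\ f(x)-h(x)>\delta/2\}$ is a neighborhood of $x_0$ open in $A$, and because $A$ is open in $X$ it is open in $X$ as well. On $U$ the upper-bound hypothesis $f_\alpha|_A\le h$ yields $f_\alpha<f-\delta/2$ for every $\alpha$.

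The key step, and the point where the hypothesis $X\in T_{3\frac{1}{2}}$ enters, is to turn $h$ — which lives only on the open set $A$ — into a global competitor for the supremum \emph{without} extending $h$ itself; extension results such as Tietze's theorem (cf. \Cref{Tietze and Urysohn}) are unavailable here, since $A$ is open and $X$ need not be normal. By complete regularity I would choose $\phi\colon X\to[0,1]$ continuous with $\phi(x_0)=1$ and $\phi\equiv 0$ on $X\setminus U$, and set $\tilde h=f-\tfrac{\delta}{2}\,\phi$. Off $U$ one has $\tilde h=f\ge f_\alpha$, while on $U$ one has $\tilde h\ge f-\delta/2>f_\alpha$; thus $\tilde h$ is a continuous upper bound of $(f_\alpha)$ on $X$ with $\tilde h(x_0)=f(x_0)-\delta/2<f(x_0)$. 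This contradicts $f=\sup_\alpha f_\alpha$ in $C(X)$, which would force $\tilde h\ge f$ everywhere. Hence no such $x_0$ exists, $f|_A\le h$, and part (a) follows.

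For part (b) I would run the identical argument inside $C_b(X)$ and $C_b(A)$: the bump $\phi$ is bounded and $f\in C_b(X)$, so the competitor $\tilde h=f-\tfrac{\delta}{2}\,\phi$ again lies in $C_b(X)$ and contradicts $f=\sup_\alpha f_\alpha$ taken in $C_b(X)$. The main obstacle throughout is exactly the asymmetry that $h$ is defined only on $A$; the device of perturbing the global function $f$ downward by a complete-regularity bump supported in $U$, rather than attempting to extend $h$, is what circumvents it.
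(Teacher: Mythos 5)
Your proof is correct, and while it shares the paper's overall strategy---argue by contradiction, and use a complete-regularity bump supported in the open set where the would-be upper bound sits strictly below $f$ to manufacture a global continuous upper bound of $(f_\alpha)$ that fails to dominate $f$---the competitor you build is genuinely different, and the difference simplifies the argument. The paper takes an upper bound $g$ on $A$ with $g<f|_A$, uses regularity of $X$ to shrink to an open set $U$ with $\Cl_X U\subseteq A$ on which $g<f$, and forms the interpolation $h=\lambda g+(1-\lambda)f$; since this competitor uses the values of $g$, which live only on $A$, the paper must define $\lambda g$ by cases and verify its continuity via the open cover $\{A,\,X\setminus \Cl_X U\}$. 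Your competitor $\tilde h=f-\tfrac{\delta}{2}\phi$ never references the values of the partial upper bound at all---only the quantitative gap $\delta/2$ on $U$---so it is globally defined and continuous for free; the pasting step and the appeal to regularity (the containment $\Cl_X U\subseteq A$) both become unnecessary, openness of $U$ in $X$ being all you use. There is a second divergence in part (b): the paper deduces (b) from (a) by invoking that $C_b(X)$ is an ideal in $C(X)$ together with \Cref{regular}, whereas you rerun the same perturbation entirely inside $C_b(X)$, observing that $\tilde h$ is bounded; this keeps (b) self-contained and avoids the regular-sublattice lemma. Both routes are sound; yours is slightly more elementary and arguably cleaner.
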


\begin{proof}
(a) Let $f_\alpha\nearrow f$ in $C(X)$.
Suppose there exists $g\colon A\to\RR$ such that $f_\alpha|_A \le g<f|_A$ for all $\alpha$.
There exists $x_0\in A$ such that $g(x_0)< f(x_0)$ and because $f$ and $g$ are continuous and $X$ is regular, there exists
an open set $U\subseteq \Cl_X(U)\subseteq A$ such that $g(x)<f(x)$ for all $x\in U$. Because $X\in T_{3\frac 1 2}$
there exists $\lambda\colon X\to [0,1]$ such that $\lambda(x_0)=1$ and $\lambda(X\setminus U)=0$.
We define a function $\lambda g\colon X\to \RR$ as
$$(\lambda g)(x)=\left\{\begin{array}{ccl}
 \lambda(x)g(x)& : &x\in A\\
 0& : &x\in  X\setminus \Cl_XU
 \end{array}\right..$$
Since $\{A,X\setminus \Cl_X(U)\}$ is an open cover of $X$ and $\lambda\equiv 0$ on $X\setminus \Cl_XU$ we conclude that $\lambda g$ is continuous on $X$.
%We may considered $\lambda\cdot g\colon X\to \RR$ as a continuous function, because
%$(\lambda\cdot g)|_{A}$ is a product of two continuous functions,
%$(\lambda\cdot g)|_{(\Cl_X(U))^C}\equiv 0$, and $\{A,(\Cl_X(U))^C\}$ is an open cover of $X$.
Then $h\colon X\to \RR$ defined by $h(x)=(\lambda g)(x)+(1-\lambda(x))f(x)$ is continuous
and $f_\alpha\le h < f$ for all $\alpha$. This is a contradiction as $f_\alpha\nearrow f$.

(b) Suppose $f_\alpha\nearrow f$ in $C_b(X)$. Since $C_b(X)$ is an ideal in $C(X)$, \Cref{regular} implies $f_\alpha\nearrow f$ in $C(X)$, so that by (a) we have $f_\alpha|_A\nearrow f|_A$ in $C(X)$. If $f_\alpha|_A\leq g\leq f|_A$ for some $g\in C_b(A)$, then $f_\alpha|_A\nearrow f|_A$ in $C(X)$ implies $g=f|_A$, and since $f|_A\in C_b(A)$ we have $f_\alpha|_A\nearrow f|_A$ in $C_b(A)$.
\end{proof}

In \Cref{Spust na TFC} we will see that there is an example of a functionally Hausdorff space $X$ such that for every open set $A$ in $X$ the restriction operator $\Phi_A:C(X)\to C(A)$ is order continuous.

\begin{thm}\label{OC restriction}
%Let $X\in T_{3\frac 1 2}$ and $A\subseteq X$ closed subset. The following is equivalent:
%\begin{enumerate}
%\item $\overline{\Int A}=A$.
%\item If $f_\alpha\nearrow f$ in $C(X)$, then $f_\alpha|_{A}\nearrow f|_{A}$ in $C(A)$.
%\end{enumerate}
%
%
%{\bf Opomba}: Lahko doka\v zeva slede\v ce (\v ce je sploh zanimivo):
Let $X\in T_{3\frac 1 2}$ and $A\subseteq X$ a closed set. The following is equivalent:
\begin{enumerate}
\item [(a)] $A= \overline{\Int A}$.
\item [(b)] If $f_\alpha\nearrow f$ in $C(X)$, then $f_\alpha|_{A}\nearrow f|_{A}$ in $C(A)$.
\item [(c)] If $f_\alpha\nearrow f$ in $C_b(X)$, then $f_\alpha|_{A}\nearrow f|_{A}$ in $C_b(A)$.
\end{enumerate}
\end{thm}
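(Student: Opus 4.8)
The plan is to establish the cycle (a) $\Rightarrow$ (b) $\Rightarrow$ (c) $\Rightarrow$ (a); the first two implications are short, and essentially all the work sits in the construction needed for (c) $\Rightarrow$ (a). For (a) $\Rightarrow$ (b) I would set $U=\Int A$, an open set with $\overline U=A$, and feed $f_\alpha\nearrow f$ into \Cref{supst na odprto}(a) to get $f_\alpha|_U\nearrow f|_U$ in $C(U)$. To upgrade this to $A$, let $g\in C(A)$ be any upper bound of the net $(f_\alpha|_A)$; replacing $g$ by $g\wedge f|_A$ I may assume $f_\alpha|_A\le g\le f|_A$. Restricting to $U$ and using that $f|_U$ is the supremum of $(f_\alpha|_U)$ in $C(U)$ forces $g|_U=f|_U$. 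Since $U$ is dense in $A$ (because $A=\overline U$) and $g,f|_A$ are continuous, $g=f|_A$ on $A$; hence $f|_A$ is the least upper bound and $f_\alpha|_A\nearrow f|_A$.

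For (b) $\Rightarrow$ (c) I would use that $C_b(X)$ is an ideal, hence a regular sublattice, of $C(X)$: by \Cref{regular}, $f_\alpha\nearrow f$ in $C_b(X)$ already gives $f_\alpha\nearrow f$ in $C(X)$, so (b) yields $f_\alpha|_A\nearrow f|_A$ in $C(A)$. As $f|_A\in C_b(A)$ and every upper bound in $C_b(A)$ is also one in $C(A)$, the supremum computed in the ideal $C_b(A)$ is again $f|_A$, which is exactly (c).

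The heart of the argument is (c) $\Rightarrow$ (a), which I would prove contrapositively: assuming $A\neq\overline{\Int A}$ I will exhibit an increasing net in $C_b(X)$ whose restriction to $A$ fails to converge to the restriction of its supremum. Since $A$ is closed, $\overline{\Int A}\subseteq A$, so I may fix $x_0\in A\setminus\overline{\Int A}$. Because $X$ is regular and $\overline{\Int A}$ is closed, I choose a closed neighborhood $N$ of $x_0$ with $N\cap\overline{\Int A}=\emptyset$, and set $K=N\cap A$; then $K$ is closed, contains $x_0$ in its relative interior in $A$, and $K\cap\Int A=\emptyset$, so $K$ has empty interior in $X$ and $X\setminus K$ is dense. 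Now I take as directed set $\Lambda=\{\varphi\in C(X,[0,1]):\varphi|_K=0\}$, ordered pointwise (upward directed, since it is closed under $\vee$), and regard $(\varphi)_{\varphi\in\Lambda}$ as an increasing net. Complete regularity gives, for each $x\notin K$, a $\varphi\in\Lambda$ with $\varphi(x)=1$, so the pointwise supremum of the net is $\one_{X\setminus K}$; density of $X\setminus K$ then forces the least continuous upper bound to be the constant $\one$, that is, $\varphi\nearrow\one$ in $C_b(X)$. To see that restriction to $A$ destroys this supremum, note that every $\varphi\in\Lambda$ vanishes on $K\supseteq A\cap N$, so on $A$ the pointwise supremum of $(\varphi|_A)$ is $\one_{A\setminus K}$. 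Using complete regularity once more I pick $\gamma\in C(X,[0,1])$ with $\gamma(x_0)=0$ and $\gamma\equiv 1$ off $\Int N$, and put $g=\gamma|_A$. Then $g=\one$ on $A\setminus N=A\setminus K$, so $g$ dominates every $\varphi|_A$, while $g(x_0)=0<1=\one|_A(x_0)$. Thus $g$ is an upper bound of $(\varphi|_A)$ lying strictly below $\one|_A$, so $\one|_A$ is not the supremum and $\varphi|_A\not\nearrow\one|_A$ in $C_b(A)$ (and likewise in $C(A)$), refuting (c).

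I expect the main obstacle to be precisely this construction: one must arrange simultaneously that the net rises to the constant $\one$ in $C_b(X)$ — which requires the ``thin'' closed set $K$ to have dense complement, so that the lattice supremum exceeds the pointwise supremum at $x_0$ — and that the restrictions stay pinned at $0$ on a full $A$-neighborhood of $x_0$, which is what produces a continuous upper bound on $A$ strictly below $\one|_A$. Everything else is bookkeeping with \Cref{supst na odprto} and \Cref{regular}.
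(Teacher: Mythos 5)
Your proof is correct, and while two of its three implications match the paper, the overall route is genuinely different. Your (a)$\Rightarrow$(b) and (b)$\Rightarrow$(c) are the paper's arguments: apply \Cref{supst na odprto} on $\Int A$, use density of $\Int A$ in $A$ plus continuity, and use that $C_b(X)$ is an ideal, hence regular in $C(X)$ by \Cref{regular}. But the paper closes the equivalence by proving (b)$\Rightarrow$(a) and then (c)$\Rightarrow$(b), the latter via a separate truncation argument ($g\wedge n\one\nearrow_n g$ and an exchange of suprema), whereas you prove (c)$\Rightarrow$(a) directly, so the truncation step disappears and a single construction suffices. Your construction also differs from the paper's in two substantive ways. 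First, your directed family $\Lambda=\{\varphi\in C(X,[0,1]):\varphi|_K=0\}$, with $K=N\cap A$ cut out by a closed neighborhood $N$ of $x_0$ disjoint from $\overline{\Int A}$, does not impose the paper's extra requirement that the functions be identically $1$ on $\overline{\Int A}$ (the paper's family $\mathcal M$ does); density of $X\setminus K$, which follows since $K\cap\Int A=\emptyset$ forces $\Int_X K=\emptyset$, is all that is needed for $\varphi\nearrow\one$ in $C_b(X)$, so that side condition is shown to be inessential. Second, where the paper detects the failure of the restricted net by invoking \Cref{supst na odprto} a second time inside the subspace $A$ (applied to the relatively open set on which all members of $\mathcal M$ vanish), you exhibit an explicit continuous upper bound $g=\gamma|_A$ with $g\equiv 1$ on $A\setminus K$ but $g(x_0)=0$, directly witnessing that $\one|_A$ is not the least upper bound of $(\varphi|_A)$. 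Your route buys economy: one construction, entirely inside the bounded functions, which refutes (c) and, as you note, (b) as well. The paper's route buys the finer individual implications (b)$\Rightarrow$(a) and (c)$\Rightarrow$(b), the latter being a purely lattice-theoretic reduction of independent interest.
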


\begin{proof}
$(a)\Rightarrow (b)$ Suppose $A=\overline{\Int A}$ and let $f_\alpha\nearrow f$ in $C(X)$.
Let $g\colon A\to\RR$ be such that $f_\alpha|_{A}\le g\le f|_A$ for all $\alpha$.
By \Cref{supst na odprto}, $f_\alpha|_{\Int A}\nearrow f|_{\Int A}$ in $C(\Int A)$,
hence $g|_{\Int A}=f|_{\Int A}$. Because of continuity of $f$ and $g$ (on $A$)
and $A=\overline{\Int A}$ we get $g=f|_{A}$.

$(b)\Rightarrow (a)$ Suppose $A\nsubseteq \overline{\Int A}$. There exists $x_0\in A\setminus \overline{\Int A}$.
Because $X\in T_{3\frac 1 2}$ there exists $\widetilde\psi\colon X\to [-1,1]$ such that
$\widetilde\psi(x_0)=-1$ and $\widetilde\psi(\overline{\Int A})=1$. Let $\psi\colon X\to [0,1]$
be defined as $\psi=\max\{\widetilde\psi,0\}$. Then $U={\widetilde\psi}^{-1}([-1,0))\cap A$ is an open set
in $A$. Because $A$ is closed, $\overline U\subseteq A$ and $\overline U\subseteq {\widetilde\psi}^{-1}([-1,0])$,
hence $x_0\in U\subseteq \overline U\subseteq A\setminus {\overline{\Int A}}$.
Let
${\mathcal M}=\{f\colon X\to \RR:\; f|_{\overline U}\equiv 0, f|_{{\overline{\Int A}}}\equiv 1\}$. By the construction we have $\psi\in \mathcal M$, so that $\mathcal M$ is nonempty.
%\begin{itemize}

\emph{Claim 1:} If $f\le g$ for all $f\in{\mathcal M}$ then $g(x)\ge 1$ for all $x\in X\setminus \overline U$. Indeed, pick $x\not\in\overline U$. Because $X\in T_{3\frac 1 2}$ there exists $\varphi\colon X\to [0,1]$
such that $\varphi\equiv 0$ on $\overline U$ and $\varphi(x)=1$. Then $f:=\min\{\varphi+\psi,1\}\in{\mathcal M}$
and $f(x)=1$, which implies $g(x)\ge f(x)=1$.

\emph{Claim 2:} The set $X\setminus \overline U$ is dense in $X$. Indeed, otherwise there exists an open set $V\subseteq X$
such that $V\cap (X\setminus \overline U)=\emptyset$.
Then $V\subseteq \overline U\subseteq A\setminus \overline{\Int A}.$
This is a contradiction, since from $V\subseteq A$ and $V$ open follows $V\subseteq \Int A$.

Let us show that $(f)_{f\in{\mathcal M}}\nearrow 1$ in $C(X)$. Suppose  some function
$g\colon X\to \RR$ satisfies $f\le g\le 1$ for all $f\in{\mathcal M}$.
Claim 1 yields that $g\geq 1$ on $X\setminus \overline{U}$, so that together with $g\leq 1$ we conclude $g|_{X\setminus \overline U}\equiv 1.$ Since by Claim 2 the set $X\setminus \overline U$ is dense in $X$, we have $g\equiv 1$ on $X$. Therefore, $(f)_{f\in\mathcal M}\nearrow 1$ in $C(X)$.

Let us show that $(f|_A)_{f\in{\mathcal M}}$ does not converge in order to $1$ in $C(A)$.
If $(f|_A)_{f\in{\mathcal M}}\nearrow 1$, by \Cref{supst na odprto}
$(f|_U)_{f\in{\mathcal M}}\nearrow 1$. But $f|_U\equiv 0$ for all $f\in{\mathcal M}$.

(b) $\Rightarrow$ (c) Suppose $f_\alpha\nearrow f$ in $C_b(X)$. Then $f_\alpha\nearrow f$ in $C(X)$, so that $f_\alpha|_A\nearrow f|_A$ in $C(A)$. Since $f|_A$ is bounded, we have $f_\alpha|_A\nearrow f|_A$ in $C_b(A)$.

(c) $\Rightarrow$ (b) Since for each $g\in C(X)$ we have $g\wedge n\one\nearrow_n  g$ and since for each $n\in\mathbb N$ we have
$f_\alpha|_A\wedge n\one\nearrow_\alpha f|_A\wedge n\one,$ we conclude
\begin{align*}
f|_A&=\sup_{n\in\mathbb N}\left(f|_A\wedge n\one\right)=\sup_{n\in\mathbb N}\sup_{\alpha}\left(f_\alpha|_A\wedge n\one\right)\\
&=\sup_\alpha\sup_{n\in\mathbb N}\left(f_\alpha|_A\wedge n\one\right)=\sup_{n\in\mathbb N}f_\alpha|_A.
\end{align*}
\end{proof}

\begin{cor}\label{OC continuity closed set}
Let $A$ be a closed set in $X\in T_{3\frac 1 2}.$ Then the restriction operator $\Phi_A$ is order continuous \Iff $A=\overline{\Int A}.$
\end{cor}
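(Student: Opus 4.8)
The plan is to read off the corollary from \Cref{OC restriction} by translating its monotone-net conditions (b) and (c) into genuine order continuity of $\Phi_A$. The only real content is the standard reduction of order continuity of a \emph{positive} operator to its behaviour on monotone nets, which I would invoke and briefly justify.

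First I would recall that $\Phi_A$ is a lattice homomorphism (contractive in the bounded case), hence a positive operator, both as a map $C(X)\to C(A)$ and as a map $C_b(X)\to C_b(A)$. For a positive operator $T$ between (Archimedean) vector lattices, order continuity --- i.e. $h_\alpha\xrightarrow{o}0$ implies $Th_\alpha\xrightarrow{o}0$ --- is equivalent to the formally weaker requirement that $h_\alpha\downarrow 0$ implies $\inf_\alpha Th_\alpha=0$, equivalently that $f_\alpha\nearrow f$ implies $Tf_\alpha\nearrow Tf$ (see \cite{Aliprantis:03, Luxemburg:71}). This is exactly the shape of conditions (b) and (c).

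Next I would observe that this monotone condition is precisely statement (b) of \Cref{OC restriction} for $T=\Phi_A\colon C(X)\to C(A)$, and precisely statement (c) for $T=\Phi_A\colon C_b(X)\to C_b(A)$. Hence order continuity of $\Phi_A$ is equivalent to (b) (respectively (c)), which by \Cref{OC restriction} is equivalent to (a), namely $A=\overline{\Int A}$. Since (b) and (c) are themselves equivalent in that theorem, the conclusion does not depend on whether one works in $C(X)$ or in $C_b(X)$.

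The one point deserving care is the reduction itself, and it is the only obstacle --- albeit a standard one. For the forward direction, given an order-null net $(h_\alpha)$ with regulator $y_\gamma\downarrow 0$ in $C(X)$, I would note that $-y_\gamma\nearrow 0$, so condition (b) applied to this net yields $\Phi_A(-y_\gamma)\nearrow 0$, i.e. $\Phi_A y_\gamma\downarrow 0$ in $C(A)$; then $|\Phi_A h_\alpha|=\Phi_A|h_\alpha|\le\Phi_A y_\gamma\downarrow 0$ forces $\Phi_A h_\alpha\xrightarrow{o}0$, so $\Phi_A$ is order continuous. The converse is immediate, since an order continuous operator in particular preserves suprema of increasing nets, which is condition (b). Combining both directions with the equivalences above gives the corollary.
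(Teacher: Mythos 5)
Your proposal is correct and matches the paper's intent exactly: the paper states this corollary without proof as an immediate consequence of \Cref{OC restriction}, relying on precisely the standard fact you spell out, namely that a positive operator (here the lattice homomorphism $\Phi_A$) is order continuous \Iff it preserves suprema of increasing nets, which is conditions (b) and (c) of that theorem. Your careful handling of the reduction (using $y_\gamma\downarrow 0$ and $|\Phi_A h_\alpha|=\Phi_A|h_\alpha|\le\Phi_A y_\gamma$) is sound and simply makes explicit what the paper leaves implicit.
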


Suppose now that $X$ is compact and Hausdorff; hence normal and so $X\in T_{3 \frac 1 2}$. If $A$ is closed in $X$, then the kernel of $\Phi_A\colon C(X)\to C(A)$ is precisely the fixed ideal $J_A(X)$. In this special case, by \Cref{CharOfBands} and \Cref{OC continuity closed set} the restriction operator $\Phi_A$ is order continuous \Iff $J_A(X)$ is a band in $C(X)$.

When $X$ is merely normal, the restriction operator $\Phi_A$ is surjective by \Cref{Tietze and Urysohn}. By \cite[Theorem 18.13]{Luxemburg:71} $\Phi_A\colon C_b(X)\to C_b(A)$ is order continuous \Iff $\ker \Phi_A=J_A(X)$ is a band in $C_b(X)$. In \Cref{bands in Tychonoff} we will prove that for $X\in T_{3\frac 1 2}$ bands in $C_b(X)$ are precisely closed ideals in $C_b(X)$ of the form $J_A(X)$ where $A$ is a closed set satisfying $A=\overline{\Int A}.$

If we only assume $X\in T_{3\frac 1 2}$, by \Cref{Tietze and Urysohn} the restriction operator $\Phi_A$ is not surjective in general. When $A=\overline{\Int A}$, then \Cref{bands in Tychonoff} implies $J_A(X)$ is a band in $C_b(X)$ and $\Phi_A\colon C_b(X)\to \Ran \Phi_A\subseteq C_b(A)$ is an order continuous lattice homomorphism by \cite[Theorem 18.13]{Luxemburg:71}. Therefore, if $f_\alpha\nearrow f$ in $C_b(X)$, then $f_\alpha|_A\nearrow f|_A$ in the sublattice $\Ran \Phi_A$. Since $\Ran\Phi_A$ is order dense in $C_b(A)$ (see \Cref{order density in C_b} below), then it is also a regular sublattice of $C_b(A)$ and hence by \Cref{regular} we have $f_\alpha|_A\nearrow f|_A$ in $C_b(A)$.

\begin{prop}\label{order density in C_b}
Let $X\in T_{3 \frac 1 2}$, $A\subseteq X$, and $\Phi_A\colon C_b(X)\to C_b(A)$ the restriction operator.
Then $\Ran\Phi_A$ is order dense in $C_b(A)$.
\end{prop}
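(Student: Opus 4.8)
The plan is to verify order density in its pointwise (local) form: for every $h\in C_b(A)$ with $h>0$ I will produce $g\in C_b(X)$ such that $0<g|_A\le h$. This is precisely the defining property of an order dense Riesz subspace in the sense of \cite{Luxemburg:71}, and it is the form needed for the application following \Cref{order density in C_b} (namely, that $\Ran\Phi_A$ is then a regular sublattice of $C_b(A)$).

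First I would fix $h\in C_b(A)$ with $h\ge 0$ and $h\neq 0$, and choose a point $a_0\in A$ with $c:=h(a_0)>0$. Pick any $\varepsilon$ with $0<\varepsilon<c$. The set $W=\{a\in A:\ h(a)>\varepsilon\}$ is a nonempty relatively open subset of $A$ containing $a_0$, so there is an open set $O\subseteq X$ with $W=A\cap O$; in particular $a_0\in O$.

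Now I would exploit complete regularity of $X$ itself, not merely of the subspace $A$. Since $X\in T_{3\frac12}$ and $a_0\notin X\setminus O$ with $X\setminus O$ closed, there is a continuous $\varphi\colon X\to[0,1]$ with $\varphi(a_0)=1$ and $\varphi\equiv 0$ on $X\setminus O$. Setting $g=\varepsilon\varphi$, we get $g\in C_b(X)$ and $g\ge 0$. I then check the two inequalities on $A$: if $a\in A\setminus O$ then $\varphi(a)=0$, so $g(a)=0\le h(a)$; while if $a\in A\cap O=W$ then $h(a)>\varepsilon\ge\varepsilon\varphi(a)=g(a)$. Hence $g|_A\le h$, and since $g(a_0)=\varepsilon>0$ we have $g|_A\neq 0$, i.e.\ $0<g|_A\le h$ with $g|_A\in\Ran\Phi_A$. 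This establishes order density.

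The construction presents no serious obstacle; the only points requiring care are that the separating function must be defined on all of $X$ rather than on $A$ (which is why I invoke complete regularity of $X$ directly, rather than of the subspace $A$), and that the truncation height $\varepsilon$ must be chosen strictly below the value $h(a_0)$ so that the restriction of $g$ stays below $h$ on $O\cap A$. The role of $W$ being relatively open is exactly to produce the ambient open set $O$ on which $h$ exceeds $\varepsilon$, so that $\varphi\equiv 0$ off $O$ forces $g|_A\le h$ at the remaining points of $A$.
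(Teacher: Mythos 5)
Your proof is correct and follows essentially the same route as the paper's: both pass to the relatively open set where the function exceeds a positive threshold, lift it to an ambient open set in $X$, and use complete regularity of $X$ to produce a scaled bump function whose restriction sits below the given function. The only cosmetic difference is that the paper fixes the threshold $\tfrac{f(a)}{2}$ where you allow an arbitrary $\varepsilon\in(0,h(a_0))$.
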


\begin{proof}
Pick a nonzero nonnegative function $f\in C(A)$. Let $a\in A$ be such that $f(a)>0$.
Because $f$ is continuous there exists an open set $U$ in $A$ such that $f(x)>\tfrac{f(a)}2$ for all $x\in U$.
Let $V$ be open in $X$ such that $V\cap A=U$. Because $X\in T_{3 \frac 1 2}$ there exists
$g\colon X\to[0,\tfrac{f(a)}2]$ such that $g(a)=\tfrac{f(a)}2$ and $g\equiv 0$ on $V^C$.
For $x\in U$ we have $f(x)>\tfrac{f(a)}2\geq g(x)$ and for $x\in A\setminus U$ we have $f(x)\ge 0=g(x)$.
Hence $0\leq g|_A\leq f$.
\end{proof}

%
%Therefore, $\Phi|_A$ is order continuous \Iff $A=\overline{\Int A}$ by \Cref{CharOfBands}. In \Cref{C_b} we extend this result from compact Hausdorff spaces to normal spaces.

\section{Bands in $C_0(X)$} \label{C_0}

In this section we are interested in closed ideals of the Banach lattice algebra $C_0(X)$ where $X$ is a locally compact Hausdorff space. We characterize closed ideals, bands and projection bands of $C_0(X)$ (see \Cref{ideals in c_0} and \Cref{bands in c_0}). The characterizations are pretty much the same as the characterization of closed ideals, bands and projection bands of the algebra of all continuous functions on a given compact Hausdorff space (see \Cref{CharOfClosId} and \Cref{CharOfBands}). Our arguments presented in this section are based on the embedding of $X$ into its one-point compactification $X^+$. Before we proceed to our results of this section we briefly recall basic facts about the topology on $X^+$.

Let $X$ be a locally compact Hausdorff space and pick any object $\infty$ which is not in $X$. It is well-known that the family $\tau$ of all open sets in $X$ together with the sets in $X^+$  containing $\infty$ whose complements are compact in $X$ is a topology on $X^+$ and the space $(X^+,\tau)$ is a compact Hausdorff space. The space $X^+$ is called the \term{one-point compactification} of $X$. By \cite[8.3 p.245]{Dugundji:66} the embedding $X\hookrightarrow X^+$ is an open mapping; in particular, $X$ is an open subset of $X^+$. If $X$ is noncompact, then $X$ is dense in $X^+$, and if $X$ is compact, then $X$ is clopen in $X^+$ and $\infty$ is isolated in $X^+$.

Suppose now that $X$ is not compact  and $f\in C_0(X)$. Then $f$ has the unique extension $\widetilde f\in C(X^+)$ on $X^+$ given by
$$\widetilde f(x)=\left\{\begin{array}{ccl}
f(x) &: &x\in  X\\
0 &: &x=\infty
\end{array}
\right..$$
The mapping $f\mapsto \widetilde f$ is an algebra and lattice isometric isomorphism between $C_0(X)$ and the closed maximal lattice and algebra ideal $J_{\{\infty\}}(X^+)$ in $C(X^+)$.
For brevity purposes we will write $J_{\{\infty\}}$ instead of $J_{\{\infty\}}(X^+).$
The closed lattice (resp. algebra) ideals of $C_0(X)$ are therefore in bijective correspondence with the closed lattice (resp. algebra) ideals of $C(X^+)$ that consist of functions that vanish at infinity.

As was already mentioned closed ideals are characterized similarly as the closed ideals in $C(K)$-spaces.

\begin{thm}\label{ideals in c_0}
Let $X$ be a locally compact Hausdorff space and $J$ a closed subspace in $C_0(X)$.
The following assertions are equivalent:
\begin{enumerate}
\item [(a)] $J$ is an $a$-ideal in $C_0(X)$.
\item [(b)] $J$ is an $o$-ideal in $C_0(X)$.
\item [(c)] $J=J_F^0(X)$ for some closed set $F$ in $X$.
\end{enumerate}
\end{thm}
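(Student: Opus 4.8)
The plan is to prove the equivalence of (a), (b), and (c) in \Cref{ideals in c_0} by transferring the question from $C_0(X)$ to the compact space $X^+$ via the isometric lattice-and-algebra isomorphism $f\mapsto\widetilde f$ identifying $C_0(X)$ with the ideal $J_{\{\infty\}}$ in $C(X^+)$, and then invoking \Cref{CharOfClosId}. First I would handle the trivial case when $X$ is compact: then $X^+$ has $\infty$ isolated and clopen, $C_0(X)=C(X)$, and the statement reduces directly to \Cref{CharOfClosId}. So the substance is the noncompact case, where $X$ is dense in $X^+$.

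The key bookkeeping step is to match up the three notions under the isomorphism. A closed subspace $J\subseteq C_0(X)$ is carried to a closed subspace $\widetilde J\subseteq J_{\{\infty\}}\subseteq C(X^+)$, and because the map is both an algebra and a lattice isomorphism, $J$ is an $a$-ideal (resp.\ $o$-ideal) in $C_0(X)$ \Iff $\widetilde J$ is an $a$-ideal (resp.\ $o$-ideal) \emph{in} $J_{\{\infty\}}$. To apply \Cref{CharOfClosId} I need these to be ideals in all of $C(X^+)$, not merely in the ideal $J_{\{\infty\}}$. For algebra ideals this is clear since $J_{\{\infty\}}$ is itself an $a$-ideal of $C(X^+)$ and composition of the ideal relation is transitive; for order ideals the same transitivity holds because an $o$-ideal of an $o$-ideal is an $o$-ideal. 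Thus $\widetilde J$ is an $a$-ideal (resp.\ $o$-ideal) in $C(X^+)$, and \Cref{CharOfClosId} gives $\widetilde J=J_G(X^+)$ for some closed $G\subseteq X^+$.

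It remains to translate the closed set $G$ in $X^+$ back to a closed set $F$ in $X$ and to verify $J=J_F^0(X)$. Here I would use that $\widetilde J\subseteq J_{\{\infty\}}$ forces every function in $\widetilde J$ to vanish at $\infty$, i.e.\ $\infty\in G$ (otherwise $J_G(X^+)$ would contain a function nonzero at $\infty$). Setting $F:=G\cap X=G\setminus\{\infty\}$, which is closed in $X$, I would check that $f\in J$ \Iff $\widetilde f|_G\equiv 0$ \Iff $f|_F\equiv 0$, so that $J=J_F^0(X)$; conversely any $J_F^0(X)$ arises as the restriction of $J_{\overline F^{X^+}}(X^+)$. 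The main obstacle I anticipate is precisely the careful handling of the point at infinity: ensuring $\infty\in G$, checking that $\overline F^{X^+}=F\cup\{\infty\}$ when $F$ is noncompact (so the correspondence $F\leftrightarrow G$ is clean), and confirming that the closed ideal $J_F^0(X)$ genuinely pulls back the full $C(X^+)$-ideal structure rather than something larger. All of these are routine once the transfer via $f\mapsto\widetilde f$ is set up, but they are where a hurried argument would slip.
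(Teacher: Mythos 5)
Your overall route---transferring $J$ to a closed subspace $\widetilde J$ of $J_{\{\infty\}}\subseteq C(X^+)$ via $f\mapsto\widetilde f$, invoking \Cref{CharOfClosId}, and then translating the closed set $G\subseteq X^+$ back to $F=G\cap X$---is exactly the paper's strategy, and your bookkeeping at the point at infinity (that $\infty\in G$, and that $J=J^0_{G\cap X}(X)$) is correct. The gap is the sentence claiming that passing from ``$\widetilde J$ is an $a$-ideal in $J_{\{\infty\}}$'' to ``$\widetilde J$ is an $a$-ideal in $C(X^+)$'' is clear because ``the ideal relation is transitive.'' For \emph{order} ideals transitivity is genuinely true (if $|h|\le |f|$ with $f\in\widetilde J$, then $h\in J_{\{\infty\}}$ since $J_{\{\infty\}}$ is an $o$-ideal of $C(X^+)$, and then $h\in\widetilde J$ since $\widetilde J$ is an $o$-ideal of $J_{\{\infty\}}$); this is precisely what the paper uses in its proof of (b)$\Rightarrow$(a). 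For \emph{algebra} ideals it is false in general: an ideal of an ideal need not be an ideal. For instance, in $A=\RR[x]/(x^4)$ with $I=(x^2)$ one has $I\cdot I=0$, so the subspace $J=\operatorname{span}\{x^2\}$ is a ring ideal of $I$, yet $x\cdot x^2=x^3\notin J$, so $J$ is not an ideal of $A$. Since your implication (a)$\Rightarrow$(c) rests entirely on this step, it is a genuine gap as written.

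The step is repairable, but it needs the closedness of $\widetilde J$ and an approximate-unit (truncation) argument, not pure algebra: given $g\in C(X^+)$ and $f\in\widetilde J$, set $u_n=\min\{n|f|,1\}$. Then $u_n\in J_{\{\infty\}}$, hence $gu_n\in J_{\{\infty\}}$ and therefore $(gu_n)f\in\widetilde J$ because $\widetilde J$ is an $a$-ideal of $J_{\{\infty\}}$; moreover $\norm{gf-(gu_n)f}_\infty\le \norm{g}_\infty/n\to 0$, so $gf\in\widetilde J$ by closedness. Alternatively, you can sidestep the issue the way the paper does: it cites a known characterization of closed algebra ideals of $C_0(X)$ (Kaniuth, Theorem 1.4.6) to get (a)$\Leftrightarrow$(c) outright, and runs the compactification transfer only for the lattice direction (b)$\Rightarrow$(a), where the transitivity argument is legitimate.
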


\begin{proof}
When $X$ is compact the conclusion follows from \ref{CharOfClosId}, so that we may assume $X$ is not compact.
By \cite[Theorem 1.4.6]{Kaniuth:09} (a) and (c) are equivalent. It is trivial that (c) implies (b).

To finish the proof we prove that (b) implies (a). Since $C_0(X)$ is isometrically lattice isomorphic to  $J_{\{\infty\}}$, $J$ is isometrically lattice isomorphic to some closed lattice ideal $J'=J_{F'}(X^+)$ of $C(X^+)$ for some closed subset $F'\subseteq X^+$ which contains the point $\infty$. By \Cref{CharOfClosId} $J'$ is also an $a$-ideal, from where it follows that $J=\{f|_X:\, f\in J'\}$ is an $a$-ideal in $C_0(X)$.
\end{proof}

We proceed by a characterization of bands and projection bands in $C_0(X)$. Our proofs presented here rely on the embedding $X\hookrightarrow X^+$.

\begin{thm}\label{bands in c_0}
Let $X$ be a locally compact Hausdorff space and $J$ a closed ideal in $C_0(X)$.
\begin{enumerate}
\item [(a)] $J$ is a band  \Iff $J=J_F^0(X)$ for some closed set $F$ in $X$ with  $F=\overline{\Int F}$.
\item [(b)] $J$ is a projection band \Iff $J=J_F^0(X)$ for some clopen set $F$ in $X$.
\end{enumerate}
\end{thm}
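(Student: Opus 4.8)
The plan is to reduce both parts to the already-established characterization of closed ideals (\Cref{ideals in c_0}) together with the order-continuity criterion for the restriction operator (\Cref{OC continuity closed set}), working directly inside $X$ rather than passing to $X^+$. Since $J$ is by hypothesis a closed ideal, \Cref{ideals in c_0} gives $J=J_F^0(X)$ for some closed $F\subseteq X$; the content of (a) and (b) is therefore to pin down which closed sets $F$ produce bands, respectively projection bands. Throughout I will use that $C_0(X)$ is an ideal, hence a regular sublattice, of $C(X)$, so by \Cref{regular} a relation $0\le f_\alpha\nearrow f$ in $C_0(X)$ also holds in $C(X)$; and that $X$, being locally compact Hausdorff, is $T_{3\frac12}$ and admits compactly supported Urysohn bumps separating a point from a disjoint closed set.

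For the easy direction of (a), suppose $F=\overline{\Int F}$ and take $0\le f_\alpha\nearrow f$ in $C_0(X)$ with each $f_\alpha\in J_F^0(X)$. Regularity promotes this to $f_\alpha\nearrow f$ in $C(X)$, and \Cref{OC continuity closed set} says $\Phi_F\colon C(X)\to C(F)$ is order continuous precisely because $F=\overline{\Int F}$; hence $0=f_\alpha|_F\nearrow f|_F$, forcing $f|_F=0$, i.e. $f\in J_F^0(X)$, so $J_F^0(X)$ is a band. The converse is the heart of the matter. Assuming $F\neq\overline{\Int F}$, I pick $x_0\in F\setminus\overline{\Int F}$ and, using local compactness and complete regularity, a bump $\phi\in C_c(X)$ with $0\le\phi\le 1$, $\phi(x_0)>0$, and $\operatorname{supp}\phi\cap\overline{\Int F}=\emptyset$. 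I then consider the upward-directed family $\mathcal M=\{g\in C_0(X):\ 0\le g\le\phi,\ g|_F=0\}\subseteq J_F^0(X)$ and claim $\sup\mathcal M=\phi$ in $C_0(X)$. Granting the claim, $(g)_{g\in\mathcal M}\nearrow\phi$ is a net in $J_F^0(X)$ whose supremum does not vanish at $x_0\in F$, so $J_F^0(X)$ is not a band, a contradiction.

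The main obstacle is this supremum computation, which mirrors the delicate construction in the proof of \Cref{OC restriction}. To prove $\sup\mathcal M=\phi$, let $g\in C_0(X)$ be any upper bound of $\mathcal M$. For $y\notin F$, complete regularity provides $\lambda\colon X\to[0,1]$ with $\lambda(y)=1$ and $\lambda|_F=0$; then $\lambda\phi\in\mathcal M$ and $g(y)\ge(\lambda\phi)(y)=\phi(y)$, so $g\ge\phi$ off $F$. Since $\phi$ is supported away from $\overline{\Int F}$, every point of $\{\phi>0\}\cap F$ lies in $F\setminus\Int F\subseteq\overline{X\setminus F}$, so by continuity $g\ge\phi$ there as well; and $g\ge 0$ trivially where $\phi=0$. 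Hence $g\ge\phi$, so $\phi$ is the least upper bound, completing (a).

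For (b), if $F$ is clopen then $\Int F=F=\overline{\Int F}$, so $J_F^0(X)$ is a band by (a); the decomposition $f=f\cdot\one_F+f\cdot\one_{X\setminus F}$ (continuous because $F$ is clopen) exhibits $C_0(X)=J_F^0(X)\oplus J_{X\setminus F}^0(X)$ with $J_{X\setminus F}^0(X)=\bigl(J_F^0(X)\bigr)^d$, so $J_F^0(X)$ is a projection band. Conversely, if $J=J_F^0(X)$ is a projection band, a computation identical to that for $J_F(X)^d$ in \Cref{Section: order density} gives $\bigl(J_F^0(X)\bigr)^d=J_{\overline{X\setminus F}}^0(X)$, and $C_0(X)=J_F^0(X)\oplus J_{\overline{X\setminus F}}^0(X)$ forces every $h\in C_0(X)$ to vanish on $F\cap\overline{X\setminus F}=\partial F$, since one summand vanishes on $F$ and the other on $\overline{X\setminus F}$. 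As a compactly supported Urysohn bump is nonzero at any prescribed point, $\partial F=\emptyset$, i.e. $F$ is clopen.
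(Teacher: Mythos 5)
Your proof is correct, but for part (a) it takes a genuinely different route from the paper. The paper reduces everything to the compact case by passing to the one-point compactification: it extends the functions to $X^+$, invokes \Cref{CharOfBands} for $C(X^+)$ (with a case analysis according to whether $\Cl_{X^+}(\Int F)$ equals $F$ or $F\cup\{\infty\}$), and for the converse builds an auxiliary closed set $F\cup\overline U$ in $X^+$ (where $U$ is a neighborhood of $\infty$ missing the bad point $x$), pulls a band-violating net out of $C(X^+)$, and restricts it back to $X$, using \Cref{supst na odprto} to check the limit lies in $C_0(X)$ and does not vanish on $F$. You instead stay inside $X$ throughout: the backward implication follows from regularity of the ideal $C_0(X)$ in $C(X)$ together with \Cref{OC continuity closed set} (legitimate, since locally compact Hausdorff spaces are $T_{3\frac12}$), and for the forward implication you rerun, in the $C_0$ setting, a supremum computation in the spirit of the family $\mathcal M$ from the proof of \Cref{OC restriction}: your verification that $\sup\mathcal M=\phi$ (upper bounds dominate $\phi$ off $F$ via Urysohn multipliers $\lambda\phi$, then on $\{\phi>0\}\cap F\subseteq\overline{X\setminus F}$ by continuity) is sound, and the upward-directed family $\mathcal M\subseteq J_F^0(X)$ with supremum $\phi\notin J_F^0(X)$ indeed witnesses failure of the band property. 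The trade-off: the paper's route reuses the compact case wholesale and avoids redoing any supremum argument, at the cost of bookkeeping at $\infty$; your route avoids the compactification entirely and makes transparent that the result is really a statement about order continuity of the restriction operator, at the cost of repeating (in adapted form) the construction already buried in the proof of \Cref{OC restriction}. Your part (b) is essentially the paper's argument: the clopen decomposition via characteristic functions in one direction, and in the other the identification $\bigl(J_F^0(X)\bigr)^d=J_{\overline{X\setminus F}}^0(X)$ forcing every function to vanish on $\partial F$, whence $\partial F=\emptyset$.
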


\begin{proof}
(a) Let $F=\Cl_{X}(\Int F)$ and let $0\leq f_\alpha\nearrow f$, where $f_\alpha\in J_F^0(X)$ and $f\in C_0(X)$.
Denote  by $\widetilde f_\alpha\colon X^+\to \RR$ the extension of $f_\alpha$ on $X^+$.
Then $0\leq \widetilde f_\alpha\nearrow \widetilde f$ in $J_{\{\infty\}}$, so that by \Cref{regular} $\widetilde f_\alpha\nearrow \widetilde f$ in $C(X^+)$.
If $\Cl_{ X^+}(\Int F)=F$, then $J_F(X^+)$ is a band in $C(X^+)$, hence $\widetilde f\in J_F(X^+)$ and $f\in J^0_F(X)$.
If $\Cl_{ X^+}(\Int F)=F\cup\{\infty\}$, then $\Cl_{ X^+}(\Int (F\cup\{\infty\}))=F\cup\{\infty\}$, hence
$J_{F\cup\{\infty\}}(X^+)$ is a band in $C(X^+)$,
$\widetilde f\in J_{F\cup\{\infty\}}(X^+)$, and $f\in J^0_F(X)$.

If $\Cl_{X}(\Int F)\subsetneq F$, then there exists $x\in F\setminus\Cl_{ X}(\Int F)$.
Let $U$ be an open set such that $\infty\in U\subseteq \overline U\subseteq\{x\}^C$.
We claim that $x\in F\cup \overline U\setminus \Cl_{ X^+}(\Int(F\cup\overline U))$.
%Suppose $x\notin F\bigcup \overline U\setminus \Cl_{ X^+}(\Int(F\bigcup\overline U))$.
Since $x\notin \Cl_X(\Int F)$ there exists an open neighborhood $W_1$ of $x$ such that $W_1\cap \Int F=\emptyset$.
For $W_2:=X^+\setminus\overline U$ we have $x\in W_2$ and
\begin{align*}
W_1\cap W_2\cap \Int(F\cup \overline U)&\subseteq W_1\cap W_2\cap (F\cup \overline U)\\
&=(W_1\cap W_2\cap F)\cup (W_1\cap W_2\cap \overline U)\\
&=W_1\cap W_2\cap F.
\end{align*}
Since the set $W_1\cap W_2\cap \Int(F\cup \overline U)$ is open, we have \begin{align*}
W_1\cap W_2\cap \Int(F\cup \overline U)&\subseteq \Int(W_1\cap W_2\cap F)=W_1\cap W_2\cap \Int F\\
&=(W_1\cap \Int F)\cap W_2=\emptyset.
\end{align*}
Therefore, $W_1\cap W_2$ is an open neighborhood of $x$ which does not intersect $\Int(F\cup \overline U)$, hence $x\notin \Cl_{X^+}\Int(F\cup \overline U).$

Because $F\cup\overline U$ is closed in $X^+$ and $F\cup\overline U\ne \Cl_{ X^+}(\Int(F\cup\overline U))$,
there exists a net $(f_\alpha)_{\alpha\in\Lambda}
\subseteq J_{F\cup\overline U}(X^+)$ such that $f_\alpha\nearrow f\in C( X^+)\setminus J_{F\cup\overline U}(X^+)$.
Then $f_{\alpha}|_{ X}\in J^0_F(X)$ and $f_{\alpha}|_{ X}\nearrow f|_{ X}\in C_0(X)$. %\setminus J_F$,
Since $f_\alpha|_U\equiv 0$, \Cref{supst na odprto} implies  $f|_U\equiv 0$, so that by continuity of $f$ we have $f|_{\overline U}\equiv 0$. Now it follows that $f|_F\not\equiv 0$,  hence $J^0_F(X)$ is not a band in $C_0(X)$.

(b) If $F$ is clopen, then $X\setminus F$ is clopen as well, so that the characteristic functions $\chi_F$ and $\chi_{X\setminus F}$ are continuous on $X$.
Then $f_1:=f\chi_{X\setminus F}\in J^0_F(X)$, $f_2:=f\chi_F\in J^0_F(X)^d$ and $f=f_1+f_2$ imply $C_0(X)=J_F^0(X)\oplus J_F^0(X)^d.$

Suppose now that $J$ is a projection band in $C_0(X)$. By (a) we have $J=J_F^0(X)$ for some closed set $F$ in $X$. We claim $X\setminus F$ is closed in $X$. Otherwise there is $x_0\in F\cap \overline{X\setminus F}.$ Pick any function $f\in C_0(X)$ with $f(x_0)=1.$ Since $J$ is a projection band, we have $C_0(X)=J_F^0(X)\oplus J_{\overline{X\setminus F}}^0(X)$. Hence, $f=f_1+f_2$ for some $f_1\in J_F^0(X)$ and $f_2\in J_{\overline{X\setminus F}}^0(X)$, so that $f(x_0)=0$. This contradiction shows $X\setminus F$ is closed. Hence $F$ is clopen and the proof is finished.
%Since $J_F^0(X)^d=J^0_{\overline{ X\setminus F}}$, the same result implies $\overline{ X\setminus F}=\overline{\Int( X\setminus F)}.$ If $ X\setminus F$ is not closed, then there exists $x\in F\cap \overline{ X\setminus F}$. By Urysohn's lemma for locally compact Hausdorff spaces one can find a nonnegative function $f\in C_0(X)$ such that $f(x)=1$. Then $f=f_1+f_2$ for some $f_1\in J_F$ and $f_2\in J_{\overline{ X\setminus F}}.$ By calculating $f(x)=f_1(x)+f_2(x)=0$ we reach a contradiction; therefore $X\setminus F$ is closed.
\end{proof}

Closed ideals in $C_0(X)$ which are projection bands in $C(X^+)$ are more interesting. Not only that the set $F$ which defines a given closed ideal is clopen, its complement in $X$ is compact.

\begin{cor}\label{C_0 projection in C}
Let $ X$ be a locally compact Hausdorff space and $J_F^0(X)$ a closed ideal in $C_0(X)$. Then $J_F^0(X)$ is a projection band in $C(X^+)$ \Iff $F$ is closed in $X$ and $X\setminus F$ is compact.
\end{cor}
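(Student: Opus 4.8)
The plan is to transport the whole problem into the compact space $X^+$ and then invoke the characterization of projection bands in $C(X^+)$ supplied by \Cref{CharOfBands}(b). Since $J_F^0(X)=J_{\overline F}^0(X)$ (a continuous function vanishes on $F$ \Iff it vanishes on $\overline F$), I may assume from the start that $F$ is closed in $X$; then $X\setminus F$ is open in $X$ and hence open in $X^+$, so that $F\cup\{\infty\}$ is a \emph{closed} subset of $X^+$. Using the isometric lattice isomorphism $f\mapsto\widetilde f$ that identifies $C_0(X)$ with $J_{\{\infty\}}$, a function $f\in C_0(X)$ lies in $J_F^0(X)$ exactly when $\widetilde f$ vanishes on $F$ and at $\infty$; therefore the image of $J_F^0(X)$ inside $C(X^+)$ is precisely $J_{F\cup\{\infty\}}(X^+)$. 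The question thus becomes: for which closed $F$ is $J_{F\cup\{\infty\}}(X^+)$ a projection band in $C(X^+)$? (If $X$ happens to be compact, $\infty$ is isolated and $X$ is clopen in $X^+$, and every step below goes through verbatim, so I treat that case together with the generic one.)

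Next I would apply \Cref{CharOfBands}(b): $J_{F\cup\{\infty\}}(X^+)$ is a projection band \Iff it coincides with $J_G(X^+)$ for some clopen $G\subseteq X^+$. Because $X^+$ is compact Hausdorff, hence $T_{3\frac12}$, the assignment $G\mapsto J_G(X^+)$ is injective on closed sets, so such a $G$ must equal the closed set $F\cup\{\infty\}$. Consequently $J_F^0(X)$ is a projection band in $C(X^+)$ \Iff $F\cup\{\infty\}$ is clopen in $X^+$.

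It remains to translate ``$F\cup\{\infty\}$ clopen in $X^+$'' into a condition on $X$. This set is already closed, so the only content is its openness, which is equivalent to its complement $X^+\setminus(F\cup\{\infty\})=X\setminus F$ being closed in $X^+$. Since $X\setminus F$ is a subset of the compact Hausdorff space $X^+$, being closed in $X^+$ is the same as being compact. Hence $F\cup\{\infty\}$ is clopen in $X^+$ \Iff $X\setminus F$ is compact, which is exactly the asserted equivalence. As a sanity check, compactness of $X\setminus F$ forces $X\setminus F$ to be closed, so $F$ is open and therefore clopen, which matches the remark that such an ideal is in particular a projection band inside $C_0(X)$ by \Cref{bands in c_0}(b).

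The only genuinely delicate step is the first one: correctly identifying the image of $J_F^0(X)$ in $C(X^+)$ as $J_{F\cup\{\infty\}}(X^+)$ and verifying that $F\cup\{\infty\}$ is closed in $X^+$. Once this identification is secured, everything reduces to \Cref{CharOfBands}(b) and the elementary fact that, in the compact Hausdorff space $X^+$, closedness and compactness of subsets of $X$ coincide; no further estimates are needed.
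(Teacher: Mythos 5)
Your proof is correct and follows essentially the same route as the paper's: both identify $J_F^0(X)$ with $J_{F\cup\{\infty\}}(X^+)$ via $f\mapsto\widetilde f$, invoke \Cref{CharOfBands}(b), and translate clopenness of $F\cup\{\infty\}$ in $X^+$ into closedness of $F$ together with compactness of $X\setminus F$. The only cosmetic differences are that you argue by a single chain of equivalences (making explicit the injectivity of $G\mapsto J_G(X^+)$ on closed sets and the reduction to closed $F$), while the paper writes out the two implications separately.
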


\begin{proof}
Suppose first that $J_F^0(X)$ is a projection band in $C(X^+)$. Then $F\cup \{\infty\}$ is clopen in $X^+$, from where it follows that $F$ is clopen in $X$. Since
$$X\setminus F=X^+\setminus (F\cup\{\infty\})$$ is also closed in $X^+$, the set $X\setminus F$ is compact.

Suppose now that $F$ is closed  in $X$ and $X\setminus F$ is compact. Since $F$ is closed in $ X$ it follows that $F\cup\{\infty\}$ is closed in $ X^+$. Since $ X\setminus F$ is compact, $F\cup\{\infty\}$ is also open in $ X^+$.
Therefore, $F\cup\{\infty\}$ is clopen in $ X^+$, so that
$J_F^0(X)\cong J_{F\cup\{\infty\}}(X^+)$ is a projection band in $C(X^+)$ by \Cref{CharOfBands}(b).
\end{proof}
%
%
%In the following remark we present also the lattice-theoretical proof of \Cref{bands in c_0}. The proof mimics the proof of \cite[Corollary 2.1.10]{Meyer-Nieberg:91}.
%
%\begin{rem}\label{remark}
%By \cite[Theorem 1.39]{Aliprantis:06} $J$ is a band in $C_0(X)$ \Iff $J=J^{dd}.$ Since $J$ is a closed ideal, there is a closed set $F$ in $ X$ such that $J=J_F$. Then $J^d=J_{\overline{ X\setminus F}}$ and $J^{dd}=J_{\overline{ X\setminus \overline{ X\setminus F}}}.$ Therefore,
%$J$ is a band in $C_0( X)$ \Iff $J=J_{\overline U}$ where $U= X\setminus \overline{ X\setminus F}.$
%\end{rem}

%The following result immediately follows from \Cref{bands in ideals} and the fact that ideals are regular sublattices.

%\begin{cor}\label{band in C is a band in C_0}
%Let $J$ be an ideal in $C_0(X)$. If $J$ is a band in $C(X^+)$, then $J$ is a band in $C_0(X)$.
%\end{cor}

Suppose $J$ is an ideal in $C_0(X)$. If $J$ is a band in $C(X^+)$, then $J$ is a band in $C_0(X)$ by \Cref{bands in ideals}. This fact can be also proved directly with results of this section without referring to general result on vector lattices. Indeed, if $J_F^0(X)$ in $C_0(X)$ is band in $C(X^+)$, then \Cref{CharOfBands} implies that $F\cup\{\infty\}=\Cl_{ X^+}(\Int(F\cup\{\infty\}))$.
  Since we have $\Int_XF=\Int_{X^+}F=\Int_{X^+}(F\cup\{\infty\})$, we have
  \begin{align*}
  \Cl_X\Int_XF&=X\cap \Cl_{X^+}\Int_XF=X\cap \Cl_{X^+}\Int_{X^+}(F\cup\{\infty\})\\
  &=X\cap (F\cup\{\infty\})=F.
  \end{align*}
  By \Cref{bands in c_0} $J_F^0(X)$ is a band in $C_0( X)$.

\section{Bands in $C_b( X)$} \label{C_b}

In this section we are interested in extending \Cref{CharOfClosId} and \Cref{CharOfBands} to the case of the algebra $C_b(X)$ where $X$ is a Hausdorff space. To have a nice characterization of closed ideals of $C_b(X)$ in terms of closed sets from $X$ is too optimistic even when $X\in T_{3 \frac 1 2}.$ If for some $X\in T_{3 \frac 1 2}$ every closed ideal in $C_b(X)$ is of the form $J_F(X)$ for some closed set $F$ in $X$, then every closed ideal in $C_b(X)$ is fixed, so that by \Cref{maximals compact} $X$ is compact. Hence, if $X$ is a noncompact space, then there exist closed ideals in $C_b(X)$ which are not of the form $J_F(X)$ for some closed set $F$ in $X$. However, if $X\in T_{3\frac 1 2}$ then bands and projection bands in $C_b(X)$ have similar characterizations as bands in $C(K)$-spaces (see \Cref{bands in Tychonoff}).  If $X$ is merely Hausdorff, then only projection bands in $C_b(X)$ have a similar characterization as those in $C(K)$-spaces (see \Cref{proj band in hausdorff}). Unfortunately, \Cref{counterex} shows that there exists a functionally Hausdorff space $X$ and a band $J_F(X)$ in $C_b(X)$ with $\overline{\Int F}\neq F$.

The following topological facts are critical in our study of bands and projection bands in $C_b(X)$.

\begin{itemize}
    \item For every space $X\in T_{3\frac 1 2}$ there exists a compact Hausdorff space $\beta X$ with the property that the inclusion $\triangle: X\hookrightarrow \beta X$ is an embedding and $\beta X$ contains $X$ as a dense subset. Furthermore, every function $f\in C_b(X)$ can be uniquely extended to the continuous function $f^\beta\in C(\beta X)$.
    \item For every Hausdorff space $X$ there exists $X/_\sim\in T_{3\frac 1 2}$ and a continuous surjective mapping $\tau\colon X\to X/_\sim$ such that the mapping $\Phi_\tau\colon C_b(X/_\sim)\to C_b(X)$ defined as $\Phi_\tau(\widetilde f)= \widetilde f\circ \tau$ is an isometric isomorphism between Banach lattice algebras $C_b(X/_\sim)$ and $C_b(X)$.
        We will recall the construction later in this section.
\end{itemize}

It is well-known that the compact Hausdorff space $\beta X$ is called the \term{Stone-\v Cech compactification} of $X$. By \cite[Theorem 3.70]{Aliprantis-Border:06} the inclusion $\triangle\colon X\hookrightarrow \beta X$ is an open mapping whenever $X$ is locally compact and Hausdorff; in particular, in this case $X$ is open in $\beta X$.

The mapping $\beta\colon C_b(X)\to C(\beta X)$ defined as $\beta(f)= f^\beta$ is an isometric isomorphism of Banach algebras. By \cite[Theorem 1.6]{Gillman-Jerison:76} the mapping $\beta$ is also a lattice isomorphism. Therefore, $J$ is a closed order/algebra ideal (resp. band, projection band) in $C_b(X)$ \Iff $\beta(J)$ is a closed ideal (resp. band, projection band) in $C(\beta X)$. Similarly, given a Hausdorff space $X$, the aforementioned space $X/_\sim$ is $T_{3 \frac 1 2}$, and the isomorphism between Banach algebras $C_b(X)$ and $C_b(X/_\sim)$ is also a lattice isomorphism, again by \cite[Theorem 1.6]{Gillman-Jerison:76}.
Therefore, starting with a subspace $J$ of $C_b(X)$ where $X$ is Hausdorff we deduce that $J$ is a closed order/algebra ideal in $C_b(X)$ \Iff
$\Phi_\tau^{-1}(J)$ is a closed order/algebra ideal in $C_b(X/_\sim)$ \Iff
$(\beta\circ \Phi_\tau^{-1})(J)$ is a closed order/algebra ideal in $C(\beta X)$. Similar statements hold for bands and projection bands.

Therefore, for any closed ideal $J$ in $C_b(X)$ where $X\in T_{3\frac 1 2}$ there exists a closed subset $F$ in $\beta X$ such that
$J=\{f|_X:\; f\in J_F(\beta X)\}.$ Although this description is sufficient to have some information on closed ideals of $C_b(X)$, it is not the best one since $F$ is a subset of $\beta X$ and not of $X$.

The following two results are needed in the proof of \Cref{bands in Tychonoff}. Although the proof of the first one is quite trivial we include it for the sake of completeness.

\begin{lemma}\label{fixed in Stone-Chech}
Let $F$ be a closed set in $X\in T_{3\frac 1 2}$. Then
$\beta(J_F(X))=J_{\overline F}(\beta X)$ where $\overline F$ denotes the closure of $F$ in $\beta X$.
\end{lemma}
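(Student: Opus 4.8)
The plan is to prove the two inclusions $\beta(J_F(X))\subseteq J_{\overline F}(\beta X)$ and $J_{\overline F}(\beta X)\subseteq \beta(J_F(X))$ separately, where throughout I identify $X$ with its image under $\triangle$ and regard $F\subseteq X\subseteq\beta X$ so that $\overline F=\Cl_{\beta X}F$. The only structural facts I need are that $f^\beta$ is continuous on $\beta X$ with $f^\beta|_X=f$, and that $\beta\colon C_b(X)\to C(\beta X)$ is a bijection.

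For the forward inclusion I would start with $f\in J_F(X)$, so $f\in C_b(X)$ and $f|_F\equiv 0$. Since $f^\beta|_X=f$ and $F\subseteq X$, the extension $f^\beta$ already vanishes on $F$. The key observation is then purely topological: the zero set $(f^\beta)^{-1}(\{0\})$ is closed in $\beta X$ because $f^\beta$ is continuous, and it contains $F$, hence it contains $\overline F=\Cl_{\beta X}F$. Therefore $f^\beta|_{\overline F}\equiv 0$, i.e.\ $\beta(f)=f^\beta\in J_{\overline F}(\beta X)$.

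For the reverse inclusion I would take $g\in J_{\overline F}(\beta X)$, so $g\in C(\beta X)$ with $g|_{\overline F}\equiv 0$. Because $\beta$ is surjective, $g=f^\beta$ for the unique $f:=g|_X\in C_b(X)$. Since $F\subseteq\overline F$ we have $g|_F\equiv 0$, and as $f=g|_X$ with $F\subseteq X$ this gives $f|_F=g|_F\equiv 0$, so $f\in J_F(X)$ and $g=\beta(f)\in\beta(J_F(X))$. Combining the two inclusions yields $\beta(J_F(X))=J_{\overline F}(\beta X)$.

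I do not expect any genuine obstacle here, which matches the paper's own remark that the proof is trivial. The single point demanding care is to invoke the defining property $f^\beta|_X=f$ of the Stone--\v Cech extension correctly, together with the elementary fact that the zero set of a continuous function is closed so that vanishing on $F$ forces vanishing on $\Cl_{\beta X}F$; note in particular that density of $X$ in $\beta X$ is not even needed for either inclusion, only continuity and the surjectivity of $\beta$.
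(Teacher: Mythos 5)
Your proof is correct and follows essentially the same route as the paper's: the forward inclusion via continuity of $f^\beta$ (vanishing on $F$ forces vanishing on $\Cl_{\beta X}F$), and the reverse inclusion by restricting a function in $J_{\overline F}(\beta X)$ back to $X$, using that $\beta$ is a bijection. Your version just spells out the zero-set argument and the role of surjectivity slightly more explicitly than the paper does.
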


\begin{proof}
If $f\in J_F(X)$, then $f\equiv 0$ on $F$, and hence by continuity $f^\beta\equiv 0$ on $\overline{F}$. For the opposite inclusion, pick $f^\beta\in J_{\overline{F}}(\beta X)$. Then $f\equiv 0$ on $F$, so that $f^\beta \in \beta(J_F(X)).$
\end{proof}

\begin{prop}\label{spust zaprtja notranjosti}
Let $X$ be a dense subset of a topological space $Y$. If a closed set $F\subseteq Y$ satisfies $F=\Cl_Y\Int_Y F$, then the closed set $F\cap X$ in $X$ satisfies $F\cap X=\Cl_X \Int_X (F\cap X).$
\end{prop}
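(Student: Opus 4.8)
The plan is to set $U=\Int_Y F$, so that the hypothesis reads $F=\Cl_Y U$, and to reduce everything to the behaviour of the open set $U$ under intersection with the dense subset $X$. Write $F'=F\cap X$ for the closed set in $X$ whose interior-closure we must compute. One inclusion is immediate: $\Int_X F'\subseteq F'$ and $F'$ is closed in $X$, so $\Cl_X\Int_X F'\subseteq F'$. The content is the reverse inclusion $F'\subseteq \Cl_X\Int_X F'$.

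First I would record two elementary relative-topology facts. Since $U$ is open in $Y$ and $U\subseteq F$, the set $U\cap X$ is open in $X$ and satisfies $U\cap X\subseteq F\cap X=F'$; as the interior of $F'$ in $X$ contains every open-in-$X$ subset of $F'$, this gives $U\cap X\subseteq \Int_X F'$, and hence $\Cl_X(U\cap X)\subseteq \Cl_X\Int_X F'$. Thus it suffices to prove $F'\subseteq \Cl_X(U\cap X)$.

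The key step is the density argument showing $\Cl_Y(U\cap X)=\Cl_Y U$. For any $u\in U$ and any open neighbourhood $V$ of $u$, the set $V\cap U$ is open and nonempty, so by density of $X$ it meets $X$, that is, it meets $U\cap X$; therefore $u\in \Cl_Y(U\cap X)$. Hence $U\subseteq \Cl_Y(U\cap X)$, and taking closures yields $\Cl_Y U\subseteq \Cl_Y(U\cap X)\subseteq \Cl_Y U$, i.e.\ equality. This is the only place where density is genuinely used, and it is the main (though mild) obstacle in the argument.

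Finally I would combine these, using the standard subspace relation $\Cl_X A=\Cl_Y A\cap X$ for $A\subseteq X$, applied to $A=U\cap X$, together with the previous equality and $F=\Cl_Y U$:
\[
\Cl_X(U\cap X)=\Cl_Y(U\cap X)\cap X=\Cl_Y U\cap X=F\cap X=F'.
\]
Then $F'=\Cl_X(U\cap X)\subseteq \Cl_X\Int_X F'\subseteq F'$, which forces $F'=\Cl_X\Int_X F'$, as desired.
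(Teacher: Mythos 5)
Your proof is correct and is essentially the paper's argument in a different packaging: where the paper argues pointwise that every neighborhood $V\cap X$ of a point of $F\cap X$ meets $\Int_Y F\cap X\subseteq \Int_X(F\cap X)$, you globalize the very same density step into the identity $\Cl_Y(\Int_Y F\cap X)=\Cl_Y \Int_Y F$ and then finish with the subspace closure formula $\Cl_X A=\Cl_Y A\cap X$. Both proofs invoke density exactly once and both hinge on the same observation that $\Int_Y F\cap X$ is open in $X$ and contained in $F\cap X$, hence lies in $\Int_X(F\cap X)$.
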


\begin{proof}
Suppose $F=\Cl_{Y}\Int_Y F$. Since $F\cap X$ is closed in $X$, we obviously have $\Cl_X\Int_X(F\cap X)\subseteq F\cap X$. To prove the opposite inclusion pick any $x\in F\cap X$. We will prove that an arbitrary open neighborhood in $X$ of $x$ intersects $\Int_X(F\cap X)$.

Let $U$ be an arbitrary open neighborhood in $X$ of $x$. There exists an open neighborhood $V$ in $Y$ of $x$ such that $U=V\cap X$. From
$F=\Cl_{Y}\Int_Y F$ it follows $V\cap \Int_Y F\neq \emptyset$, so that from the density of $X$ in $Y$ we conclude
$V\cap \Int_Y F\cap X\neq \emptyset$. Since $V\cap \Int_Y F\cap X$ is open in $X$ and is obviously contained in $F\cap X$, we have
$$\emptyset\neq U\cap \Int_Y F=V\cap \Int_Y F\cap X\subseteq \Int_X(F\cap X).$$
In particular, $U$ intersects $\Int_X(F\cap X)$ and the proof is finished.
\end{proof}

\Cref{spust zaprtja notranjosti} can be applied in two interesting situations when $X$ is locally compact Hausdorff space and $Y=X^+$ or $X\in T_{3\frac 1 2}$ and $Y=\beta X$. We will use \Cref{spust zaprtja notranjosti} for the latter situation in the proof of \Cref{bands in Tychonoff}.

%Indeed,
%If $f\in J_F( X)$, then $\widetilde f$ is zero on $F$. Continuity of $\widetilde f$ assures that $\widetilde f$ is zero on $\overline{F}.$

\begin{thm}\label{bands in Tychonoff}
Let $X\in T_{3\frac 1 2}$ and let $B$ be a closed ideal in $C_b(X)$
\begin{enumerate}
\item [(a)] $B$ is a band in $C_b(X)$ \Iff $B=J_F(X)$ for some closed set $F$ in $X$ with $F=\overline{\Int F}.$
\item [(b)] $B$ is a projection band \Iff $B=J_F(X)$ for some clopen subset $F$ of $ X.$
\end{enumerate}
\end{thm}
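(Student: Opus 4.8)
The plan is to transport everything to the compact Hausdorff space $\beta X$, where \Cref{CharOfBands} already settles the question, and then translate the resulting topological conditions back to $X$ by exploiting that $X$ is dense in $\beta X$. The workhorses are the isometric lattice isomorphism $\beta\colon C_b(X)\to C(\beta X)$, which identifies $B$ with $\beta(B)$ and carries bands to bands and projection bands to projection bands, together with \Cref{fixed in Stone-Chech}, which gives $\beta(J_F(X))=J_{\overline F}(\beta X)$ where $\overline F=\Cl_{\beta X}F$, and \Cref{spust zaprtja notranjosti}. Thus $B$ is a band (resp.\ projection band) in $C_b(X)$ \Iff $\beta(B)$ is a band (resp.\ projection band) in $C(\beta X)$, and the whole argument reduces to matching the set $F\subseteq X$ with a closed set $G\subseteq\beta X$.

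For the forward implication in (a), I would assume $B$ is a band, so that $\beta(B)$ is a band in $C(\beta X)$; by \Cref{CharOfBands}(a) there is a closed $G\subseteq\beta X$ with $G=\Cl_{\beta X}\Int_{\beta X}G$ and $\beta(B)=J_G(\beta X)$. Setting $F:=G\cap X$ (closed in $X$), an application of \Cref{spust zaprtja notranjosti} with $Y=\beta X$ gives $F=\Cl_X\Int_X F$, i.e.\ $F=\overline{\Int F}$. To identify $B=J_F(X)$ I would show $\overline F=G$: since $X$ is dense and $\Int_{\beta X}G$ is open, $\Int_{\beta X}G\cap X$ is dense in $\Int_{\beta X}G$, so taking closures in $\beta X$ and using $\Int_{\beta X}G\cap X\subseteq G\cap X\subseteq G$ yields $\Cl_{\beta X}(G\cap X)=G$, that is $\overline F=G$. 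Then $\beta(J_F(X))=J_{\overline F}(\beta X)=J_G(\beta X)=\beta(B)$ by \Cref{fixed in Stone-Chech}, and applying $\beta^{-1}$ gives $B=J_F(X)$.

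For the converse in (a), I would take $B=J_F(X)$ with $F=\overline{\Int F}$; by \Cref{fixed in Stone-Chech} we have $\beta(B)=J_{\overline F}(\beta X)$, so by \Cref{CharOfBands}(a) it suffices to verify $\overline F=\Cl_{\beta X}\Int_{\beta X}\overline F$. The nontrivial inclusion reduces to $\Int_X F\subseteq\Int_{\beta X}\overline F$: writing the open set $\Int_X F$ as $W\cap X$ with $W$ open in $\beta X$, density of $X$ makes $W\cap X$ dense in $W$, and from $W\cap X=\Int_X F\subseteq\overline F$ one gets $W\subseteq\Cl_{\beta X}(W\cap X)\subseteq\overline F$, hence $W\subseteq\Int_{\beta X}\overline F$; then $F=\Cl_X\Int_X F\subseteq\Cl_{\beta X}\Int_{\beta X}\overline F$, and taking closures gives the claim, so $\beta(B)$ and therefore $B$ is a band. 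I expect this passage from $F=\Cl_X\Int_X F$ in $X$ to the analogous identity for $\overline F$ in $\beta X$ to be the main obstacle: it is a kind of reverse of \Cref{spust zaprtja notranjosti} which that proposition does not supply directly, and it is exactly where one must use that $X$ is only dense, not open, in $\beta X$.

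Finally, for (b): if $F$ is clopen then $\chi_F\in C_b(X)$, and every $f\in C_b(X)$ splits as $f=f\chi_{X\setminus F}+f\chi_F$ with $f\chi_{X\setminus F}\in J_F(X)$ and $f\chi_F\in J_F(X)^d$, so $J_F(X)$ is a projection band, exactly as in \Cref{bands in c_0}(b). Conversely, a projection band is a band, so by (a) $B=J_F(X)$ with $F=\overline{\Int F}$ closed; invoking the disjoint complement formula $J_F(X)^d=J_{\overline{X\setminus F}}(X)$ recorded in \Cref{Section: order density}, the decomposition $C_b(X)=J_F(X)\oplus J_{\overline{X\setminus F}}(X)$ forces $X\setminus F$ to be closed, since otherwise a point $x_0\in F\cap\overline{X\setminus F}$ would make the constant function $1$ vanish at $x_0$, a contradiction; hence $F$ is clopen.
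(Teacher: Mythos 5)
Your proof is correct, but it diverges from the paper's at two key points, and the divergence is instructive. For the ``if'' direction of (a), the paper never returns to $\beta X$: it argues directly in $C_b(X)$ that when $F=\overline{\Int F}$ and $0\le f_\alpha\nearrow f$ with $f_\alpha\in J_F(X)$, order continuity of restriction to the open set $\Int F$ (\Cref{supst na odprto}) forces $f\equiv 0$ on $\Int F$, hence on $F$ by continuity. You instead prove a purely topological lifting lemma --- if $F=\Cl_X\Int_X F$ in the dense subspace $X$, then $\Cl_{\beta X}F=\Cl_{\beta X}\Int_{\beta X}\Cl_{\beta X}F$ --- and then quote \Cref{CharOfBands}(a) in the compact space; your density argument ($W\cap X$ dense in the open set $W$, hence $W\subseteq\Cl_{\beta X}(W\cap X)\subseteq\overline F$) is sound and is exactly the converse of \Cref{spust zaprtja notranjosti} that the paper does not state. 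Similarly, for the forward direction of (b) the paper pulls a clopen set down from $\beta X$ using \Cref{CharOfBands}(b) together with the identity $B=J_{F\cap X}(X)$ established inside the proof of (a), whereas you argue intrinsically in $X$ via the disjoint-complement formula $J_F(X)^d=J_{\overline{X\setminus F}}(X)$ from \Cref{Section: order density} and the constant function $\one$, mirroring the paper's own $C_0$-argument in \Cref{bands in c_0}(b). The trade-off: your route makes the entire theorem a topological corollary of the compact case (and your lifting lemma would be a reusable statement about arbitrary dense subspaces), while the paper's route exploits the order-continuity machinery that is its central theme and is shorter given that \Cref{supst na odprto} is already in hand; your version of (b) also has the merit of depending only on the statement of (a), not on the internals of its proof.
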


\begin{proof}
 (a) Suppose $B$ is a band in $C_b(X)$. By \Cref{bands in c_0} there exists a closed set $F\subseteq \beta X$ such that $F=\Cl_{\beta X}{\Int_{\beta X} F}$ and $\beta(B)=J_F(\beta X)$.

 We claim that $B=J_{F\cap  X}(X)$ and that $F\cap X=\Cl_X{\Int_X(F\cap  X)}$. Let us denote $B'=J_{F\cap X}(X)$. Then
 $\beta(B')=J_{\Cl_{\beta X}(F\cap X)}(\beta X)$. If we prove $\Cl_{\beta X}(F\cap X)=F$, then $\beta(B')=\beta(B)$, so that $B'=B$, and hence $B=J_{F\cap X}( X)$ as claimed.

 Since $F$ is closed in $\beta X$, we have $\Cl_{\beta X}(F\cap X)\subseteq F$. If $F\neq \Cl_{\beta X}(F\cap X)$, pick $x\in F\setminus \Cl_{\beta X}(F\cap X)$. There exists an open neighborhood $U$ of $x$ such that $U\cap (F\cap X)=\emptyset.$ Since $x\in F=\Cl_{\beta X}\Int_{\beta X} F$, the set $U\cap \Int_{\beta X} F$ is nonempty and open in $\beta X$. Density of $X$ in $\beta X$ implies $U\cap \Int_{\beta X} F\cap  X\neq \emptyset$ which is in contradiction with $U\cap F\cap  X=\emptyset$. This shows $F=\Cl_{\beta X}(F\cap X).$ To finish the proof of the forward implication we
apply \Cref{spust zaprtja notranjosti} for $Y=\beta X$ to obtain
$F\cap X=\Cl_X\Int_X(F\cap X)$.

Suppose now that $B=J_F(X)$ and $F$ is the closure of its interior in $X$. If $0\leq f_\alpha\nearrow f$ in $C_b(X)$ and $f_\alpha\in J_F(X)$ for each $\alpha$, then $f_\alpha|_{\Int F}\nearrow f|_{\Int F}$, by \Cref{supst na odprto}. Since $f_\alpha\equiv 0$ on $\Int F$, we have $f\equiv 0$ on $\Int F$, so that by continuity of $f$ we conclude $f\equiv 0$ on $\overline{\Int F}=F.$ Hence, $f\in J_F(X)$ and $B=J_F(X)$ is a band in $C_b(X)$.

(b) If $B$ is a projection band in $C_b(X)$, by \Cref{CharOfBands} there exists a clopen subset $F\subseteq \beta X$ such that
 $\beta(B)=J_F(\beta X)$. By the proof of (a) we know that $B=J_{F\cap X}(X)$. Since $F$ is clopen in $\beta X$, the definition of the relative topology on $X$ yields that $F\cap X$ is clopen in $X$.

 If $B=J_F(X)$ for some clopen subset $F$ of $X$, one can repeat the arguments of the proof of \Cref{bands in c_0}(b) for the appropriately chosen nonnegative function $f\in C_b(X)$ to see that $B$ is a projection band.
% We claim that $F\cap X$ is clopen in $X$.
%
% We claim that $B=J_{F\cap  X}( X)$.
%First, note that $F\cap  X$ is clopen in $ X$. To prove that $B=J_{F\cap X}( X)$, it suffices to see that $\Cl_{\beta X}(F\cap  X)=F.$ Obviously, $\Cl_{\beta X}(F\cap  X)\subseteq F$. Suppose $F\neq \Cl_{\beta X}(F\cap X)$ and pick $x\in F\setminus \Cl_{\beta X}(F\cap  X)$. Then there exists an open set $U$ which contains $x$ and $F\cap  X\cap U=\emptyset$. Since $F$ is clopen, $U\cap F$ is open. Density of $ X$ in $\beta X$ implies $(F\cap U)\cap X\ne\emptyset.$ This is in contradiction with $x\in F\cap U$.
\end{proof}

The most important ingredient of the proof of \Cref{bands in Tychonoff}(a) is that the set $F$ has an extremely large interior. If we start with a closed ideal $J$ that is not a band in $C_b(X)$, then again we conclude
$\beta(J)=J_F(\beta X)$ for some closed subset $F$ of $\beta X$.
However, the set $F$ can have a very small interior in $\beta X$. In the worst case $F\cap X$ can be empty and in this case we obviously cannot have $J=J_{F\cap X}(X)$ unless $J=C_b(X)$.

The critical step for investigating bands and projection bands in $C_b(X)$ where $X\in T_{3\frac 1 2}$ was the construction of $\beta X$ and the isomorphism between $C_b(X)$ and $C(\beta X)$. As we already mentioned, for a Hausdorff space $X$ there is a way how to construct a space $X/_\sim\in T_{3\frac 1 2}$ such that $C_b(X)\cong C_b(X/_\sim)$ and that the isomorphism in question is induced by a continuous surjective function between $\tau\colon X\to X/_\sim$.
Since in this section $X/_\sim$ and $\tau$ will be needed in details, we recall the construction.

On a Hausdorff space $X$ we define an equivalence relation $\sim$ as follows:
$x\sim y$ \Iff $f(x)=f(y)$ for all continuous function $f\colon X\to \RR$. By $\tau\colon X\to X/_\sim$ we denote the mapping which maps $x$ into its equivalence class $[x]\in X/_\sim$. Then for each $f\in C(X)$ there exists the unique function $\widetilde f\colon X/_\sim\to \mathbb R$ such that the following diagram commutes.
\begin{center}
%\begin{tikzcd}[sep=large]
%X \arrow[r,"f"] \arrow[rd, "\tau"'] & \RR\\
%& X/_\sim \arrow[u,dotted,"\widetilde{f}"']
%\end{tikzcd}
\begin{tikzpicture}[bij/.style={fill=white,inner sep=2pt}]
\matrix (m) [matrix of math nodes, row sep=3em,
column sep=3em, text height=1.5ex, text depth=0.25ex]
{ X & \RR \\
   & {X/_\sim} \\ };
\path[->]
(m-1-1) edge node[above] {$f$} (m-1-2)
(m-1-1) edge node[below] {$\tau$} (m-2-2);
\path[dashed,->]
(m-2-2) edge node[right] {$ \widetilde f $} (m-1-2);
\end{tikzpicture}
\end{center}
Indeed, we define $\widetilde f$ as $\widetilde f([x]):=f(x)$.
If $[x]=[x']$, then $x\sim x'$, so that $f(x)=f(x')$ as $f$ is continuous. This proves that $\widetilde f$ is well-defined and that $f=\widetilde f\circ \tau.$ Let $C'$ be the family
$\{\widetilde f:\, f\in C(X)\}$. By endowing $X/_\sim$ by the weak topology induced by $C'$, \cite[Theorem 3.7]{Gillman-Jerison:76} implies $X/_\sim\in T_{3 \frac 1 2}$ and that $\tau\colon X\to X/_\sim$ is continuous. If $X\notin T_{3\frac 1 2}$ is functionally Hausdorff, then $\tau$ is just the identity mapping and the topology on $X/_\sim$ is strictly weaker than the topology of $X$.

The mapping $\tau\colon X\to X/_\sim$ induces the isomorphism $\Phi_\tau\colon C(X/_\sim)\to C(X)$ defined by $f\mapsto f\circ \tau.$ By \cite[Theorem 1.6]{Gillman-Jerison:76} $\Phi_\tau$ is also a lattice isomorphism. By applying \cite[Theorem 1.9]{Gillman-Jerison:76}
the restriction of $\Phi_\tau$ to $C_b(X/_\sim)$ induces the isomorphism (which we denote again by $\Phi_\tau$) $\Phi_\tau\colon C_b(X/_\sim)\to C_b(X)$.

The following lemma explains how ideals of the form $J_F(X)$ are transformed by $\Phi_\tau$ and its inverse.

\begin{lemma}\label{fixed ideals transform}
Let $X$ be a Hausdorff space and $X/_\sim$ as before.
Then
\begin{enumerate}
\item [(a)] For  $F\subseteq X$ we have $\Phi_\tau^{-1}(J_F(X))=J_{\overline{\tau(F)}}(X/_\sim).$
\item [(b)] For $F\subseteq X/_\sim$ we have $\Phi_\tau(J_F(X/_\sim))=J_{\tau^{-1}(F)}(X)$.
\end{enumerate}
\end{lemma}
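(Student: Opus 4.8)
The plan is to prove both identities by directly unwinding the defining formula $\Phi_\tau(\widetilde f)=\widetilde f\circ\tau$, exploiting three facts already available: that $\Phi_\tau$ is a bijection onto $C_b(X)$, that $\tau\colon X\to X/_\sim$ is surjective, and that $J_G(Y)=J_{\overline G}(Y)$ for every subset $G$ of a space $Y$ (recorded in the preliminaries). Neither part requires any topological input beyond continuity of the functions involved; everything reduces to chasing where a point goes under $\tau$.

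For (a) I would begin from the tautological description
$\Phi_\tau^{-1}(J_F(X))=\{\widetilde g:\ \Phi_\tau(\widetilde g)\in J_F(X)\}$,
which makes sense because $\Phi_\tau$ is onto. Now $\Phi_\tau(\widetilde g)=\widetilde g\circ\tau$ vanishes identically on $F$ precisely when $\widetilde g(\tau(x))=0$ for every $x\in F$, that is, precisely when $\widetilde g$ vanishes on the image set $\tau(F)$. Hence $\Phi_\tau^{-1}(J_F(X))=J_{\tau(F)}(X/_\sim)$, and invoking the elementary identity $J_{\tau(F)}(X/_\sim)=J_{\overline{\tau(F)}}(X/_\sim)$ (closure taken in $X/_\sim$) yields the stated formula.

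For (b) I would argue by double inclusion. For the forward inclusion, take $\widetilde g\in J_F(X/_\sim)$; if $x\in\tau^{-1}(F)$ then $\tau(x)\in F$, so $(\widetilde g\circ\tau)(x)=\widetilde g(\tau(x))=0$, giving $\Phi_\tau(\widetilde g)\in J_{\tau^{-1}(F)}(X)$. For the reverse inclusion, take $g\in J_{\tau^{-1}(F)}(X)$ and write $g=\Phi_\tau(\widetilde g)$ using surjectivity of $\Phi_\tau$; to see $\widetilde g\in J_F(X/_\sim)$, fix $[x]\in F$, use surjectivity of $\tau$ to choose $x'\in X$ with $\tau(x')=[x]$ (so that $x'\in\tau^{-1}(F)$), and compute $\widetilde g([x])=\widetilde g(\tau(x'))=g(x')=0$.

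I do not expect a serious obstacle; the result is a routine verification. The only two points that warrant a moment's care are the appeal to surjectivity of $\tau$ in the reverse inclusion of (b) --- without it one could not conclude that $\widetilde g$ vanishes on \emph{all} of $F$, only on $\tau(\tau^{-1}(F))$ --- and, in (a), the passage from $\tau(F)$ to its closure, which is exactly the identity $J_G=J_{\overline G}$ combined with the continuity of $\widetilde g$.
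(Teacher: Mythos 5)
Your proposal is correct and follows essentially the same route as the paper's proof: a pointwise chase through $\Phi_\tau(\widetilde f)=\widetilde f\circ\tau$, passing from $\tau(F)$ to $\overline{\tau(F)}$ via continuity (the identity $J_G=J_{\overline G}$) in (a), and using $\tau(\tau^{-1}(F))=F$, i.e.\ surjectivity of $\tau$, for the reverse inclusion in (b). The only cosmetic difference is that you phrase (a) as a single ``iff'' computation where the paper writes out the two inclusions separately.
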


\begin{proof}
(a) Take a function $f\in J_F(X)$. Then $\widetilde f\circ \tau=f$ implies that $\widetilde f\equiv 0$ on $\tau(F)$. By continuity we conclude $\widetilde f\equiv 0$ on $\overline{\tau(F)}$, so that
$\Phi_\tau^{-1}(f)=\widetilde f\in J_{\overline{\tau(F)}}(X/_\sim).$
Conversely, if $\widetilde f\in J_{\overline{\tau(F)}}(X/_\sim)$, then $f=\widetilde f\circ \tau\equiv 0$ on $F$.

(b) If $f\in \Phi_\tau(J_F(X/_\sim))$, then $f=\widetilde f\circ \tau$ for some $\widetilde f\in J_F(X/_\sim)$. Pick $x\in \tau^{-1}(F)$. Then $\tau(x)\in F$ and $f(x)=\widetilde f(\tau(x))=0$; hence $f\in J_{\tau^{-1}(F)}(X)$.  Conversely,  choose  $f\in J_{\tau^{-1}(F)}(X)$. Then $f=\widetilde f\circ \tau=\Phi_\tau(\widetilde f)$. Since $f|_{\tau^{-1}(F)}\equiv 0$ and $\tau(\tau^{-1}(F))=F$ we have
$\widetilde f\equiv 0$ on $F$, and therefore  $f\in \Phi_\tau(J_F(X/_\sim))$.
\end{proof}

\begin{thm}\label{proj band in hausdorff}
Let $X$ be a Hausdorff space. A closed ideal $J\subseteq C_b(X)$ is a projection band \Iff $J=J_F(X)$ for some clopen set $F\subseteq X$.
\end{thm}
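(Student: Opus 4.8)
The plan is to reduce the statement to the already-settled $T_{3\frac12}$ case, namely \Cref{bands in Tychonoff}(b), by transporting everything through the isometric lattice isomorphism $\Phi_\tau\colon C_b(X/_\sim)\to C_b(X)$ coming from the construction recalled above. The key structural fact is that $\Phi_\tau$ is a lattice isomorphism, so it carries projection bands to projection bands in both directions; combined with $X/_\sim\in T_{3\frac12}$, this lets me read off the form of $\Phi_\tau^{-1}(J)$ from \Cref{bands in Tychonoff}(b) and then push it back to $X$ via \Cref{fixed ideals transform}(b). The two halves of the equivalence are then almost independent: the forward direction is the transfer argument, while the reverse direction is a short, purely algebraic splitting that needs no separation hypothesis at all.

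For the forward implication I would argue as follows. Suppose $J$ is a projection band in $C_b(X)$. Since $\Phi_\tau$ is a lattice isomorphism, $\Phi_\tau^{-1}(J)$ is a projection band in $C_b(X/_\sim)$. As $X/_\sim\in T_{3\frac12}$, \Cref{bands in Tychonoff}(b) yields a clopen set $G\subseteq X/_\sim$ with $\Phi_\tau^{-1}(J)=J_G(X/_\sim)$. Applying $\Phi_\tau$ and invoking \Cref{fixed ideals transform}(b) gives
\[
J=\Phi_\tau\bigl(J_G(X/_\sim)\bigr)=J_{\tau^{-1}(G)}(X).
\]
Setting $F=\tau^{-1}(G)$ completes this direction: continuity of $\tau$ guarantees that the preimage of the clopen set $G$ is clopen in $X$, and crucially $F$ genuinely lives in $X$ rather than in some compactification, which is exactly the improvement over the description available for general closed ideals.

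The reverse implication is the easy half and requires neither $T_{3\frac12}$ nor the passage to $X/_\sim$. Assuming $J=J_F(X)$ with $F$ clopen, the characteristic functions $\chi_F$ and $\chi_{X\setminus F}$ both lie in $C_b(X)$, so every $f\in C_b(X)$ decomposes as $f=f\chi_{X\setminus F}+f\chi_F$ with $f\chi_{X\setminus F}\in J_F(X)$ and $f\chi_F\in J_F(X)^d$; hence $C_b(X)=J_F(X)\oplus J_F(X)^d$ and $J$ is a projection band. This is verbatim the computation already performed in \Cref{bands in c_0}(b) and \Cref{bands in Tychonoff}(b), so I would simply refer to it.

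I do not expect a genuine obstacle here: once the isomorphism $\Phi_\tau$ and the transfer \Cref{fixed ideals transform} are in place, the argument is bookkeeping, with all the topological content having been absorbed into \Cref{bands in Tychonoff} and into the construction of $X/_\sim$. The only points that deserve explicit mention are that $\Phi_\tau$ is a \emph{lattice} isomorphism (so that the notion of projection band is preserved) and that the resulting set $F=\tau^{-1}(G)$ is clopen in $X$ by continuity of $\tau$.
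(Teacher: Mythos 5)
Your proof is correct, and your forward direction coincides with the paper's: transfer $J$ through the lattice isomorphism $\Phi_\tau$, apply \Cref{bands in Tychonoff}(b) to the projection band $\Phi_\tau^{-1}(J)$ in $C_b(X/_\sim)$ with $X/_\sim\in T_{3\frac{1}{2}}$, and pull back with \Cref{fixed ideals transform}(b), noting that $\tau^{-1}(G)$ is clopen by continuity of $\tau$. You diverge only in the converse. The paper keeps everything as a chain of equivalences through $X/_\sim$, concluding that $J$ is a projection band \Iff $J=J_{\tau^{-1}(G)}(X)$ for some clopen $G\subseteq X/_\sim$; matched against the statement of the theorem, that formulation tacitly uses the fact that every clopen $F\subseteq X$ arises this way (true, since $\chi_F$ is continuous, hence $\chi_F=\widetilde{\chi_F}\circ\tau$ and $F=\tau^{-1}\bigl(\widetilde{\chi_F}^{-1}(\{1\})\bigr)$ with $\widetilde{\chi_F}^{-1}(\{1\})$ clopen). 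Your converse avoids this descent altogether by splitting $f=f\chi_{X\setminus F}+f\chi_F$ directly in $C_b(X)$, exactly as in \Cref{bands in c_0}(b); this is marginally more self-contained, and appropriately so, since for a general Hausdorff $X$ one cannot fall back on a band characterization like \Cref{bands in Tychonoff}(a) (it fails by \Cref{counterex}). The one point worth making explicit in your version: the splitting gives $C_b(X)=J_F(X)\oplus J_F(X)^d$, and one should note that $J_F(X)$ is then genuinely a band --- either because any ideal $J$ with $E=J\oplus J^d$ satisfies $J=J^{dd}$, or directly because $J_F(X)=J_{X\setminus F}(X)^d$ when $F$ is clopen --- so that it qualifies as a projection band in the sense defined in the paper.
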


\begin{proof}
Obviously, $J$ is a projection band in $C_b(X)$ \Iff $\Phi_\tau^{-1}(J)$ is a projection band in $C_b(X/_\sim)$. Therefore, by \Cref{bands in Tychonoff} $J$ is a projection band in $C_b(X)$ \Iff there exists a clopen set $F\subseteq X/_\sim$ such that $\Phi_\tau^{-1}(J)=J_F(X/_\sim)$. Therefore $J$ is a projection band \Iff it is of the form
$$J=\Phi_\tau(J_F(X/_\sim))=J_{\tau^{-1}(F)}(X)$$ for some clopen set $F$ in $X/_\sim$.  Since $\tau$ is continuous, $\tau^{-1}(F)$ is clopen and the proof is finished.
\end{proof}

The natural question that arises here is whether bands in $C_b(X)$ where $X$ is Hausdorff are of the form $J_F(X)$ where $F=\overline{\Int F}.$ In general, the answer is no.

\begin{ex}\label{counterex}
Let $\tau_e$ be the Euclidean topology on $\RR$.
Let $X=\RR$ be equipped with topology $\tau$ generated by ${\mathcal B}=\tau_e\cup \{U\setminus N:\; U\in\tau_e\}$,
where $N=\bigcup_{n=1}^\infty [\tfrac 1{2n},\tfrac 1{2n-1}]$. The topology $\tau$ is stronger than $\tau_e$ and since $\RR$ is a functionally Hausdorff space, $X$ is functionally Hausdorff as well.
Therefore, we have $X/_\sim=\RR$ as sets.

\begin{itemize}
  \item \emph{The topology on $X/_\sim$ is stronger than the Euclidean topology.}
  \end{itemize}
  Indeed, let $F\subseteq \RR$
be closed (in the Euclidean topology). Then for every $x\not\in F$ there
exists a continuous function $\varphi\colon\RR\to\RR$ such that $\varphi(x)=1$ and $\varphi(F)=0$.
Since $\varphi\colon X\to\RR$ is continuous and
$x\in\{y\in X:\; |\varphi(x)-\varphi(y)|<1\}\subseteq X\setminus F$,
$X\setminus F$ is open, and hence $F$ is a closed subset of $X/_\sim$.
\begin{itemize}
  \item \emph{The space $X$ is not $T_3$.}
  \end{itemize}
 Indeed, by definition the set $N$ is closed in $X$ and $0\not\in N$ .
Let $U,V\subseteq X$ be open sets such that $0\in U$ and  $F\subseteq V$. For every $n\in\NN$, $\tfrac 1{2n}\in V$
hence $(\tfrac 1{2n}-r_n,\tfrac 1{2n}+r_n)\subseteq V$ for some $r_n>0$. So there exists $x_n\in (\tfrac 1{2n+1},\tfrac1 {2n})\cap V$.
The sequence $(x_n)$ lies in $\RR\setminus N$ and $\inf x_n=0$, hence $\emptyset \ne U\cap\{x_n:\; n\in\NN\}\subset U\cap V$.
Therefore we can not separate the point $0$ from the closed set $N$. We actually proved $0\in \Cl_{X/_\sim} N$ and similarly, one can prove $0\in \Cl_{X/_\sim}(\bigcup_{n=1}^\infty (\tfrac 1{2n},\tfrac 1{2n-1}))$.

Because $\RR\setminus \{0\}$ and $X\setminus \{0\}$ are homeomorphic they are also homeomorphic to $X/_\sim\setminus \{0\}$.
Hence $\Cl_{\RR} A$, $\Cl_{X} A$ and $\Cl_{X/_\sim} A$ may differ only for the point $0$. The same is true for interiors of the given set.

\begin{itemize}
  \item \emph{$\Cl_{X/_\sim}\Int_{X/_\sim} F=F$ where $F=N\cup\{0\}.$}
  \end{itemize}
 Since $F$ is closed in $\RR$ and since  topologies on $X$ and $X/_\sim$ are stronger than the Euclidean topology,
$F$ is also closed in both $X$ and $X/_\sim$. Therefore we have
$\Cl_{X/_\sim}\Int_{X/_\sim} F \subseteq F.$ Due to the above remark and the fact that $\Int_\RR F=\bigcup_{n=1}^\infty (\tfrac 1{2n},\tfrac 1{2n-1})$,
we have $\Int_X F=\Int_{X/_\sim}F=\bigcup_{n=1}^\infty (\tfrac 1{2n},\tfrac 1{2n-1})$ from where it follows that
$$F\setminus\{0\}=N=\bigcup_{n=1}^\infty [\tfrac 1{2n},\tfrac 1{2n-1}]\subseteq \Cl_{X/_\sim}\Int_{X/_\sim} F.$$
 %Indeed, pick an arbitrary continuous function $\varphi\colon X\to\RR$ and $\varepsilon>0$.
%The set $\{y\in X_\sim: |\varphi(y)-\varphi(0)|<\varepsilon\}$
%is not a subset of $F$, hence $0\not\in\Int_{X/_\sim} F$.

Because $\RR\setminus N$ is open in $X$ and $(\RR\setminus N)\cap \Int_X F=\emptyset$, we have $0\notin \Cl_X\Int_X F$, and hence
$$\Cl_X\Int_X F=F\setminus \{0\}\subseteq \Cl_{X/_\sim}\Int_{X/_\sim} F\subseteq \Cl_\RR\Int_\RR F=F.$$
Since $0\in \Cl_{X/_\sim} \Int_{X_\sim} F$, we finally conclude $\Cl_{X/_\sim}\Int_{X/_\sim} F=F$.

\begin{itemize}
  \item \emph{$J_F(X)$ is a band in $C_b(X/_\sim)$; yet $\Cl_X \Int_XF=F\setminus \{0\}.$ }
  \end{itemize}
The set $F$ is closed in $X$ and $X/_\sim$. The mapping $\tau$ is the identity mapping as $X$ is functionally Hausdorff. Since the closed ideal $J_F(X)$ satisfies $$\Phi_\tau^{-1}(J_F(X))=J_{\tau(F)}(X/_\sim)=J_F(X/_\sim)$$
and since $\Cl_{X/_\sim}\Int_{X/_\sim} F=F$, we conclude that $J_F(X/_\sim)$ is a band in $C_b(X/_\sim)$. Therefore, $J_F(X)$ is band in $C_b(X)$, yet
$\Cl_X\Int_XF=F\setminus \{0\}.$
\end{ex}

The topological space constructed in \Cref{counterex} shows us that there are examples of spaces which satisfy weaker separation axioms than $T_{3\frac 1 2}$ and at the same time conclusions of \Cref{supst na odprto} still hold.

\begin{ex}\label{Spust na TFC}
Let $\tau_e$ be the Euclidean topology on $\RR$.
Let $X=\RR$ be equipped with topology $\tau$ generated by ${\mathcal B}=\tau_e\cup \{U\setminus N:\; U\in\tau_e\}$,
where $N=\bigcup_{n=1}^\infty [\tfrac 1{2n},\tfrac 1{2n-1}]$.
By \Cref{counterex} we already know that $X$ is functionally Hausdorff while it is not $T_{3\frac 1 2}$. Take an arbitrary open set $A$ in $X$ and assume $0\leq f_\alpha\nearrow f$ in $C(X)$. We claim that $f_\alpha|_A\nearrow f|_A$ in $C(A)$.
Suppose there is $g:A\to \RR$ such that $f_\alpha|_A\leq g$ and $g<f|_A.$

Suppose first that there is $0\neq x\in A$ with $g(x)<f(x)$. Then there exists $r>0$ such that $(x-r,x+r)\subseteq A$ and $g(y)<f(y)$ for each $y\in (x-r,x+r)$. Take any function $\varphi:\RR\to [0,1]$ which is continuous with respect to the Euclidean topology $\tau_e$ and satisfies $\varphi(x)=1$ and $\varphi\equiv 0$ on $\mathbb R\setminus (x-r,x+r)$. Since $\tau$ is stronger than $\tau_e$, the function $\varphi:X\to [0,1]$ is also continuous. Then $g\leq f|_A-\varphi|_A(f|_A-g)$
and $f_\alpha\leq f-\varphi(f-g)<f$ for each $\alpha$
where $\varphi(f-g)$ is defined as
$$\varphi(f-g)(x)=\left\{\begin{array}{ccl}
 \varphi(x)(f(x)-g(x))& : &x\in A\\
 0& : &x\in  X\setminus [x-\tfrac{r}{2},x+\tfrac{r}{2}]
 \end{array}\right..$$ This is in contradiction with $f_\alpha\nearrow f$.

If $0\in A$ and $g(0)<f(0)$, then $g(y)<f(y)$ on some open neighborhood $U$ of $0\in X$, so that  $g(y)<f(y)$ on some open neighborhood of $y_0\in A\setminus \{0\}$ as well. By the previous paragraph this is impossible, and hence $f_\alpha|_A\nearrow  f|_A.$
\end{ex}

\begin{quest}
Is there an example of a functionally Hausdorff space $X$, an open set $A$ in $X$ and a net $(f_\alpha)$ in $C(X)$ such that $f_\alpha\nearrow f$ in $C(X)$ but $f_\alpha|_A\nearrow  f|_A$ does not hold in $C(A)$?
\end{quest}

If $X\in T_{3\frac 1 2}$, the mapping $F\mapsto J_F(X)$ is a one-to-one mapping on the set of all closed subsets of $X$.
%correspondence between the set of all closed subsets of $X$ and the set of all fixed ideals of $C(X)$. Indeed, if $F_1\neq F_2$, then there exists either $x\in F_1\setminus F_2$ or $x\in F_2\setminus F_1.$ Since $X$ is Tychonoff, there is $f\in C(X)$ such that $f|_{F_1}\equiv 0$ and $f(x)=1$. This shows $J_{F_1}\neq J_{F_2}.$
If $X$ is merely a Hausdorff space, then it is possible that different closed sets induce the same fixed ideal. However, for a set $F\subseteq X$ there is the largest closed set $F'$ such that $J_F(X)=J_{F'}(X)$.

\begin{thm}\label{fixed ideals with F}
Let $X$ be a Hausdorff space and $F$ a subset of $X$. Then $\tau^{-1}(\overline{\tau(F)})$ is the largest closed set $F'$ with the property $J_F(X)=J_{F'}(X).$ Furthermore, we have
$$J_F(X)=J_{\overline{\tau^{-1}(\tau(F))}}(X)=J_{\tau^{-1}(\overline{\tau(F)})}(X).$$
\end{thm}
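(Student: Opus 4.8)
The plan is to prove three things: that $F':=\tau^{-1}(\overline{\tau(F)})$ is a closed set, that it satisfies the two displayed equalities $J_F(X)=J_{\overline{\tau^{-1}(\tau(F))}}(X)=J_{F'}(X)$, and finally that $F'$ is the largest closed set producing the ideal $J_F(X)$. The closedness of $F'$ is immediate: $\overline{\tau(F)}$ is closed in $X/_\sim$ and $\tau$ is continuous, so $F'=\tau^{-1}(\overline{\tau(F)})$ is closed in $X$. The two genuinely substantive ingredients are a \emph{saturation identity} and the transformation rule \Cref{fixed ideals transform}.

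For the middle equality I would first establish $J_F(X)=J_{\tau^{-1}(\tau(F))}(X)$. The inclusion $J_{\tau^{-1}(\tau(F))}(X)\subseteq J_F(X)$ holds because $F\subseteq \tau^{-1}(\tau(F))$. For the reverse, take $f\in J_F(X)$ and $x\in \tau^{-1}(\tau(F))$; then $\tau(x)=\tau(y)$ for some $y\in F$, i.e. $x\sim y$, so by the very definition of $\sim$ we get $f(x)=f(y)=0$. Hence $f$ vanishes on all of $\tau^{-1}(\tau(F))$. Combining this with the elementary fact $J_A(X)=J_{\overline A}(X)$, applied to $A=\tau^{-1}(\tau(F))$, yields $J_F(X)=J_{\tau^{-1}(\tau(F))}(X)=J_{\overline{\tau^{-1}(\tau(F))}}(X)$.

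For the right-hand equality I would push $\Phi_\tau$ through \Cref{fixed ideals transform}. Part (a) gives $\Phi_\tau^{-1}(J_F(X))=J_{\overline{\tau(F)}}(X/_\sim)$; applying $\Phi_\tau$ and then part (b) to the subset $\overline{\tau(F)}\subseteq X/_\sim$ gives $J_F(X)=\Phi_\tau\bigl(J_{\overline{\tau(F)}}(X/_\sim)\bigr)=J_{\tau^{-1}(\overline{\tau(F)})}(X)=J_{F'}(X)$, which is exactly the claimed identity.

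Finally, for maximality, suppose $G\subseteq X$ is closed with $J_G(X)=J_F(X)$. Applying $\Phi_\tau^{-1}$ and \Cref{fixed ideals transform}(a) to both sides produces $J_{\overline{\tau(G)}}(X/_\sim)=J_{\overline{\tau(F)}}(X/_\sim)$. Since $X/_\sim\in T_{3\frac 1 2}$, the assignment $H\mapsto J_H(X/_\sim)$ is injective on closed subsets of $X/_\sim$, so $\overline{\tau(G)}=\overline{\tau(F)}$. Then $G\subseteq \tau^{-1}(\tau(G))\subseteq \tau^{-1}(\overline{\tau(G)})=\tau^{-1}(\overline{\tau(F)})=F'$, as required. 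The delicate point on which everything hinges is that $X/_\sim$ is $T_{3\frac 1 2}$, so that fixed ideals there are determined by their closed zero sets; the remaining work is bookkeeping with preimages and closures, and the main obstacle is simply to keep the two directions of the transformation rule and the injectivity statement on $X/_\sim$ correctly aligned.
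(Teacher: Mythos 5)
Your proposal is correct and takes essentially the same route as the paper: the same saturation identity $J_F(X)=J_{\tau^{-1}(\tau(F))}(X)$ derived from the definition of $\sim$, the same use of \Cref{fixed ideals transform} to move between $X$ and $X/_\sim$, and a maximality step resting on complete regularity of $X/_\sim$. The only cosmetic difference is that where the paper re-runs the separating-function contradiction inline, you instead cite the injectivity of $H\mapsto J_H(X/_\sim)$ on closed subsets of $X/_\sim$ --- a fact the paper established earlier by exactly that argument --- so nothing is missing.
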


\begin{proof}
%First we claim that for a given set $G$ of any topological space $Y$ we have $J_G(Y)=J_{\overline G}(Y)$. Indeed, Since $G\subseteq \overline{G}$, we have $J_{\overline G}(Y)\subseteq J_G(Y)$. If $f\in J_G(Y)$, then $f\equiv 0$ on $G$, and due to continuity $f\equiv 0$ on $\overline G$.
%%This proves the claim.
%
%If we take $G=\tau(F)$ and $Y=X/_\sim$, we obtain
From $J_{\tau(F)}(X/_\sim)=J_{\overline{\tau(F)}}(X/_\sim)$ and  \Cref{fixed ideals transform} we conclude
$$J_{\overline{\tau^{-1}(\tau(F))}}(X)=J_{\tau^{-1}(\tau(F))}(X)=
J_{\tau^{-1}(\overline{\tau(F)})}(X).$$
We claim $J_F(X)=J_{\tau^{-1}(\tau(F))}(X)$. Note first that $J_{\tau^{-1}(\tau(F))}(X)\subseteq J_F(X)$ follows from the fact $F\subseteq \tau^{-1}(\tau(F)).$ For the opposite inclusion, take $f\in J_F(X)$ and $x\in \tau^{-1}(\tau(F))$. Then $\tau(x)\in \tau(F)$, so that there is $y\in F$ with $[x]=[y]$. By  definition of $\sim$ we have $f(x)=f(y)=0$. This gives $f\equiv 0$ on $\tau^{-1}(\tau(F))$ which proves the claim.

The only thing that remains to be proved is that $\tau^{-1}(\overline{\tau(F)})$ is the largest closed set $G$ with the property $J_F(X)=J_G(X)$. In order to prove this, let $G$ be an arbitrary closed set with $J_G(X)=J_F(X)$.
Then $J_{\tau(G)}(X/_\sim)=J_{\tau(F)}(X/_\sim)$.
Pick $x\in G$ and assume $\tau(x)\notin \overline{\tau(F)}$. Since $X/_\sim\in T_{3\frac 1 2}$, there is $\widetilde f\in C_b(X/_\sim)$ such that
$\widetilde f(\tau(x))=1$ and $\widetilde f\equiv 0$ on $\overline{\tau(F)}.$  By \Cref{fixed ideals transform} we have $f=\Phi_\tau(\widetilde f)\in J_{\tau^{-1}(\overline{\tau(F)})}(X)=J_F(X)=J_G(X)$. Since $x\in G$ we have $f(x)=0$ which is a contradiction.
\end{proof}

%Among all Hausdorff space completely regular spaces are precisely those with the property that the mapping $F\mapsto J_F(X)$ is one-to-one on the family of all closed subsets.

\begin{cor}
A Hausdorff space $X$ is $T_{3\frac 1 2}$ \Iff $J_F(X)=J_G(X)$ for closed subsets $F$ and $G$ implies $F=G$.
\end{cor}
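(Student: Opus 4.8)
The plan is to use \Cref{fixed ideals with F} as the main engine, so that both implications reduce to understanding when the largest closed set $\tau^{-1}(\overline{\tau(F)})$ inducing $J_F(X)$ coincides with $F$ itself. The forward implication requires no new work: it was already established in \Cref{Section: order density} that for $X\in T_{3\frac 1 2}$ and closed sets $F,G$ one has $J_F(X)=J_G(X)$ \Iff $F=G$, via a separating-function argument, so I would simply invoke that.

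For the converse I would first show that the injectivity hypothesis forces $F=\tau^{-1}(\overline{\tau(F)})$ for every closed $F\subseteq X$. Indeed, by \Cref{fixed ideals with F} the set $\tau^{-1}(\overline{\tau(F)})$ is closed and satisfies $J_F(X)=J_{\tau^{-1}(\overline{\tau(F)})}(X)$, so the hypothesis immediately yields equality of the two closed sets. I would then apply this to singletons: taking $F=\{x\}$, which is closed since $X$ is Hausdorff, the chain $[x]=\tau^{-1}(\tau(\{x\}))\subseteq \tau^{-1}(\overline{\tau(\{x\})})=\{x\}$ shows that each equivalence class $[x]$ reduces to $\{x\}$. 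Hence $\sim$ is trivial, $X$ is functionally Hausdorff, and $\tau\colon X\to X/_\sim$ is the identity on underlying sets.

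With $\tau$ a set-theoretic bijection, $\tau^{-1}(\overline{\tau(F)})$ is precisely the closure $\Cl_{X/_\sim}F$, so the identity $F=\tau^{-1}(\overline{\tau(F)})$ says that every set closed in $X$ is already closed in $X/_\sim$; that is, the topology of $X/_\sim$ is no coarser than that of $X$. On the other hand, the topology of $X/_\sim$ is by construction the weak topology generated by $C(X)$, hence no finer than that of $X$, since every $f\in C(X)$ is continuous for the original topology. The two topologies therefore coincide, and as $X/_\sim$ is always $T_{3\frac 1 2}$, we conclude $X=X/_\sim\in T_{3\frac 1 2}$, completing the converse.

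The step I expect to be the main obstacle is this final one: one must recognize that $X/_\sim$ always carries the weak topology induced by $C(X)$, so that the injectivity hypothesis squeezes the (a priori coarser) quotient topology up against the original topology, forcing them to agree and transporting the $T_{3\frac 1 2}$ property of $X/_\sim$ back to $X$. Everything before that is bookkeeping with \Cref{fixed ideals with F}.
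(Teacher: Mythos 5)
Your proposal is correct and takes essentially the same route as the paper: both directions rest on \Cref{fixed ideals with F} together with the facts that singletons are closed, that $X/_\sim$ is always $T_{3\frac 1 2}$, and that its topology is no finer than that of $X$. The only difference is organizational — the paper proves the converse by contradiction, splitting into the functionally Hausdorff and non-functionally Hausdorff cases, while you run the same ingredients directly (singletons force functional Hausdorffness, then the closed-set identity $F=\tau^{-1}(\overline{\tau(F)})$ forces the two topologies to coincide).
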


\begin{proof}
If $X\in T_{3 \frac 1 2}$, then the conclusion follows from the second paragraph of \Cref{Section: order density}.

Suppose now that $X\notin T_{3 \frac 1 2}$. If $X$ is functionally Hausdorff, then $X/_\sim=X$ as sets and $\tau$ is the identity mapping. Since topology of $X/_\sim$ is strictly weaker than the topology of $X$, $\tau$ is not a closed mapping. Hence, there exists a closed set $F$ in $X$ which is not closed in $X/_\sim$. Since $\tau(F)$ is not closed in $X/_\sim$, $F$ is a proper subset of $\tau^{-1}(\overline{\tau(F)})$. By the assumption we have $J_F(X)\neq J_{\tau^{-1}(\overline{\tau(F)})}(X)$ which contradicts \Cref{fixed ideals with F}.

If $X$ is not functionally Hausdorff,
there are two different points $x$ and $x'$ which cannot be separated by continuous functions. Then $J_{\{x\}}(X)=J_{\{x'\}}(X)$. On the other hand, the assumption on the uniqueness  implies $x=x'$. Again a contradiction.
\end{proof}

We already mentioned that order complete vector lattices posses the projection property. It is well-known that the converse statement does not hold. Indeed, the set $E$ of all real bounded functions on $[0,1]$ assuming only finitely many different values becomes a vector lattice when ordered pointwise. This vector lattice is not order complete, yet it has the projection property
(see e.g. \cite[Exercise 12.6.(ii)]{Zaanen:96}). Order complete vector lattices are also uniformly complete (see e.g. \cite{Aliprantis:03}). Although neither projection property nor uniform completeness imply order completeness of a given vector lattice, together they do \cite[Theorem 1.59]{Aliprantis:03}. Since by \cite[Proposition 1.1.8]{Meyer-Nieberg:91} Banach lattices are uniformly complete, a given Banach lattice has the projection property \Iff it is order complete.
If we assume that in a given Banach lattice every closed ideal is a band, then the norm on is order continuous. Actually, there is an equivalence between the last two statements
(see e.g. \cite[Theorem 17.17]{Zaanen:96}).
In the following result we determine when $C_b(X)$ is order continuous.
It turns out that whenever $X$ is Hausdorff then $C_b(X)$ is order continuous \Iff $C_b(X)$ is finite-dimensional.

\begin{cor}\label{closed ideals bands functionally hasudorff}
Let $X$ be a Hausdorff space with the property that every closed ideal in $C_b(X)$ is a band. Then the following assertions hold.
\begin{enumerate}
\item [(a)] $X$ is functionally Hausdorff \Iff $X$ is finite.
\item [(b)] $C_b(X)$ is finite-dimensional.
\end{enumerate}
\end{cor}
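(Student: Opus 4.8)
The plan is to exploit the hypothesis that every closed ideal in $C_b(X)$ is a band, which by the discussion preceding the statement is equivalent to the norm on $C_b(X)$ being order continuous, and ultimately to show this forces $C_b(X)$ to be finite-dimensional. I would prove (b) first and then deduce the hard direction of (a) from it, since the two assertions are closely linked: once $C_b(X)$ is finite-dimensional, $X$ can carry only finitely many distinct values under continuous functions, and functional Hausdorffness then pins down $X$ itself to be finite. The trivial direction of (a), that a finite $X$ is functionally Hausdorff, is immediate since finite functionally Hausdorff spaces separate points trivially; indeed any finite $T_1$-ish space works, but I only need the reverse implication to carry weight.

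The core of (b) is to show that order continuity of the norm on $C_b(X)$ prevents infinitely many disjoint positive elements. The strategy is to pass to $X/_\sim$ and $\beta(X/_\sim)$ via the isomorphisms $\Phi_\tau$ and $\beta$ recalled in \Cref{C_b}, so that without loss of generality we may study $C(K)$ for a compact Hausdorff space $K = \beta(X/_\sim)$, where the order structure is transparent through \Cref{CharOfBands}. Under the isomorphism the hypothesis transfers: every closed ideal of $C(K)$ is a band. By \Cref{CharOfClosId} and \Cref{CharOfBands}, every closed set $F \subseteq K$ gives a closed ideal $J_F(K)$, and this is a band exactly when $F = \overline{\Int F}$; so the hypothesis forces every closed subset of $K$ to equal the closure of its interior. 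I would argue this compels $K$ to be finite: a compact Hausdorff space in which every closed set is the closure of its interior has no points that are limits of nontrivial sequences of distinct points, and in particular cannot contain a non-isolated point (a singleton $\{x\}$ is closed with empty interior unless $x$ is isolated, so every point of $K$ must be isolated, whence $K$ is discrete and, being compact, finite).

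With $K = \beta(X/_\sim)$ finite, $C(K)$ is finite-dimensional, and since $C_b(X) \cong C(K)$ as Banach lattices, $C_b(X)$ is finite-dimensional, proving (b). For (a), finite-dimensionality of $C_b(X) = C_b(X/_\sim)$ means $X/_\sim = \beta(X/_\sim) = K$ is finite with the discrete topology; then if $X$ is functionally Hausdorff, $\tau\colon X \to X/_\sim$ is injective (distinct points are separated by a continuous function, hence not $\sim$-equivalent), so $X$ injects into the finite set $K$ and is therefore finite. Conversely a finite $X$ trivially has $C_b(X)$ finite-dimensional and, if we assume the hypothesis, the separation needed for functional Hausdorffness follows by building characteristic-type functions on the finitely many points.

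The main obstacle I anticipate is the topological step asserting that ``every closed set equals the closure of its interior'' forces $K$ to be finite (equivalently discrete, given compactness). The subtlety is to rule out infinite compact spaces satisfying this property; the cleanest route is the singleton argument above, observing that for any non-isolated point $x$ the closed set $\{x\}$ has empty interior yet is not the closure of the empty set, contradicting the hypothesis, so every point is isolated and compactness finishes it. I would take care that the transfer of the hypothesis through $\Phi_\tau$ and $\beta$ genuinely preserves the ``closed ideal is a band'' property in both directions, which is exactly what the lattice-isomorphism statements in \Cref{C_b} guarantee.
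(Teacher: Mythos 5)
Your proof is correct, and its combinatorial core coincides with the paper's: both hinge on the observation that a non-isolated point yields a closed singleton with empty interior, which cannot equal the closure of an open set, so the hypothesis forces discreteness, and compactness then gives finiteness. Where you genuinely differ is in how compactness is obtained and where the singleton argument is run. The paper never leaves $X/_\sim$: by \Cref{bands in Tychonoff} every maximal ideal of $C_b(X/_\sim)$ is a band, hence fixed, so $X/_\sim$ is compact by \Cref{maximals compact}, and then \Cref{bands in Tychonoff} applied to $J_{\{x\}}(X/_\sim)$ makes each singleton open. You instead transfer the hypothesis through $\Phi_\tau$ and $\beta$ to $C(K)$ with $K=\beta(X/_\sim)$, where compactness comes for free and only the classical \Cref{CharOfClosId} and \Cref{CharOfBands} are needed; the cost is the (easy) return trip, namely that $X/_\sim$, being dense in the finite and hence discrete $K$, equals $K$, after which injectivity of $\tau$ under functional Hausdorffness finishes (a). So your route buys independence from \Cref{maximals compact} (the Gillman--Jerison fixed-ideal characterization of compactness) and from the paper's own \Cref{bands in Tychonoff}, using only the compact-space results plus the transfer isomorphisms, while the paper's route showcases its new band characterization and needs no density argument; you also prove (b) first and deduce (a), whereas the paper does the reverse. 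Two cosmetic points: the order continuity of the norm you invoke at the outset is never actually used, and the converse direction of (a) needs no hypothesis at all, since a finite Hausdorff space is automatically discrete and hence functionally Hausdorff.
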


\begin{proof}
(a) By the assumption and \Cref{bands in Tychonoff} every maximal ideal in $C_b(X/_\sim)$ is of the form $J_F(X/_\sim)$ for some closed subset $F$ of $X/_\sim$. Hence, $X/_\sim$ is compact by \Cref{maximals compact}. Also,
 for each $x\in X/_\sim$ the closed ideal $J_{\{x\}}(X/_\sim)$ is a band, so that $\{x\}$ is open in $X/_\sim$, again by \Cref{bands in Tychonoff}. Thus,  $X/_\sim$ is discrete and since it is also compact, it is finite.
 Since $X=X/_\sim$ as sets, $X$ is finite. The converse statement obviously holds.

(b) By (a) we conclude $X/_\sim$ is finite and since $C_b(X)\cong C_b(X/_\sim)$ it follows that $C_b(X)$ is finite-dimensional.
\end{proof}

\section{Extremally disconnected spaces and order completeness}\label{extr oc}

Although the function spaces $L_p(\mu)$ are order complete whenever $0<p<\infty$, the Banach lattice of continuous functions on a compact Hausdorff space is very rarely order complete. This happens only in the case when the underlying topological space is \term{extremally disconnected}, i.e., the closure of every open set is again open. In regular extremally disconnected spaces every point has a basis consisting of clopen sets.

In this section, as an application of \Cref{bands in c_0} and  \Cref{bands in Tychonoff} among locally compact Hausdorff spaces and $T_{3\frac 1 2}$-spaces, respectively, we characterize order complete $C_0(X)$ and $C_b(X)$ spaces, respectively. The characterization is the same as in the case of compact Hausdorff spaces. Before we proceed to results, we recall the following characterization from \cite[Proposition 2.1.4]{Meyer-Nieberg:91}.

\begin{thm}\label{OC C(X)}
For a compact Hausdorff space $X$ the following statements are equivalent.
\begin{enumerate}
  \item [(a)] $C(X)$ is order complete.
  \item [(b)] $C(X)$ has the projection property.
  \item [(c)] $X$ is extremally disconnected.
\end{enumerate}
\end{thm}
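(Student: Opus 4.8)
The plan is to prove the theorem characterizing order completeness of $C(X)$ for compact Hausdorff $X$ via the three equivalences (a) order complete, (b) projection property, (c) extremally disconnected. Since this is a well-known result cited from Meyer-Nieberg, I would structure a self-contained argument around the band characterization already available in the excerpt, namely \Cref{CharOfBands}.

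\emph{The equivalence (a) $\Leftrightarrow$ (b).} First I would recall from the preliminaries that $C(X)$ is a Banach lattice (in particular uniformly complete by \cite[Proposition 1.1.8]{Meyer-Nieberg:91}), and that by \cite[Theorem 1.59]{Aliprantis:03} a uniformly complete vector lattice with the projection property is order complete. Conversely, order completeness always entails the projection property. Thus (a) and (b) are equivalent and this step is essentially free given the remarks preceding the statement.

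\emph{The implication (b) $\Rightarrow$ (c).} Assume $C(X)$ has the projection property, so every band is a projection band. Let $U\subseteq X$ be open and put $F=\overline{U}$. A closed set that is the closure of an open set is the closure of its own interior, so by \Cref{CharOfBands}(a) the ideal $J_F(X)$ is a band, hence by assumption a projection band. By \Cref{CharOfBands}(b) this forces $F=\overline{U}$ to be clopen. Therefore the closure of an arbitrary open set is open, i.e. $X$ is extremally disconnected.

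\emph{The implication (c) $\Rightarrow$ (b).} Conversely assume $X$ is extremally disconnected and let $B$ be a band in $C(X)$. By \Cref{CharOfBands}(a) we may write $B=J_F(X)$ with $F=\overline{\Int F}$; since $\Int F$ is open and $X$ is extremally disconnected, $F=\overline{\Int F}$ is clopen. Then \Cref{CharOfBands}(b) shows $B$ is a projection band, so $C(X)$ has the projection property. I expect the main obstacle to be purely expository: making sure the statements of \Cref{CharOfBands} are invoked in exactly the form stated (closures of open sets versus closures of interiors) and correctly translating between ``extremally disconnected'' and the clopenness condition, rather than any genuine mathematical difficulty. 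Since all heavy lifting is already packaged in \Cref{CharOfBands} and the cited Banach-lattice facts, the proof reduces to stringing these together.
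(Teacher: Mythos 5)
Your proof is correct, and it is essentially the paper's own approach: the paper does not actually prove \Cref{OC C(X)} (it is recalled from Meyer-Nieberg), but its proofs of the analogous \Cref{OC C_0(X)} and \Cref{OC C_b(X)} follow exactly your scheme --- (a)$\Leftrightarrow$(b) from uniform completeness of Banach lattices together with the projection property, and (b)$\Leftrightarrow$(c) by translating bands and projection bands into closed sets via \Cref{CharOfBands}. One small elision: in (b)$\Rightarrow$(c), \Cref{CharOfBands}(b) only gives $J_F(X)=J_G(X)$ for \emph{some} clopen $G$, and to conclude that $F=\overline{U}$ itself is clopen you need injectivity of $F\mapsto J_F(X)$ on closed sets, which holds since compact Hausdorff spaces are $T_{3\frac 1 2}$ (see the second paragraph of \Cref{Section: order density}); the paper makes this step explicit, via Urysohn's lemma, in its proof of \Cref{OC C_0(X)}.
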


The following theorem is the analog of \Cref{OC C(X)} for $C_0(X)$. Among the already mentioned \Cref{bands in c_0} and \Cref{bands in Tychonoff} we also require the fact that order complete vector lattices are precisely those that are uniformly complete and have the projection property.

\begin{thm}\label{OC C_0(X)}
For a locally compact Hausdorff space $X$ the following statements are equivalent.
\begin{enumerate}
\item [(a)] $C_0(X)$ is order complete.
\item [(b)] $C_0(X)$ has the projection property.
\item [(c)] $X$ is extremally disconnected.
\end{enumerate}
\end{thm}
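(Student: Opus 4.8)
The plan is to obtain (a)$\Leftrightarrow$(b) from a general Banach-lattice principle and then to prove (b)$\Leftrightarrow$(c) by feeding the band/projection-band dictionary of \Cref{bands in c_0} through a purely topological reformulation of extremal disconnectedness. For (a)$\Leftrightarrow$(b), I would simply note that $C_0(X)$ is a Banach lattice, hence uniformly complete, and that (as recalled just before the statement) a uniformly complete vector lattice is order complete precisely when it has the projection property. Thus (a) and (b) are equivalent with no further work, and it suffices to treat (b)$\Leftrightarrow$(c).

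The key translation I would record first is that the closed sets of the form $F=\overline{\Int F}$ are exactly the closures of open sets: if $F=\overline U$ with $U$ open, then $U\subseteq\Int F$ forces $F=\overline{\Int F}$, and conversely any $F=\overline{\Int F}$ is the closure of the open set $\Int F$. Consequently $X$ is extremally disconnected \Iff every such regular closed set is clopen. With this observation, (c)$\Rightarrow$(b) is immediate: by \Cref{bands in c_0}(a) every band in $C_0(X)$ equals $J_F^0(X)$ for some $F$ with $F=\overline{\Int F}$; extremal disconnectedness makes this $F$ clopen, and then \Cref{bands in c_0}(b) identifies the band as a projection band, so the projection property holds.

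For (b)$\Rightarrow$(c) I would take an arbitrary open set $U$ and set $F=\overline U$, which is regular closed, so that $J_F^0(X)$ is a band by \Cref{bands in c_0}(a). The projection property forces $J_F^0(X)$ to be a projection band, and \Cref{bands in c_0}(b) then yields that $F$ is clopen; hence $\overline U=F$ is open. As $U$ was arbitrary, $X$ is extremally disconnected.

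The one step needing genuine care --- and the only place where local compactness is used beyond complete regularity --- is concluding that the regular closed set $F$ \emph{itself} (and not merely some set defining the same ideal) is clopen, since the statement of \Cref{bands in c_0}(b) asserts only the existence of a clopen set $G$ with $J_F^0(X)=J_G^0(X)$. This rests on the injectivity of $F\mapsto J_F^0(X)$ on closed sets: if $F\neq G$ are closed and, say, $x\in F\setminus G$, then local compactness and the Hausdorff property let me choose a relatively compact open neighborhood of $x$ whose closure is disjoint from $G$, and a compactly supported continuous function equal to $1$ at $x$ and vanishing off that neighborhood; such a function lies in $C_0(X)\cap J_G^0(X)$ but not in $J_F^0(X)$, so $F=G$. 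I expect this injectivity lemma, though routine, to be the main obstacle, because without it the bare existence statement in \Cref{bands in c_0}(b) does not pin down the set $F$ supplied by part (a).
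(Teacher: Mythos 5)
Your proposal is correct and follows essentially the same route as the paper: (a)$\Leftrightarrow$(b) via uniform completeness of the Banach lattice $C_0(X)$, and (b)$\Leftrightarrow$(c) by pushing regular closed sets through the dictionary of \Cref{bands in c_0}. The one step you flag as delicate---that the clopen set $G$ produced by \Cref{bands in c_0}(b) must coincide with $F=\overline{U}$ itself---is exactly the point the paper dispatches with the remark ``as an application of Urysohn's lemma for locally compact Hausdorff spaces, one can prove $F=G$,'' and your injectivity argument via compactly supported functions is precisely the intended justification.
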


\begin{proof}
(a) $\Rightarrow$ (b) is obvious and (b) $\Rightarrow$ (a) follows from the fact that $C_0(X)$ is a Banach lattice.

(b) $\Rightarrow$ (c) Let $U$ be an arbitrary open set in $X$ and let us denote by $F$ the closure of $U$. Then
$J_F^0(X)$ is a band in $C_0(X)$ by \Cref{bands in c_0}. By the assumption $J_F^0(X)$ is a projection band, so that by \Cref{bands in c_0} there exists a clopen subset $G$ of $X$ such that $J_F^0(X)=J_G^0(X)$. As an application of Urysohn's lemma for locally compact Hausdorff spaces, one can prove $F=G$. Hence, $F$ is open and $X$ is extremally disconnected.

(c) $\Rightarrow$ (b)
Suppose now that $X$ is extremally disconnected and take an arbitrary band $J$ in $C_0(X)$. Then there exists a closed set $F$ which is the closure of its interior and $J=J_F^0(X)$. Since $X$ is extremally disconnected, $F$ is clopen, so that $J_F^0(X)$ is a projection band by \Cref{bands in c_0}. Therefore, $C_0(X)$ has the projection property. \end{proof}

Subspaces of extremally disconnected spaces are not necessary extremally disconnected as the following example shows.

\begin{ex}\label{counterexample extremally disconnected}
Let $(X,\tau)$ be any topological space that is not extremally disconnected and $\infty$ an object that is not in $X$.
Define $Y=X\cup \{\infty\}$ and
$$\tau_1=\{U\cup\{\infty\}:\; U\in \tau\}\cup\{\emptyset\}.$$
It is easy to see that $\tau_1$ is a topology on $Y$, and that closed sets in $Y$ are precisely the closed sets of $X$ and whole $Y$.
It should be noted that the space $(Y,\tau_1)\in T_0$ while $(Y,\tau_1)\notin T_1.$
It is also clear that the $(X,\tau)\hookrightarrow (Y,\tau_1)$ is continuous and that $X$ is not dense in $Y$.

We claim that $Y$ is extremally disconnected. Indeed, pick an open set $U$ in $Y$. If $U=\emptyset$, then its closure is still empty which is open in $Y$. Otherwise $U=V\cup\{\infty\}$ for some open set $V$ in $X$. Since $\infty\in U$, the closure of $U$ is $Y$ which is open in $Y$.
This proves the claim.
\end{ex}

Although \Cref{counterexample extremally disconnected} shows that a subspace of an extremally disconnected space is not necessary extremally disconnected, there are positive results \cite{Arhangelski:74}. Due to the importance of the following proposition we include its short proof for the sake of completeness.

\begin{prop}\label{on ex disc sp}
Let $X$ be a subspace of an extremally disconnected space $Y$.
Then $X$ is extremally disconnected in either of the following cases.
\begin{enumerate}
\item [(a)] $X$ is open in $Y$.
\item [(b)] $X$ is dense in $Y$.
%\item [(c)] $Y$ is metrizable.
\end{enumerate}
\end{prop}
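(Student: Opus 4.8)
The plan is to establish each case by taking an arbitrary open set in $X$ and showing its closure (computed in $X$) is open in $X$, exploiting the extremal disconnectedness of the ambient space $Y$. The key relation I will use throughout is that for $U$ open in $X$, the closure in $X$ satisfies $\Cl_X U = \Cl_Y U \cap X$, so the task reduces to understanding how $\Cl_Y U$ behaves under the extremal disconnectedness of $Y$.

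For case (a), where $X$ is open in $Y$, I would start with an arbitrary open set $U$ in $X$. Since $X$ is open in $Y$, the set $U$ is also open in $Y$. By extremal disconnectedness of $Y$, the closure $\Cl_Y U$ is open in $Y$. Then $\Cl_X U = \Cl_Y U \cap X$ is an intersection of two open sets in $Y$ (namely $\Cl_Y U$ and $X$), hence open in $Y$, and therefore open in $X$. This case is essentially immediate once the closure relation is in place.

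For case (b), where $X$ is dense in $Y$, the argument is slightly more delicate because an open set $U$ in $X$ need not be open in $Y$. The natural fix is to write $U = V \cap X$ for some open set $V$ in $Y$, and then observe that by density of $X$ in $Y$ one has $\Cl_Y U = \Cl_Y(V \cap X) = \Cl_Y V$. The point is that $V \cap X$ is dense in $V$ (since $X$ is dense in $Y$ and $V$ is open), so their closures in $Y$ coincide. By extremal disconnectedness of $Y$, $\Cl_Y V$ is open in $Y$, and then $\Cl_X U = \Cl_Y U \cap X = \Cl_Y V \cap X$ is open in $X$.

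The main obstacle I anticipate is the density-driven equality $\Cl_Y(V \cap X) = \Cl_Y V$ in case (b); this is where the hypothesis of density is genuinely used and must be justified carefully, using that $V$ is open so that every point of $V$ is a limit of points of $V \cap X$. The closure relation $\Cl_X U = \Cl_Y U \cap X$ is a standard fact about subspace topology that I would invoke without extended comment. Everything else is routine manipulation of open sets.
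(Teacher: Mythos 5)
Your proposal is correct and follows essentially the same route as the paper: in both cases the key identity is $\Cl_X U=\Cl_Y V\cap X$ with $\Cl_Y V$ open by extremal disconnectedness of $Y$, and your density step (showing $\Cl_Y(V\cap X)=\Cl_Y V$ because $V\cap X$ is dense in the open set $V$) is just a repackaging of the paper's direct verification that $\Cl_Y V\cap X\subseteq\Cl_X(V\cap X)$. No gaps; the argument is sound as written.
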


\begin{proof}
(a) Take an arbitrary open subset $U\subseteq  X$. Then $U$ is also open in $Y$, so that $\Cl_Y(U)$ is open in $Y.$ Then by the definition of the relative topology we have $\Cl_{X}(U)=\Cl_Y(U)\cap  X$; hence the closure of $U$ in $X$ is open.

(b) Take any open set $U$ in $X$. Then there exists an open set $V$ in $Y$ such that $U=V\cap X$. The set $\Cl_Y V$ is open in $Y$. If
$\Cl_X(V\cap X)=\Cl_Y V\cap X$, then $\Cl_X U=\Cl_X(V\cap X)$ is open in $X$ and $X$ is extremally disconnected.

To prove $\Cl_X(V\cap X)=\Cl_Y V\cap X$  observe that $\Cl_X (V\cap X)\subseteq \Cl_Y V\cap X$  follows from the fact that the set $\Cl_Y V\cap X$  is closed in $X$ and contains $V\cap X.$ For the opposite inclusion, pick $x\in \Cl_Y V\cap X$ and any open neighborhood $W$ of $x$ in $X$. Then there exists an open set $W'$ in $Y$ such that $W=W'\cap X$. Since $x\in \Cl_YV$, we have  $W'\cap V\neq \emptyset$, and since $X$ dense in $Y$ the set $W\cap (V\cap X)=W'\cap V\cap X$ is nonempty as well. This proves $x\in \Cl_X(V\cap X)$.
\end{proof}

We apply \Cref{on ex disc sp} in the following two cases: 
\begin{itemize}
  \item If $X^+$ is extremally disconnected, the dense subspace $X$ of $X^+$ is extremally disconnected. 
  \item If $X\in T_{3\frac 1 2}$ and $\beta X$ is extremally disconnected, the dense subspace $X$ of $\beta X$ is extremally disconnected. 
\end{itemize}

%Suppose now that $Y$ is extremally disconnected and $X$ is an open subset of $Y$. Then $X$ is extremally disconnected as well.
%Indeed, take an arbitrary open subset $U\subseteq  X$. Then $U$ is also open in $Y$, so that $\Cl_Y(U)$ is open in $Y.$ Then by the definition of the relative topology we have
%$\Cl_{X}(U)=\Cl_Y(U)\cap  X$; hence the closure of $U$ in $X$ is open.

The following result now follows immediately from \Cref{OC C(X)} and \Cref{OC C_0(X)}.

\begin{cor}
Let $X$ be a locally compact Hausdorff space. If  $C(X^+)$ is order complete, then $C_0(X)$ is order complete.
\end{cor}

The preceding corollary can be proved directly without involving extremally disconnected spaces and their connection to order completeness. This follows immediately from $C_0(X)\cong J_{\{\infty\}}(X^+)$ and from the fact that every order ideal in an order complete vector lattice is order complete on its own (see e.g. \cite[Theorem 1.62]{Aliprantis:03}).

The following diagram shows the connection between order completeness and extreme disconnectedness for the locally compact Hausdorff space and its one-point compactification.
$$\begin{array}{ccc}
C(X^+)\textrm{ is order complete } &\Leftrightarrow & X^+ \textrm{ is extremally disconnected }\\
 \Downarrow && \Downarrow \\
 C_0(X) \textrm{ is order complete} & \Leftrightarrow &  X \textrm{ is  extremally disconnected}
\end{array}$$

None of the arrows above cannot be reversed as the following example shows.

\begin{ex}
Take $X=\mathbb N$. If we endow $X$ with a discrete topology, it becomes a locally compact noncompact Hausdorff space. The space $C_0(X)$ is precisely the order complete Banach lattice $c_0$, however $C(X^+)$ is isometrically lattice isomorphic to the Banach lattice $c$ of all convergent sequences which is not order complete.
\end{ex}

We continue this section with the variant of \Cref{OC C_0(X)} for the Banach lattice $C_b(X)$ where $X\in T_{3 \frac 1 2}.$ Since the proof is basically the same as the proof of \Cref{OC C_0(X)} we omit it. We should mention that instead of \Cref{bands in c_0} one should use \Cref{bands in Tychonoff}.

\begin{thm}\label{OC C_b(X)}
For $X\in T_{3\frac 1 2}$ the following statements are equivalent.
\begin{enumerate}
\item [(a)] $C_b(X)$ is order complete.
\item [(b)] $C_b(X)$ has the projection property.
\item [(c)] $X$ is extremally disconnected.
\end{enumerate}
\end{thm}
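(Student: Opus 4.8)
The plan is to follow the scheme of the proof of \Cref{OC C_0(X)} almost verbatim, replacing the band characterization \Cref{bands in c_0} by its $C_b$-analogue \Cref{bands in Tychonoff}. The equivalence of (a) and (b) requires no new work: order completeness always implies the projection property, and conversely, since $C_b(X)$ is a Banach lattice (hence uniformly complete), the projection property forces order completeness by the same criterion used in the proof of \Cref{OC C_0(X)}. Thus the real content is the equivalence of the projection property with extreme disconnectedness, which I would establish by the two implications (b)$\Rightarrow$(c) and (c)$\Rightarrow$(b).

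For (b)$\Rightarrow$(c) I would take an arbitrary open set $U \subseteq X$ and put $F = \overline{U}$. Being the closure of an open set, $F$ equals $\overline{\Int F}$, so by \Cref{bands in Tychonoff}(a) the closed ideal $J_F(X)$ is a band. The hypothesis (b) upgrades it to a projection band, whence \Cref{bands in Tychonoff}(b) yields a clopen set $G \subseteq X$ with $J_F(X) = J_G(X)$. Since $X \in T_{3\frac 1 2}$, the assignment $F \mapsto J_F(X)$ is injective on closed sets (established in the second paragraph of \Cref{Section: order density}), so $F = G$ is clopen; in particular $\overline U$ is open. As $U$ was arbitrary, $X$ is extremally disconnected.

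For (c)$\Rightarrow$(b) I would start from an arbitrary band $B$ in $C_b(X)$ and apply \Cref{bands in Tychonoff}(a) to write $B = J_F(X)$ for a closed set $F$ with $F = \overline{\Int F}$. Extreme disconnectedness makes $\overline{\Int F}$ open, so $F$ is clopen, and \Cref{bands in Tychonoff}(b) then shows $B$ is a projection band. Since $B$ was arbitrary, $C_b(X)$ has the projection property.

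I do not anticipate a genuine obstacle here: all the difficulty has already been absorbed into \Cref{bands in Tychonoff} and into the Banach-lattice equivalence of the projection property with order completeness. The only step demanding care is the passage from $F = G$ to ``$F$ is open'' in (b)$\Rightarrow$(c), where one must invoke the $T_{3\frac 1 2}$ injectivity of $F \mapsto J_F(X)$ rather than attempt to re-derive openness of $\overline U$ by hand.
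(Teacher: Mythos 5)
Your proposal is correct and coincides with the paper's intended argument: the paper omits the proof of \Cref{OC C_b(X)} precisely because it is the proof of \Cref{OC C_0(X)} with \Cref{bands in c_0} replaced by \Cref{bands in Tychonoff}, which is exactly what you carry out. Your use of the injectivity of $F\mapsto J_F(X)$ on closed sets of a $T_{3\frac 1 2}$ space to conclude $F=G$ is the right substitute for the Urysohn-type argument used in the $C_0(X)$ case.
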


The following diagram reveals the connection between order completeness and extreme disconnectedness for $X\in T_{3\frac 1 2}$ and its Stone-\v Cech compactification $\beta X$. Since the algebras $C(\beta X)$ and $C_b(X)$ are isometrically lattice and algebra isomorphic all the arrows here are also reversible in contrast with the arrows in the diagram involving $C_0(X)$ and $C(X^+)$.
$$\begin{array}{ccc}
C(\beta X)\textrm{ is order complete } &\Leftrightarrow & \beta X \textrm{ is extremally disconnected }\\
 \Updownarrow && \Updownarrow \\
 C_b(X) \textrm{ is order complete} & \Leftrightarrow &  X \textrm{ is  extremally disconnected}
\end{array}$$

%\begin{center}
%\begin{tikzpicture}[bij/.style={fill=white,inner sep=2pt}]
%\matrix (m) [matrix of math nodes, row sep=3em,
%column sep=1.5em, text height=1.5ex, text depth=0.25ex]
%{ {C(X^+)\text{ is order complete}} & {X^+\text{ is extremally disconnected}} \\
%  {C_0(X)\text{ is order complete}} & {X\text{ is extremally disconnected}}\\ };
%\path[->]
%(m-1-1) edge (m-1-2)
%(m-2-1) edge (m-2-2);
%\path[implies-implies,double equal sign distance]
%(m-1-1) edge (m-2-1)
%(m-1-2) edge (m-2-2);
%\end{tikzpicture}
%\end{center}

%\begin{ex}
%Take $X=\mathbb N$. If we endow $X$ with a discrete topology, it becomes a locally compact noncompact Hausdorff space. Since $X$ is discrete, it is extremally disconnected. The one-point compactification $\mathbb N^+$ is isomorphic to $\{\frac{1}{n}\}_{n\in\mathbb N} \bigcup\{0\}$ which is not extremally disconnected. Indeed, note first that the set
%$U=\{\frac{1}{2n+1}\}_{n\in\mathbb N}$ is open.
%\end{ex}

%\begin{lemma}
%Let $X$ be an open dense subset of a compact Hausdforff space $Y$.
%\end{lemma}

If $X\in T_{3\frac 1 2}$ then $X$ is extremally disconnected \Iff
$\beta X$ is. The following example shows that such equivalence does not hold for a locally compact space and its one-point compactification.

\begin{ex}\label{exstremni primer za X+}
The one-point compactification $\mathbb N^+$ of $\mathbb N$ is homeomorphic to the totally disconnected compact space $X=\{\frac{1}{n}:\; n\in \mathbb N\}\cup\{0\}$ which is not extremally disconnected. To see this, let us denote  sets $\{\frac{1}{2n}:\; n\in \mathbb N\}$ and $\{\frac{1}{2n-1}:\; n\in \mathbb N\}$ by $A$ and $B$, respectively. Then both $A$ and $B$ are open and neither of them is closed. The closure $\overline{A}$ of $A$ is $A\cup \{0\}=X\setminus B$ which is not open.
\end{ex}

In \Cref{ex disc -> compact} we prove that $X^+$ is never extremally disconnected when $X$ is a $\sigma$-compact locally compact noncompact Hausdorff space. Before we state and prove the aforementioned result we need the following lemma.

\begin{lemma}
A locally compact Hausdorff space $X$ is $\sigma$-compact \Iff the point $\infty$ in $X^+$ has a countable basis of open sets.
\end{lemma}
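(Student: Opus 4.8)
The plan is to translate the statement into one about cofinal families of compact subsets of $X$. Recall from the description of the topology on $X^+$ that a subset $U\subseteq X^+$ containing $\infty$ is open \Iff $X\setminus U$ is compact in $X$; thus the open neighborhoods of $\infty$ are exactly the sets of the form $\{\infty\}\cup(X\setminus K)$ with $K$ compact in $X$, and $\{\infty\}\cup(X\setminus K_1)\subseteq \{\infty\}\cup(X\setminus K_2)$ \Iff $K_2\subseteq K_1$. Hence the assertion that $\infty$ has a countable basis of open sets is equivalent to producing a countable family $(K_n)$ of compact sets such that every compact subset of $X$ is contained in some $K_n$.

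For the reverse implication I would take a countable basis $(U_n)$ of open neighborhoods of $\infty$ and write $U_n=\{\infty\}\cup(X\setminus K_n)$ with each $K_n$ compact. Fixing $x\in X$, the singleton $\{x\}$ is compact since $X$ is Hausdorff, so $\{\infty\}\cup(X\setminus\{x\})$ is an open neighborhood of $\infty$ and therefore contains some $U_n$; this forces $\{x\}\subseteq K_n$, that is $x\in K_n$. As $x$ was arbitrary, $X=\bigcup_n K_n$, so $X$ is $\sigma$-compact. This direction is essentially immediate.

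The substantive direction is the forward one. Assuming $X=\bigcup_n C_n$ with each $C_n$ compact, the goal is to build a countable basis at $\infty$. The key topological input is that in a locally compact Hausdorff space every compact set lies in the interior of some compact set; using this I would construct an exhaustion $(L_n)$ of $X$ by compact sets with $L_n\subseteq\Int L_{n+1}$, choosing inductively $L_{n+1}$ to be a compact set whose interior contains the compact set $L_n\cup C_{n+1}$. Then $\bigcup_n L_n\supseteq\bigcup_n C_n=X$, and I claim the sets $\{\infty\}\cup(X\setminus L_n)$, $n\in\NN$, form a basis at $\infty$: given an arbitrary neighborhood $\{\infty\}\cup(X\setminus K)$ with $K$ compact, the increasing open cover $\{\Int L_n\}$ of $K$ admits a finite subcover, and since the $\Int L_n$ increase we get $K\subseteq\Int L_N\subseteq L_N$ for a single $N$, whence $\{\infty\}\cup(X\setminus L_N)\subseteq\{\infty\}\cup(X\setminus K)$.

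The main obstacle is precisely this construction of the nested exhaustion. Merely passing from $C_n$ to $C_1\cup\dots\cup C_n$ produces an increasing compact cover, but without control on interiors one cannot guarantee that an arbitrary compact $K$ sits inside a single member of the family. Securing the interior-nestedness $L_n\subseteq\Int L_{n+1}$ — and with it the extraction of one $L_N\supseteq K$ via a finite subcover — is where local compactness is genuinely used, and it is the step I would treat with the most care.
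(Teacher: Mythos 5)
Your proof is correct and takes essentially the same route as the paper: the substantive direction rests on an exhaustion of $X$ by compact sets nested in each other's interiors, so that any compact set (hence the complement of any neighborhood of $\infty$) is absorbed by a single member, while the converse is a short T1-type argument showing every point of $X$ lies in some $K_n$. The only cosmetic difference is that you construct the exhaustion $L_n\subseteq\Int L_{n+1}$ by hand from local compactness, whereas the paper cites this standard fact from Dugundji (Theorem XI.7.2).
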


\begin{proof}
Suppose first that $X$ is $\sigma$-compact. Then by
\cite[Theorem XI.7.2]{Dugundji:66} there exist relatively compact open sets $U_n$ in $X$ with $X=\bigcup_{n=1}^\infty U_n$ and $\overline{U_n}\subseteq U_{n+1}$ for each $n\in \mathbb N.$ If $V$ is an open neighborhood of $\infty$, then $X\setminus V$ is compact in $X$.
Since the countable family of open sets $(U_n)$ is increasing and covers $X$ there exists $n\in \mathbb N$ such that $X\setminus V\subseteq \overline{U_n}.$ Then $V_n:=X^+\setminus \overline{U_n}\subseteq V$, and hence the family $(V_n)$ is a countable basis for the point $\infty$ in $X^+$.

For the converse let $(V_n)$ be a countable basis for $\infty$ in $X^+$. Obviously we can assume that $V_n\subseteq V_m$ whenever $m\leq n$. The sets $F_n:=X\setminus V_n$ are compact in $X$ and $F_m\subseteq F_n$ whenever $m\leq n.$ Since $X^+$ is Hausdorff,
$\bigcap_{n=1}^\infty V_n=\{\infty\}$, so that
$$\bigcup_{n=1}^\infty F_n=\bigcup_{n=1}^\infty(X\setminus V_n)=X \setminus  \bigcap_{n=1}^\infty V_n=X.$$
This proves that $X$ is $\sigma$-compact.
\end{proof}

\begin{thm}\label{ex disc -> compact}
Let $X$ be a $\sigma$-compact locally compact Hausdorff space. If $X^+$ is extremally disconnected, then $X$ is compact.
\end{thm}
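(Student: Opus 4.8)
The plan is to argue by contradiction: assume $X$ is \emph{not} compact and manufacture two disjoint open subsets of $X^+$ whose closures both contain the point $\infty$, which is impossible in an extremally disconnected space. The whole argument takes place inside the compact Hausdorff space $X^+$.

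First I would record the tool that will force the contradiction: in an extremally disconnected space disjoint open sets have disjoint closures. Indeed, if $G\cap H=\emptyset$ with $G,H$ open, then $G\subseteq H^c$ gives $\overline G\cap H=\emptyset$; since $\overline G$ is open by extreme disconnectedness, the set $(\overline G)^c$ is clopen and contains $H$, whence $\overline H\subseteq(\overline G)^c$, i.e.\ $\overline G\cap\overline H=\emptyset$.

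Next I would build a convenient neighbourhood basis at $\infty$. Since $X$ is $\sigma$-compact, the preceding lemma supplies a countable basis of open neighbourhoods of $\infty$ in $X^+$. Now $X^+$ is compact Hausdorff, hence regular, and extremally disconnected by hypothesis, so $\infty$ has a basis of clopen sets; intersecting these with the countable basis and then taking finite intersections yields a \emph{decreasing} sequence $(V_n)$ of clopen neighbourhoods of $\infty$ with $\bigcap_n V_n=\{\infty\}$ (the last equality holds because $X^+$ is Hausdorff). Because $X$ is noncompact it is dense in $X^+$, so $\infty$ is not isolated. Consider the pairwise disjoint clopen ``shells'' $D_n:=V_n\setminus V_{n+1}$, which are subsets of $X$ and satisfy $\bigsqcup_n D_n=V_1\setminus\{\infty\}$. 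If only finitely many $D_n$ were nonempty, then $V_n=V_{n+1}$ for all large $n$, forcing $V_N=\{\infty\}$ for some $N$ and making $\infty$ isolated; hence infinitely many shells $D_n$ are nonempty.

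Finally I would enumerate the nonempty shells as $D_{m_1},D_{m_2},\dots$ with $m_1<m_2<\cdots$ and set $G=\bigcup_k D_{m_{2k}}$ and $H=\bigcup_k D_{m_{2k-1}}$. Both are open (unions of clopen sets) and disjoint. Any neighbourhood of $\infty$ contains some $V_N$, and for large enough even (respectively odd) $k$ we have $m_k\ge N$, so the nonempty set $D_{m_k}\subseteq V_{m_k}\subseteq V_N$ meets $G$ (respectively $H$); thus $\infty\in\overline G\cap\overline H$. This contradicts the first step, so $X$ must be compact. The one delicate point is the construction of the clopen decreasing basis at $\infty$ together with the verification that infinitely many shells survive; once those are in place, the interleaving and the disjoint-closures property finish the argument by bookkeeping.
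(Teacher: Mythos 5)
Your proof is correct. It shares the paper's global strategy---assume $X$ is noncompact and contradict extremal disconnectedness by interleaving infinitely many pairwise disjoint nonempty open subsets of $X$ accumulating at $\infty$---but the construction is genuinely different. The paper picks points: from the $\sigma$-compact exhaustion it extracts a sequence $x_n\to\infty$, proves each tail $\{x_n:\; n\ge k\}$ is closed, and then \emph{inductively} builds pairwise disjoint clopen neighbourhoods $W_n\ni x_n$ inside $X$; that step uses that $X$ itself is extremally disconnected, which the paper obtains from \Cref{on ex disc sp} because $X$ is open and dense in $X^+$. The paper's contradiction is that $\overline{W}$, where $W=\bigcup_n W_{2n}$, contains $\infty$ yet misses every $x_{2n-1}$, hence contains no neighbourhood of $\infty$ and so is not open. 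You instead work directly at $\infty$: regularity of the compact Hausdorff space $X^+$ together with its extremal disconnectedness upgrades the countable basis furnished by the lemma preceding the theorem to a decreasing clopen basis $(V_n)$, and the shells $D_n=V_n\setminus V_{n+1}$ are then pairwise disjoint clopen subsets of $X$ for free, infinitely many of them nonempty because $\infty$ is not isolated. Interleaving them yields disjoint open sets $G,H$ with $\infty\in\overline G\cap\overline H$, contradicting your opening lemma that disjoint open sets in an extremally disconnected space have disjoint closures (a correct fact, correctly proved). What your route buys: no inductive bookkeeping, no need to verify the tails are closed, and no appeal to the subspace-heredity result \Cref{on ex disc sp}, since everything happens inside $X^+$. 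What the paper's version displays is precisely that heredity in action, which fits the theme of its section, but as a proof of this particular theorem your argument is shorter and cleaner.
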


\begin{proof}
Suppose $X$ is not compact. Since $X$ is $\sigma$-compact, there exist relatively compact open sets $(U_n)$ in $X$ that cover $X$ and $\overline{U_n}\subseteq U_{n+1}$.  Then the family $(X^+\setminus \overline{U_n})$ is a countable basis of $\infty$. For each $n\in\mathbb N$ we choose $x_n\in U_n\setminus \overline{U_{n-1}}.$ Since each basis neighborhood $X^+\setminus \overline{U_n}$ contains all except finitely many terms of the sequence $(x_n)$ we have $x_n\to \infty$ in $X^+$.

Since $X^+$ is Hausdorff no subnet
of $(x_n)_{n\geq k}$ converges in $X$ for each $k\in \mathbb N$. This implies that for each $k\in\mathbb N$ the set $F_k:=\{x_n:\, n\geq k\}$ is closed in $X^+$ and so in $X$.

We claim that for each $n\in\mathbb N$ there exists a clopen neighborhood  $W_n$ of $x_n$ in $X$ such that $W_n\cap W_m=\emptyset$ whenever $n\neq m$
and $W_n\cap F_m=\emptyset$ whenever $n<m.$ We construct the desired sets inductively. Since the set $X\setminus F_2$ is open and $x_1\in X\setminus F_2$, there is a clopen neighborhood $W_1$ of $x_1$ that is contained in $X\setminus F_2.$ Obviously $W_1\cap F_m=\emptyset$ for $m>1.$ Suppose now that the sets $W_1,\ldots,W_n$ with the required properties are already constructed.
Since the set $(X\setminus W_1\cup\cdots\cup W_n)\setminus F_{n+2}$ is an open neighborhood of $x_{n+1}$ in $X$, it contains a clopen neighborhood $W_{n+1}$ of $x_{n+1}$ in $X$. By construction $W_{n+1}$ does not contain $x_k$ for $k\neq n+1.$
%
%Because $X$ is extremally disconnected, every point in $X$ has a basis of clopen sets and  closures of disjoint open sets are disjoint.
%%The set $\{x_n\mid x\in\NN\}$ is closed in $X$,
%Hence for every $n\in\NN$ there exists
%a clopen set $W_n$ such that $x_n\in W_n$ and $W_n\cap W_m=\emptyset$ for $n\ne m$.
%\marg{Razlo\v zi mi}

The set $W=\bigcup_{n=1}^\infty W_{2n}$ is open in $X$. Since $X$ is locally compact, $X$ is open in $X^+$, so that $W$ is open in $X^+$ as well.
Because $x_{2n}\to\infty$ we have $\infty \in\overline W$.
From $x_{2n-1}\to \infty$ we conclude that every neighborhood $U$ of $\infty$ contains infinitely many
elements $x_{2n-1}$. On the other hand $W_{2n-1}\cap W=\emptyset$ implies  $x_{2n-1}\not\in \overline W$ for each $n\in\mathbb N$.
Therefore, for an arbitrary neighborhood $U$ of $\infty$ we have
$U\not\subseteq \overline W$, so that $\overline W$ is not open in $X^+$.
This is in contradiction that $X^+$ is extremally disconnected.
Hence, $X$ is compact.
\end{proof}

\begin{cor}
Let $X$ be a locally compact $\sigma$-compact Hausdorff space. If
$C(X^+)$ is order complete, then $X$ is compact and $C(X)$ is a projection band of codimension one in $C(X^+)$.
\end{cor}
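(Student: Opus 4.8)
The plan is to assemble the statement from results already established, with essentially all of the real work carried out by \Cref{ex disc -> compact}. First I would note that $X^+$ is a compact Hausdorff space, so \Cref{OC C(X)} applies: order completeness of $C(X^+)$ is equivalent to $X^+$ being extremally disconnected. Since $X$ is $\sigma$-compact, locally compact and Hausdorff, I can then feed this into \Cref{ex disc -> compact} to conclude that $X$ is compact. This settles the first half of the corollary.

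For the second half I would use the fact recorded in \Cref{C_0} that when $X$ is compact the point $\infty$ is isolated in $X^+$, so that both $X$ and $\{\infty\}$ are clopen in $X^+$. Since $X$ is clopen, extending a function by the value $0$ at $\infty$ gives a lattice isomorphism of $C(X)=C_0(X)$ onto the closed ideal $J_{\{\infty\}}(X^+)$ of $C(X^+)$, so that $C(X)$ is identified with $J_{\{\infty\}}(X^+)$. As $\{\infty\}$ is clopen, \Cref{CharOfBands}(b) immediately gives that $J_{\{\infty\}}(X^+)$ is a projection band in $C(X^+)$.

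It then remains to compute the codimension. Here I would observe that $J_{\{\infty\}}(X^+)$ is precisely the kernel of the evaluation functional $f\mapsto f(\infty)$ on $C(X^+)$, which is surjective onto $\RR$ since it sends the constant functions onto $\RR$; hence $J_{\{\infty\}}(X^+)$ has codimension one. Equivalently, using the disjoint-complement formula from \Cref{Section: order density}, its complement is $J_{\{\infty\}}(X^+)^d=J_{\overline X}(X^+)=J_X(X^+)$, the closure being $X$ itself as $X$ is clopen; this ideal consists of the functions vanishing on $X$ and is therefore spanned by the characteristic function of the isolated point $\infty$, hence one-dimensional. Either way the decomposition $C(X^+)=J_{\{\infty\}}(X^+)\oplus J_X(X^+)$ exhibits $C(X)\cong J_{\{\infty\}}(X^+)$ as a projection band of codimension one.

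I do not anticipate a genuine obstacle here: the only substantive input is \Cref{ex disc -> compact}, and once it delivers compactness of $X$ the remaining claims follow directly from the isolated-point structure of $X^+$ together with \Cref{CharOfBands}(b). The corollary is thus essentially a bookkeeping consequence of the preceding theorems.
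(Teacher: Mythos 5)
Your proposal is correct and follows essentially the same route as the paper: order completeness of $C(X^+)$ gives extremal disconnectedness of $X^+$ via \Cref{OC C(X)}, then \Cref{ex disc -> compact} yields compactness of $X$, and the isolated-point structure of $X^+$ together with \Cref{CharOfBands}(b) identifies $C(X)=J_{\{\infty\}}(X^+)$ as a projection band. Your explicit codimension argument (kernel of the evaluation functional at $\infty$, or equivalently the one-dimensional disjoint complement $J_X(X^+)$ spanned by $\chi_{\{\infty\}}$) just fills in what the paper dismisses as obvious, and your citation of \Cref{OC C(X)} is in fact more accurate than the paper's own reference at that step.
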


\begin{proof}
If $C(X^+)$ is order complete then $X^+$ is extremally disconnected by \Cref{CharOfBands}, so that by \Cref{ex disc -> compact} $X$ is compact.
Therefore $C(X)=C_0(X)$ and $\infty$ is isolated in $X^+$. This means that $\{\infty\}$ is clopen in $X^+$ and \Cref{CharOfBands} implies that $C(X)=J_{\{\infty\}}(X^+)$ is a projection band in $C(X^+)$. That the codimension of $C(X)$ in $C(X^+)$ is one is obvious.
%is one follows from the fact that $J_{\{\infty\}}(X^+)$ is a maximal ideal in the Banach algebra $C(X^+)$.
\end{proof}

There is no hope that \Cref{ex disc -> compact} can be applied to $X$ and its Stone-\v Cech compactification $\beta X$. The reason behind it is that the space $\beta X$ is just too big and the sequential nature of the construction in the proof of \Cref{ex disc -> compact} simply fails.

\begin{ex}
The set of all positive integers $\mathbb N$ endowed with a discrete topology is a $\sigma$-compact locally compact discrete space.
As a discrete space it is definitely extremally disconnected. Since $\mathbb N\in T_{3\frac 1 2}$,  $\beta \mathbb N$ is extremally disconnected; yet $\mathbb N$ is not compact.
\end{ex}

%\begin{lemma}\label{ex disc -> compact}
%Let $X$ be an open set in a compact Hausdorff space $Y$, and let $X$ and $Y$ be en extremally disconnected spaces.
%%\marg{Nekaj ne \v stima. \v Ce je $X$ lokalno kompakten Hausdorffov, potem je $X$ odprt podprostor v $\beta X$. \v Ce je $\beta X$ ekstremno nepovezan, potem je $X$ ekstremno nepovezan. Torej je $X$ kompakten. Po drugi strani pa so naravna \v stevila neskon\v cna diskretna lokalno kompaktna nekompaktna mno\v zica z esktremno nepovezano Stone-Cechovo kompaktifikacijo.}
%Then $X$ is also compact.
%\end{lemma}
%
%\begin{proof}
%Suppose $X$ is not compact. Then there exists a sequence $(x_n)$ in $X$ such that
%$x_n\ne x_m$ for $n\ne m$ and $\lim_{n\to\infty} x_n=y\in Y\setminus X$.
%Because $X$ is extremally disconnected, every point has a basis of clopen sets.
%The set $\{x_n\mid x\in\NN\}$ is closed in $X$, hence for every $n\in\NN$ there exists
%a clopen set $W_n$ such that $x_n\in W_n$ and $W_n\cap W_m=\emptyset$ for $n\ne m$.
%The set $W=\bigcup_{n=1}^\infty W_{2n}$ is open in $X$ and also in $Y$.
%Because $y=\lim_{n\to\infty} x_{2n}$, $y\in\overline W$.
%Because $W_{2n+1}\cap W=\emptyset$, $x_{2n+1}\not\in \overline W$.
%Because $y=\lim_{n\to\infty} x_{2n+1}$, in every neighborhood $U$ of $y$ lie infinitely many
%elements $x_{2n+1}$, hence $U\not\subseteq \overline W$. Therefore $\overline W$ is not open,
%therefore $Y$ is not extremally disconnected. Contradiction.
%\end{proof}

\section{Lifting un-convergence}\label{lift un}

In this section we apply our results to the so-called problem of ``\emph{lifting un-convergence}". We first recall some basic facts needed throughout this section.

If $E$ is a Banach lattice, then a net $(x_\alpha)$ is said to \term{un-converge} to a vector $x\in E$ whenever for each $y\in E_+$ we have $|x_\alpha-x|\wedge y\to 0$ in norm. We write $x_\alpha\goesun x$ whenever the net $(x_\alpha)$ un-converges to $x$. This mode of convergence was introduced by V.G. Troitsky \cite{Troitsky:04} under the name of \term{$d$-convergence}. In \cite[Example 20]{Troitsky:04} it was proved that un-convergence in $C_0(X)$ coincides with the uniform convergence on compacta of $X$ whenever $X$ is normal. In particular, when $X$ is a compact Hausdorff space, then un-convergence coincides with the uniform convergence. In the case of $L_p(\mu)$-spaces with $\mu$ finite he proved that un-convergence coincides with convergence in measure
\cite[Example 23]{Troitsky:04}. It is a standard fact from measure theory that a sequence $(f_n)$ converging in measure to $f$ always has a subsequence converging to $f$ almost everywhere (see e.g. \cite[Theorem 2.30]{Folland:99}). In \cite[Proposition 3.1]{GTX} the authors proved that a sequence $(f_n)$ in $L_0(\mu)$ converges almost everywhere to $f\in L_0(\mu)$ whenever $|f_n-f|\wedge g\to 0$ in order of $L_0(\mu)$.
The latter mode of convergence goes back to \cite{Nakano:48, DeMarr:64, Kaplan:97}. Kaplan referred to this convergence as \term{unbounded order convergence} or uo-convergence for short. Until very recently unbounded order convergence was not studied actively and was left out from the active area of research. The systematic study of this mode of convergence and its properties started with papers of Gao, Troitsky and Xanthos \cite{Gao:14, GaoX:14, GTX} and others. The systematic study  and properties of un-convergence started in \cite{DOT}. Among other things, authors proved that un-convergence is topological. On the other hand, uo-convergence is not (see e.g. \cite{Ordman:66}). In \cite{KMT} the authors initiated the study of un-topology, i.e., the topology given by un-convergence. They posed the following question.

\begin{quest}\label{question}
 Let $B$ be a band in a Banach lattice $E$. Suppose that every net in $B$
which is un-null in $B$ is also un-null in $E$. Does this imply that $B$ is a projection band?
\end{quest}

When the norm of $E$ is order continuous, every band is a projection band.  In this case it was proved in \cite{KMT}  that a net $(x_\alpha)$ which is un-null in a sublattice of $E$ it is also un-null in $E$. \cite[Example 4.2]{KMT} shows that there exists a band $B$ in $C[-1,1]$ and a un-null net $(x_\alpha)$ in $B$ which is not un-null in $C[-1,1]$.

The following theorem provides a positive answer to \Cref{question}
for a Banach lattice $C_b(X)$ of all bounded continuous functions on a Hausdorff space.

\begin{thm}\label{bands vs projection bands}
Let $X$ be a Hausdorff space and $J$ a closed ideal in $C_b(X)$. Suppose that every un-null net $(f_\alpha)$ in $J$ is also un-null in $C_b(X)$. Then $J$ is a projection band in $C_b(X)$.
\end{thm}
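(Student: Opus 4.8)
The plan is to transport the whole problem to a compact Hausdorff space, where un-convergence degenerates to norm convergence, and there produce—whenever $J$ fails to be a projection band—an explicit net witnessing a contradiction. First I would invoke the machinery of \Cref{C_b}: for the Hausdorff space $X$ we have the isometric lattice (and algebra) isomorphism $\Psi=\beta\circ\Phi_\tau^{-1}\colon C_b(X)\to C(K)$, where $K=\beta(X/_\sim)$ is compact Hausdorff. Since $\Psi$ preserves $|\cdot|$, $\wedge$ and the norm, it preserves un-convergence, and it maps the closed ideal $J$ onto a closed ideal $\Psi(J)\subseteq C(K)$ in such a way that un-nullity of a net in $J$ (resp.\ in $C_b(X)$) matches un-nullity of its image in $\Psi(J)$ (resp.\ in $C(K)$). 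Thus the hypothesis becomes: every un-null net in $\Psi(J)$ is un-null in $C(K)$, and since $\Psi$ is a lattice isomorphism it suffices to prove that $\Psi(J)$ is a projection band in $C(K)$. By \Cref{CharOfClosId} I may write $\Psi(J)=J_F(K)$ for a closed set $F\subseteq K$, and by \Cref{CharOfBands}(b) the goal reduces to showing that $F$ is clopen.

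The key preliminary observation is that in $C(K)$ with $K$ compact, un-convergence coincides with norm convergence: testing un-nullity of a net $(g_\alpha)$ against the single positive element $\one$ already gives $\||g_\alpha|\wedge\one\|_\infty\to 0$, which is impossible for a net whose sup-norms stay bounded away from $0$. Consequently, to contradict the hypothesis it is enough to exhibit a net in $J_F(K)$ that is un-null in $J_F(K)$ yet has norm bounded below.

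Next I would argue by contradiction, assuming $F$ is not open. Then there is a point $x_0\in F\setminus\Int F$, equivalently $x_0\in F\cap\overline{K\setminus F}$. I direct the family $\mathcal C$ of all compact subsets of $K\setminus F$ by inclusion. For each $C\in\mathcal C$, since $x_0\notin C$ and $C$ is closed, $x_0$ has a neighbourhood disjoint from $C$; as $x_0\in\overline{K\setminus F}$, that neighbourhood contains a point $y_C\in (K\setminus F)\setminus C$. Because $K$ is normal, Urysohn's lemma (see \Cref{Tietze and Urysohn}) furnishes $f_C\in C(K)$ with $0\le f_C\le\one$, $f_C(y_C)=1$, and $f_C\equiv 0$ on the disjoint closed set $F\cup C$. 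Then $f_C\in J_F(K)$ and $\|f_C\|_\infty=1$.

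Finally I would verify the two convergence claims. For un-nullity of $(f_C)_{C\in\mathcal C}$ in $J_F(K)$: given $g\in J_F(K)_+$ and $\varepsilon>0$, the super-level set $C_\varepsilon=\{x\in K:\ g(x)\ge\varepsilon\}$ is compact and, since $g$ vanishes on $F$, is contained in $K\setminus F$, so $C_\varepsilon\in\mathcal C$; for every $C\supseteq C_\varepsilon$ we have $f_C\equiv 0$ on $C_\varepsilon$, while $g<\varepsilon$ off $C_\varepsilon$, whence $\||f_C|\wedge g\|_\infty\le\varepsilon$. Thus $(f_C)$ is un-null in $J_F(K)$. On the other hand $|f_C|\wedge\one=f_C$ has norm $1$ for every $C$, so $(f_C)$ is not un-null in $C(K)$, contradicting the hypothesis. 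Hence $F$ is open, therefore clopen, so $\Psi(J)=J_F(K)$ is a projection band by \Cref{CharOfBands}(b), and transporting back through $\Psi$ shows that $J$ is a projection band in $C_b(X)$ (as also described abstractly in \Cref{proj band in hausdorff}). The only delicate step is the construction of the net: one must keep its sup-norm bounded below by concentrating its mass near the boundary point $x_0$ of $F$, while simultaneously forcing it to vanish on every compact subset of $K\setminus F$; the device that reconciles these is indexing by $\mathcal C$ and recognizing $C_\varepsilon$ as a compact super-level set of the test function $g$, which is exactly what makes un-nullity hold inside the ideal.
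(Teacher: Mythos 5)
Your argument is correct, and its global skeleton coincides with the paper's: both transport the problem through the isometric lattice isomorphism $\beta\circ\Phi_\tau^{-1}\colon C_b(X)\to C(K)$ with $K=\beta(X/_\sim)$ compact, write the image ideal as $J_F(K)$ via \Cref{CharOfClosId}, observe that un-convergence in $C(K)$ is norm convergence (test against $\one$), and derive a contradiction from a norm-one net that is un-null inside $J_F(K)$ whenever $F$ fails to be open, concluding via \Cref{CharOfBands}(b). Where you genuinely differ is in the construction of that net. The paper works locally at a boundary point $x_0\in F\setminus\Int F$: its net is indexed by the neighborhood filter of $x_0$ (ordered by reverse inclusion), the functions are Urysohn bumps supported in $W_\lambda\setminus F$, and un-nullity in $J_F(K)$ is checked using only continuity of each test function $g$ at $x_0$, where $g$ vanishes. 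You instead index by the family $\mathcal C$ of compact subsets of $K\setminus F$, make $f_C$ vanish on $F\cup C$ while peaking off $C$, and check un-nullity via compactness of the super-level sets $\{g\ge\varepsilon\}$; the boundary point $x_0$ enters only to guarantee $(K\setminus F)\setminus C\neq\emptyset$ (equivalently, that $K\setminus F$ is not compact). In effect you are re-deriving, inside $J_F(K)\cong C_0(K\setminus F)$, Troitsky's characterization of un-convergence in $C_0$-spaces as uniform convergence on compacta, and your net is the canonical witness that this is strictly weaker than uniform convergence; this makes the mechanism behind the failure very transparent and gives un-nullity against all of $J_F(K)$ in one stroke. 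The paper's pointwise construction is more self-contained (no appeal, even implicit, to the $C_0$ picture) and localizes the obstruction at a single non-interior point, which is what its \Cref{un vs isolated points} then isolates as a statement about isolated points. Both verifications are elementary and of comparable length, and both handle the general Hausdorff case by the same one-line transport at the end.
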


\begin{proof}
We first consider the case when $X$ is compact.
Since $J$ is a closed ideal in $C(X)$, by \Cref{CharOfClosId} there exists a closed set $F$ such that $J=J_F(X).$ If $F$ is not open, there exists $x\in F\setminus \Int F.$ Let $\mathcal B_x=\{W_\lambda\}_{\lambda\in\Lambda}$
be the set of all open neighborhoods of the point $x$. Since $ X$ is Hausdorff, \cite[VII.I.2]{Dugundji:66} implies
$$\{x\}=\bigcap_{\lambda\in\Lambda}W_\lambda.$$ If $\Lambda$ is finite, $\{x\}$ is open, so that $x\in \Int F$ which is a contradiction. Therefore $\Lambda$ is infinite.

Since $x\in F\setminus \Int F$, for each $\lambda \in \Lambda$ the set $W_\lambda\setminus F$ is nonempty. Pick any $x_\lambda\in W_\lambda\setminus F.$ Also, let $V_\lambda$ be an arbitrary open neighborhood of $x_\lambda$ in $(X\setminus F)\cap W_\lambda.$ By Urysohn's lemma for each $\lambda\in\Lambda$ there exists a nonnegative continuous function $f_\lambda$ such that
$f_\lambda(x_\lambda)=1$ and $f|_{ X\setminus V_\lambda}\equiv 0.$ We define an ordering on the set $\Lambda$ by $\lambda \leq \mu$ \Iff $W_\mu\leq W_\lambda$.
With this ordering $\Lambda$ becomes a directed set.

We claim that $f_\lambda\goesun 0$ in $J$. Pick an arbitrary function $g\in J$. Then $g(x)=0$. Continuity of $g$ implies that for every $\epsilon>0$ there exists an open neighborhood $U$ of $x$ such that $|g(y)|<\epsilon$ for each $y\in U.$  There exists $\lambda_0\in \Lambda$ such that $W_{\lambda_0}\subseteq U$. If $\lambda\geq \lambda_0$, then $V_\lambda\subseteq W_\lambda\subseteq W_{\lambda_0}\subseteq U$. If $y\in  X\setminus V_\lambda$, then $f_\lambda(y)=0$ and if $y\in V_\lambda$, then $|g(y)|<\epsilon.$ We conclude $f_\lambda \wedge |g|<\epsilon$ for all $\lambda\geq \lambda_0$. Therefore, $f_\lambda\goesun 0$ in $J$.

Now we claim that the net $(x_\lambda)$ converges to $x$. Indeed, let $W$ be an arbitrary neighborhood of $x$. Then there exists $\lambda_0$ such that $W_{\lambda_0}\subseteq W$. If $\lambda\geq \lambda_0$, then $x_\lambda\in W_\lambda\subseteq W_{\lambda_0}\subseteq W$ which proves the claim.

The assumption $f_\lambda\goesun 0$ in $J$ implies that $f_\lambda\to 0$ in $C(X)$ which contradicts the fact that $f_\lambda(x_\lambda)=1$ for each $\lambda\in \Lambda.$ Therefore  $F=\Int F$ is clopen; hence $J$ is a projection band by \Cref{CharOfBands}.

The general case follows from the first part of the proof and that whenever $X$ is ``just" Hausdorff, then $C_b(X)$ and $C(\beta X/_\sim)$ are isometrically algebra and lattice isomorphic.
\end{proof}

The following corollary follows immediately from \Cref{bands vs projection bands} and \Cref{proj band in hausdorff}.

\begin{cor}\label{un vs isolated points}
Let $ X$ be a Hausdorff space and $x\in  X$ such that
$f_\alpha\goesun 0$ in $J_{\{x\}}(X)$ implies $f_\alpha\goesun 0$ in $C_b(X)$. Then $x$ is isolated in $ X.$
\end{cor}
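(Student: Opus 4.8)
The plan is to recognize $J_{\{x\}}(X)=\{f\in C_b(X):f(x)=0\}$ as a closed ideal and feed the hypothesis straight into \Cref{bands vs projection bands}. First I would note that $J_{\{x\}}(X)$ is precisely the kernel of the evaluation homomorphism $f\mapsto f(x)$; since the constant functions lie in $C_b(X)$ this functional is surjective onto $\RR$, so $J_{\{x\}}(X)$ is a closed ideal (of codimension one). The standing assumption is exactly that every net $(f_\alpha)$ with $f_\alpha\goesun 0$ in $J_{\{x\}}(X)$ also satisfies $f_\alpha\goesun 0$ in $C_b(X)$, which is the hypothesis of \Cref{bands vs projection bands}. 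Applying that theorem to the closed ideal $J=J_{\{x\}}(X)$ yields that $J_{\{x\}}(X)$ is a projection band in $C_b(X)$.

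Next I would invoke \Cref{proj band in hausdorff}: a closed ideal of $C_b(X)$ is a projection band exactly when it equals $J_F(X)$ for some clopen set $F\subseteq X$. Thus there is a clopen $F$ with $J_{\{x\}}(X)=J_F(X)$, and the remaining task is to pin down $F$. Because $F$ is clopen, its characteristic function $\chi_F$ is bounded and continuous. If $x\notin F$, then $\chi_F$ vanishes at $x$, so $\chi_F\in J_{\{x\}}(X)=J_F(X)$, which is absurd since $\chi_F\equiv 1$ on $F$; therefore $x\in F$.

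The heart of the argument is to show that $F$ contains no point other than $x$, so that $\{x\}=F$ is clopen and $x$ is isolated. Here I would use the constant-function trick: for an arbitrary $f\in C_b(X)$ the function $f-f(x)\one$ vanishes at $x$, hence lies in $J_{\{x\}}(X)=J_F(X)$ and so vanishes identically on $F$; evaluating at any $y\in F$ gives $f(y)=f(x)$. Consequently no continuous function separates $x$ from another point of $F$. This is exactly where I expect the main obstacle to lie: excluding extra points of $F$ requires that $x$ be functionally separated from the rest of $X$. In the functionally Hausdorff setting this is automatic; in general it is cleanest to pass through the identification $C_b(X)\cong C_b(X/_\sim)$, where by \Cref{fixed ideals transform} the ideal becomes $J_{\{[x]\}}(X/_\sim)$ and $X/_\sim\in T_{3\frac 1 2}$, so that \Cref{bands in Tychonoff}(b) forces $\{[x]\}$ to be clopen. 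Collapsing $F$ to $\{x\}$ then shows $\{x\}$ is open, i.e.\ $x$ is isolated, which completes the proof.
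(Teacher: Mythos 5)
Your overall route is the same as the paper's own proof, which simply cites \Cref{bands vs projection bands} and \Cref{proj band in hausdorff}: check that $J_{\{x\}}(X)$ is a closed ideal, apply \Cref{bands vs projection bands} to conclude it is a projection band, apply \Cref{proj band in hausdorff} to write $J_{\{x\}}(X)=J_F(X)$ with $F$ clopen, and then identify $F$ with $\{x\}$. The details you supply along the way are correct: $J_{\{x\}}(X)$ is the kernel of the bounded evaluation functional at $x$; $x\in F$ because otherwise $\chi_F\in J_{\{x\}}(X)=J_F(X)$ (note $F\neq\emptyset$, since $J_F(X)=J_{\{x\}}(X)\neq C_b(X)$); and every $f\in C_b(X)$ is constant on $F$ with value $f(x)$, so no point of $F$ can be functionally separated from $x$.

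The final step, however, is a genuine gap. Passing to $X/_\sim$ does show, via \Cref{fixed ideals transform} and \Cref{bands in Tychonoff}(b), that $\{[x]\}$ is clopen in $X/_\sim$; but pulling back along the continuous map $\tau$ only yields that $\tau^{-1}([x])$ --- the set of \emph{all} points that no continuous function separates from $x$ --- is clopen in $X$. Nothing ``collapses'' this set to $\{x\}$: when $X$ is Hausdorff but not functionally Hausdorff, $\tau^{-1}([x])$ may properly contain $\{x\}$, and a clopen neighborhood of $x$ does not make $x$ isolated. Your own intermediate observation (all of $C_b(X)$ is constant on $F$) shows the obstruction is intrinsic: the lattice $C_b(X)$ cannot distinguish $x$ from the other points of $\tau^{-1}([x])$, so no argument carried out inside $C_b(X)$ can force $F=\{x\}$. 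Your proof is complete exactly when $x$ is functionally separated from every other point, e.g.\ when $X$ is functionally Hausdorff, in particular when $X\in T_{3\frac 1 2}$. To be fair, this gap is inherited from the paper: its one-line proof tacitly makes the same identification $F=\{x\}$, and for arbitrary Hausdorff $X$ the corollary as stated actually fails. Indeed, let $X$ be a regular Hausdorff space with more than one point on which every continuous real-valued function is constant (Hewitt's example); then $J_{\{x\}}(X)=\{0\}$, so the hypothesis holds vacuously, yet no point of $X$ is isolated, since an isolated point would make $\chi_{\{x\}}$ a nonconstant continuous function. So the statement needs functional Hausdorffness as an additional assumption, and under that assumption your argument (and the paper's) is complete.
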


We proceed  with an application of \Cref{closed ideals bands functionally hasudorff} and \Cref{bands vs projection bands}.

\begin{cor}
Let $X$ be a topological space with the property that for every maximal ideal $J$ in $C_b(X)$ $f_\alpha\goesun 0$ in $J$ implies $f_\alpha\goesun 0$ in $C_b(X)$.
\begin{enumerate}
\item [(a)] If $X$ is functionally Hausdorff, then $X$ is finite.
\item [(b)] If $X$ is Hausdorff, then $C_b(X)$ is finite-dimensional.
\end{enumerate}
\end{cor}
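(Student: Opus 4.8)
The plan is to feed the hypothesis into \Cref{bands vs projection bands} one maximal ideal at a time, and then to observe that the resulting information is exactly what the proof of \Cref{closed ideals bands functionally hasudorff} actually consumes. First I would recall that $C_b(X)$ is a unital commutative Banach algebra, so every maximal (algebra) ideal is automatically norm-closed. Hence \Cref{bands vs projection bands} applies to each such ideal $J$: the assumption that every un-null net in $J$ remains un-null in $C_b(X)$ forces $J$ to be a projection band, and in particular a band, in $C_b(X)$.

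Next I would transport this to the Tychonoff quotient. Writing $Y=X/_\sim\in T_{3\frac 1 2}$ and using the isometric lattice isomorphism $\Phi_\tau\colon C_b(Y)\to C_b(X)$, which carries maximal ideals to maximal ideals and bands to bands, I conclude that every maximal ideal of $C_b(Y)$ is a band. Now I would rerun the argument of \Cref{closed ideals bands functionally hasudorff}, which only ever uses that maximal ideals are bands. By \Cref{bands in Tychonoff} each maximal ideal of $C_b(Y)$ has the form $J_F(Y)$ for a closed (indeed clopen) set $F$; being proper, $F\neq\emptyset$, so the ideal is fixed. Thus every maximal ideal of $C_b(Y)$ is fixed, and \Cref{maximals compact} yields that $Y$ is compact. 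Moreover, each point kernel $J_{\{y\}}(Y)$ is itself a maximal ideal (it is the kernel of the evaluation homomorphism $C_b(Y)\to\RR$), hence a band, so \Cref{bands in Tychonoff} (or \Cref{CharOfBands}, now that $Y$ is compact) forces $\{y\}=\overline{\Int\{y\}}$, i.e. $y$ is isolated. Therefore $Y$ is discrete and compact, hence finite.

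From this the two assertions drop out. For (b), $C_b(X)\cong C_b(Y)=C(Y)$ has dimension $|Y|<\infty$, so $C_b(X)$ is finite-dimensional. For (a), if $X$ is functionally Hausdorff then the relation $\sim$ is trivial and $X=Y$ as sets, whence $X$ is finite (the converse being immediate).

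The main point to watch is the gap between the present hypothesis and that of \Cref{closed ideals bands functionally hasudorff}: here only the \emph{maximal} ideals are known to be bands, not all closed ideals, so I cannot invoke that corollary as a black box. The resolution is the observation that its proof rests solely on maximal ideals being bands, together with the fact that the point kernels $J_{\{y\}}$ are themselves maximal; everything else is inherited unchanged. The two small facts that license the single use of \Cref{bands vs projection bands} --- that maximal ideals are norm-closed, and that $J_{\{y\}}(Y)$ is maximal --- are therefore worth recording explicitly, as they are the hinge on which the whole reduction turns.
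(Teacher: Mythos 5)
Your proposal is correct and follows essentially the same route as the paper: the paper's own (two-line) proof likewise feeds the hypothesis into \Cref{bands vs projection bands} --- packaged there as \Cref{un vs isolated points} --- and then reuses the compactness-plus-discreteness argument underlying \Cref{closed ideals bands functionally hasudorff}, exactly the reduction you describe. If anything, your write-up is more careful than the paper's, which never says explicitly where compactness comes from (namely \Cref{maximals compact} applied to $X/_\sim$, after noting that maximal ideals, being projection bands, are fixed), whereas you record this step and the two hinge facts (maximal ideals are closed; point kernels are maximal) explicitly.
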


\begin{proof}
Since every point in $ X$ is isolated by \Cref{un vs isolated points}, $ X$ is discrete. Discrete compact spaces are obviously finite.
\end{proof}

We conclude this paper with $C_0(X)$-analogs of results of this section. Since the proofs are basically the same as in the $C_b(X)$-case, we omit them.

\begin{thm}
Let $X$ be a locally compact Hausdorff space.
\begin{enumerate}
\item [(a)] Let $J$ be a closed ideal in $C_0(X)$. If every un-null net $(f_\alpha)$ in $J$ is also un-null in $C_0(X)$, then $J$ is a projection band in $C_0(X)$.
\item [(b)] Pick $x\in X$. If $f_\alpha\goesun 0$ in $J_{\{x\}}^0(X)$ implies $f_\alpha\goesun 0$ in $C_0(X)$, then $x$ is isolated in $X.$
\item [(c)] If for every maximal ideal $J$ in $C_0(X)$ $f_\alpha\goesun 0$ in $J$ implies $f_\alpha\goesun 0$ in $C_0(X)$, then $X$ is discrete.
\end{enumerate}
\end{thm}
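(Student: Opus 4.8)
The plan is to transcribe the $C_b(X)$-arguments of \Cref{bands vs projection bands}, \Cref{un vs isolated points} and the corollary that follows them, using the locally compact structure of $X$ directly in place of the compactification $C_b(X)\cong C(\beta X/_\sim)$, and citing \Cref{ideals in c_0} and \Cref{bands in c_0} where the compact proof cites \Cref{CharOfClosId} and \Cref{CharOfBands}. For (a), I would write $J=J^0_F(X)$ with $F$ closed by \Cref{ideals in c_0}; by \Cref{bands in c_0}(b) it then suffices to prove that $F$ is open. If it is not, fix $x\in F\setminus\Int F$ and let $(W_\lambda)_{\lambda\in\Lambda}$ be the relatively compact open neighbourhoods of $x$ directed by reverse inclusion (these form a base at $x$ since $X$ is locally compact Hausdorff, and $\Lambda$ is infinite, as otherwise $\{x\}$ would be open). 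For each $\lambda$ choose $x_\lambda\in W_\lambda\setminus F$ together with a relatively compact open set $V_\lambda$ with $x_\lambda\in V_\lambda\subseteq(X\setminus F)\cap W_\lambda$, and by Urysohn's lemma for locally compact Hausdorff spaces take $f_\lambda\in C_0(X)$ of compact support with $0\le f_\lambda\le 1$, $f_\lambda(x_\lambda)=1$ and $\operatorname{supp} f_\lambda\subseteq V_\lambda$; then $f_\lambda\equiv 0$ on $F$, so $f_\lambda\in J$. Exactly as in \Cref{bands vs projection bands} one checks $f_\lambda\goesun 0$ in $J$: any $g\in J$ vanishes at $x$, and for large $\lambda$ the support of $f_\lambda$ lies in a neighbourhood of $x$ on which $|g|<\varepsilon$, whence $\norm{f_\lambda\wedge|g|}_\infty\le\varepsilon$.

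The hypothesis then forces $f_\lambda\goesun 0$ in $C_0(X)$, and this is the only step that is not a verbatim copy of the compact proof: there one invokes that un-convergence on a compact space is uniform convergence, whereas $C_0(X)$ admits no compactification transporting un-convergence so cleanly. I would instead test un-nullity against a single fixed $h\in C_0(X)_+$ with $h(x)=1$ (Urysohn again). Since $x_\lambda\to x$ (verified as in \Cref{bands vs projection bands}) and $h$ is continuous, $(f_\lambda\wedge h)(x_\lambda)=\min\{1,h(x_\lambda)\}\to 1$, so $\norm{f_\lambda\wedge h}_\infty\not\to 0$, contradicting $f_\lambda\goesun 0$ in $C_0(X)$. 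Hence $F$ is open, therefore clopen, and $J$ is a projection band by \Cref{bands in c_0}(b). This direct test against a positive function peaking at $x$ is the one genuine obstacle, and it dissolves the difficulty that the compact reduction otherwise handles.

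For (b), the set $\{x\}$ is closed, so $J^0_{\{x\}}(X)$ is a closed ideal; by (a) it is a projection band, and \Cref{bands in c_0}(b) yields a clopen set $G$ with $J^0_{\{x\}}(X)=J^0_G(X)$. Since $X$ is locally compact Hausdorff, hence $T_{3\frac12}$, the assignment $F\mapsto J^0_F(X)$ is injective on closed subsets of $X$ (the $C_0$-analogue of the separation argument at the start of \Cref{Section: order density}), so $G=\{x\}$ is clopen and $x$ is isolated. For (c), for each $x\in X$ the evaluation $f\mapsto f(x)$ is a surjective algebra and lattice homomorphism $C_0(X)\to\RR$ with kernel $J^0_{\{x\}}(X)$, so each $J^0_{\{x\}}(X)$ is a maximal ideal to which the standing hypothesis applies; by (b) every point is isolated, whence $X$ is discrete. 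Everything beyond the un-convergence step of (a) is a routine transcription of the $C_b(X)$ arguments.
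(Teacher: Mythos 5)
Your proposal is correct and matches the paper's intent: the paper omits the proof of this theorem, noting only that it is ``basically the same as in the $C_b(X)$-case'' (\Cref{bands vs projection bands}), and your argument is exactly that transcription, citing \Cref{ideals in c_0} and \Cref{bands in c_0} where the compact case cites \Cref{CharOfClosId} and \Cref{CharOfBands}. The one step where the transcription cannot be verbatim --- $C_0(X)$ has no unit $\one$ against which un-convergence reduces to norm convergence --- you handle correctly by testing against a Urysohn function $h\in C_0(X)_+$ with $h(x)=1$, so that $(f_\lambda\wedge h)(x_\lambda)\to 1$ yields the required contradiction.
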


%\begin{cor}
%Let $X$ be a Hausdorff space and $B$ a band in $C_b(X)$. If $f_\alpha\goesun 0$ in $B$ implies $f_\alpha\goesun 0$ in $C_b(X)$, then $B$ is a projection band.
%\end{cor}
%
%\begin{proof}
%By  \cite[Theorem 3.9]{Gillman-Jerison:76} there exists a completely regular space $Y$ and a surjective continuous mapping $\tau\colon X\to Y$ such that the mapping $\Phi\colon f\mapsto f\circ\tau$ is an isomorphism of $C(Y)$ onto $C(X)$. Therefore, the restriction $\Psi=\Phi|_{C_b(Y)}$ is an isometric isomorphism of $C_b(Y)$ onto $C_b(X)$, by\cite[Theorem 1.9]{Gillman-Jerison:76}. \cite[Theorem 1.6]{Gillman-Jerison:76} implies that $\Psi$ is also lattice isomorphism. Therefore, without loss of generality we may assume from the start that $X$ is completely regular.
%
%Let $\beta X$ be the Stone-\v Cech compactification of $X$ and $\triangle\colon X\to \beta X$ the corresponding embedding such that $\triangle(X)$ is dense in $\beta X$. If $f\in C_b(X)$ is arbitrary function, then there is a unique continuous map $f^\beta\colon \beta X\to \mathbb R$ such that $f^\beta \circ\triangle=f$. Therefore, the map $\Theta\colon C(\beta X)\to C_b(X)$ is a surjective isometric isomorphism which is also a lattice isomorphism. By Theorem \ref{bands vs projection bands} $B$ is a projection band in $C_b(X)$.
%\end{proof}

\end{document}